\DeclareMathOperator{\negg}{\mathrm{neg}}
\DeclareMathOperator{\oneg}{\mathrm{oneg}}
\DeclareMathOperator{\maj}{\mathrm{maj}}
\DeclareMathOperator{\fmaj}{\mathrm{fmaj}}
\DeclareMathOperator{\Dmaj}{\mathrm{Dmaj}}
\DeclareMathOperator{\omaj}{\mathrm{omaj}}
\DeclareMathOperator{\emaj}{\mathrm{emaj}}
\DeclareMathOperator{\odes}{\mathrm{odes}}
\DeclareMathOperator{\edes}{\mathrm{edes}}
\DeclareMathOperator{\des}{des}
\DeclareMathOperator{\ofmaj}{\mathrm{ofmaj}}
\DeclareMathOperator{\efmaj}{\mathrm{efmaj}}
\DeclareMathOperator{\eneg}{\mathrm{eneg}}
\DeclareMathOperator{\oDmaj}{\mathrm{oDmaj}}
\DeclareMathOperator{\eDmaj}{\mathrm{eDmaj}}
\DeclareMathOperator{\Neg}{\mathrm{Neg}}
\DeclareMathOperator{\modue}{(\mathrm{mod}~2)}
\theoremstyle{plain}
\newtheorem{thm}{Theorem}[section]
\newtheorem{pro}[thm]{Proposition}
\newtheorem{lem}[thm]{Lemma}
\newtheorem{cor}[thm]{Corollary}
\theoremstyle{definition}
\theoremstyle{remark}
\newcommand{\sm}{\sigma^{-1}}
\newcommand{\N}{\mathbb N}
\newcommand{\Z}{\mathbb Z}
\newcommand{\PP}{\mathbb P}
\newcommand{\eqdef}{:=}
\begin{document}

\begin{center}

{\Large \bf Odd and even major indices and one-dimensional characters for classical Weyl groups
\footnote{2010 Mathematics Subject
Classification: Primary 05A15; Secondary 05E15, 20F55.}}
 \vspace{0.8cm}

 {Francesco Brenti} \\

Dipartimento di Matematica \\

Universit\'{a} di Roma ``Tor Vergata''\\

Via della Ricerca Scientifica, 1 \\

00133 Roma, Italy \\

{\em brenti@mat.uniroma2.it } \\

 \vspace{0.5cm}

 Paolo Sentinelli  \\

%
%
%

{\em paolosentinelli@gmail.com } \\

\end{center}

\vspace{1cm}

\begin{abstract}
We define and study odd and even analogues of the major index statistics for the
classical Weyl groups. More precisely, we show that the generating functions
of these statistics, twisted by the one-dimensional characters of the corresponding
groups, always factor in an explicit way. In particular, we obtain odd and
even analogues of Carlitz's identity, of the Gessel-Simion Theorem, and a
parabolic extension, and refinement, of a result of Wachs.
\end{abstract}

\section{Introduction}
In recent years a new statistic on the symmetric group has been introduced and studied in relation
with vector spaces over finite fields equipped with a certain
quadratic form
(\cite{KV}). This statistic combines combinatorial and parity conditions and is now known as
the odd inversion number, or odd length (\cite{BC}, \cite{BC3}).
Analogous statistics have later been defined and studied for the hyperoctahedral and even
hyperoctahedral groups
(\cite{SV1}, \cite{SV2}, \cite{BC2}), and more recently for all Weyl groups (\cite{BC3}).
A crucial property of this new statistic is that its signed (by length) generating function over
the corresponding Weyl group always factors explicitly (\cite{BC3},\cite{Ste}).

Another line of research in the last 20 years has been the definition and study of analogues
of the major index statistic for the other classical Weyl groups, namely for the hyperoctahedral
and even hyperoctahedral groups (see, e.g., \cite{AR}, \cite{BiagAAM}, \cite{Biagioli-Caselli}, \cite{CF94}, \cite{CF95a}, \cite{CF95b}, \cite{Rei93a},
\cite{Rei93b}, \cite{Ste94}) and for finite Coxeter groups (\cite{Sentinelli}).
It is now generally recognized
that, among these, the ones with the best properties are those first defined by Adin and Roichman in \cite{AR} for the hyperoctahedral group and by Biagioli and Caselli in \cite{Biagioli-Caselli}
for the even hyperoctahedral group.

Our purpose in this work is to define odd (and even) analogues of these major index
statistics for the classical Weyl groups and show that their generating
function twisted by the one-dimensional characters of the corresponding Weyl group always factors in an explicit way.
More precisely, we show that certain multivariate refinements of these generating functions always factor explicitly.
As consequences of our results we obtain  odd and even analogues of Carlitz's identity \cite{Carlitz},
which involves overpartitions, of the Gessel-Simion Theorem
(see, e.g., \cite[Theorem 1.3]{AGR}), of several other results appearing in the literature (\cite[Theorems 5.1, 6.1, 6.2]{AGR} and \cite[Theorem 4.8]{Biagioli-2006}) and an extension, and refinement of a result of Wachs (\cite{Wachs}).

The organization of the paper is as follows. In the next section we recall some definitions
and results that are used in the sequel. In \S 3 we define and study odd and even analogues of the
major index and descent statistics of the symmetric group. In particular, we obtain odd and even
analogues of Carlitz's identity, of the Gessel-Simion Theorem, and a parabolic extension, and refinement, of a result of Wachs.
In \S 4 we define odd and even analogues of the major index statistics introduced in
\cite{AR} and \cite{Biagioli-Caselli} for the classical Weyl groups of types $B$ and $D$, respectively, and of the usual
descent statistics on these groups. More precisely, we compute a multivariate refinement of the generating
functions of these statistics twisted by the one-dimensional characters of the corresponding groups and show
that they always factor explicitly. Finally, in \S 5, we show that, under some mild
and natural hypotheses, there is no ``odd major index'' that is equidistributed with the odd
length in the symmetric or hyperoctahedral groups.

\section{Preliminaries}
In this section we recall some notation, definitions, and results that are
used in the sequel.
As $\mathbb{N}$ we denote the set of
non-negative integers and as $\mathbb{P}$ the set of positive integers. If $n\in \mathbb{N}$,
then
$[n]:=\{1,2,...,n\}$ and $[\pm n]:=\{-n,..., -1,1,,...,n\}$, in particular
$[0]=\varnothing$.
For $n\in \mathbb{P}$, in the polynomial ring $\mathbb{Z}[q]$
the $q$-analogue of $n$ is defined by $[n]_q:=\sum \limits_{i=0}^{n-1}q^i$ and the
$q$-factorial by
$[n]_q!:=\prod \limits_{i=1}^n[i]_q$. We also find it convenient to let
$ p_n := (1+(-1)^n)/2$.
The cardinality of a set $X$ is denoted by
$|X|$ and the power set of $X$ by $\mathcal{P}(X)$. For $n,k \in \N$ we let
$\binom{[n]}{k}:=\{A\in \mathcal{P}([n]):|A|=k\}$.

For $n\in \mathbb{N}$, $i \in \mathbb{Z}$, $q \in \mathbb{Q}$ and $J \subseteq [n]$ we let
$J_e := \{j\in J:j\equiv 0\modue\}$, $J_o := \{j\in J:j\equiv 1\modue\}$,
  $J+i := \{i+j:j\in J\} \cap [n]$ and
  $qJ := \{qj : j \in J\}$.

Next we recall some basic results in the theory of
Coxeter groups which
are useful in the sequel. The reader can consult  \cite{BB}
or \cite{Hum} for further details.
Let $(W,S)$ be a Coxeter system. The length of an element $z\in W$ with respect
to $S$ is denoted as
$\ell(z)$. If $J\subseteq S$ and $w \in W$ we let
$W^J:=\{w\in W:\ell(ws)>\ell(w)~\forall~s\in J\}$,
$D(w):=\{s\in S:\ell(ws)<\ell(w)\}$
and, more generally, for any $A\subseteq W$ we let $A^J:=A\cap W^J$.
When the group $W$ is finite, there exists a unique
element $w_0$ of maximal length.

For any $n\in \mathbb{P}$ let $S_n$ be the group of bijections of the set $[n]$.
For $\sigma, \tau \in S_n$ we let $\sigma \tau := \sigma \circ \tau$
(composition of functions). It is well known (see e.g. \cite{BB}) that
this is a Coxeter group with set of generators $\{s_1,s_2,...,s_{n-1}\}$, $s_i$
being, in one line notation,
$12...(i+1)i...n$. Given a permutation
$\sigma = \sigma(1)\sigma(2)...\sigma(n)\in S_n$, the action of
$s_i$ on the right is
given by $\sigma s_i = \sigma(1)\sigma(2)...\sigma(i+1)\sigma(i)...\sigma(n)$, for all $i\in [n-1]$.
As a Coxeter group, identifying $\{s_1,s_2,...,s_{n-1}\}$ with $[n-1]$, we have the following
well known result (see e.g. \cite{BB}).
\begin{pro}
\label{combA}
Let\/ $\sigma \in S_{n}$. Then
$
\ell(\sigma)=|\{(i,j) \in [n]^2 : i<j,\sigma(i)>\sigma(j)\}|,
$
and
$
D(\sigma)=\{i\in [n-1]:\sigma(i)>\sigma(i+1)\}.
$
\end{pro}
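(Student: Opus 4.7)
The plan is to prove both statements in Proposition \ref{combA} simultaneously by first establishing how the inversion number behaves under right multiplication by the simple reflections, and then deducing the length formula from this. Set $\inv(\sigma) := |\{(i,j) \in [n]^2 : i<j,\, \sigma(i)>\sigma(j)\}|$; the goal is to prove $\ell(\sigma) = \inv(\sigma)$ for every $\sigma \in S_{n}$, as this immediately yields both assertions.

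First I would record the effect of right multiplication by $s_{i}$ on inversions. Since $\sigma s_{i}$ and $\sigma$ agree in every position except that the values in positions $i$ and $i+1$ are swapped, the only pair whose inversion status can change is $(i,i+1)$ itself. Hence
\[
\inv(\sigma s_{i}) =
\begin{cases}
\inv(\sigma) + 1, & \text{if } \sigma(i) < \sigma(i+1),\\
\inv(\sigma) - 1, & \text{if } \sigma(i) > \sigma(i+1).
\end{cases}
\]
In particular $|\inv(\sigma s_{i}) - \inv(\sigma)| = 1$, so by induction on the length of any reduced expression $\sigma = s_{i_{1}} \cdots s_{i_{k}}$ one obtains $\inv(\sigma) \le k = \ell(\sigma)$.

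For the reverse inequality $\ell(\sigma) \le \inv(\sigma)$ I would argue by induction on $\inv(\sigma)$. The base case $\inv(\sigma) = 0$ forces $\sigma$ to be the identity, hence $\ell(\sigma) = 0$. If $\inv(\sigma) > 0$, then $\sigma$ is not the identity and must have some $i \in [n-1]$ with $\sigma(i) > \sigma(i+1)$; by the displayed formula, $\inv(\sigma s_{i}) = \inv(\sigma) - 1$, so by induction $\ell(\sigma s_{i}) \le \inv(\sigma) - 1$, and therefore $\ell(\sigma) \le \ell(\sigma s_{i}) + 1 \le \inv(\sigma)$. Combining this with the opposite inequality yields $\ell(\sigma) = \inv(\sigma)$, proving the first identity. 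The description of $D(\sigma)$ is then immediate: $i \in D(\sigma)$ means $\ell(\sigma s_{i}) < \ell(\sigma)$, which, by the established length formula together with the $\pm 1$ recursion above, is equivalent to $\sigma(i) > \sigma(i+1)$.

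The only thing to check carefully is the case analysis giving the displayed recursion, but this is direct from the one-line-notation description of right multiplication by $s_{i}$, so there is no genuine obstacle; the result is classical and can be found, for example, in \cite{BB} or \cite{Hum}.
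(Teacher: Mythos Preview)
Your argument is correct and is the standard proof of this classical fact. Note, however, that the paper does not actually prove Proposition~\ref{combA}: it is stated in the preliminaries as a well-known result with a reference to \cite{BB}, so there is no ``paper's own proof'' to compare against. Your write-up is a perfectly acceptable way to fill in that reference.
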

\noindent So, given $J \subseteq [n-1]$, $S^J_n=\{\sigma \in S_n:\sigma(i)<\sigma(i+1)~\forall ~i \in J\}$.

For $i \in [n]$ and $A \subseteq S_n$ define $A(i):=\{\sigma \in A: \sm(n)=i\}$. For $A \subseteq \Z$, $A = \{ a_1, \ldots , a_k \}_<$, and $\sigma \in S(A)$ we let
$\tau$ be the only element of $S_{|A|}$ defined by $\sigma(a_i) = a_{\tau(i)}$ for all
$i \in [k]$. We call $\tau$ the {\em flattening} of $\sigma$ and write $F(\sigma)=\tau$.
 Moreover, we define:
 $$i^*:=\left\{
     \begin{array}{ll}
       i-sgn(i), & \hbox{if $i\equiv 0 \modue $;} \\
       i+sgn(i), & \hbox{if $i\equiv 1 \modue $ and $i+sgn(i) \in [\pm n]$;}\\
       i, & \hbox{otherwise,}
     \end{array}
   \right.
 $$ for all $i\in [\pm n]$.

 The elements of $S_n^B$ are the bijective functions $\sigma : [\pm n] \rightarrow [\pm n]$ satisfying
 $-\sigma(i)=\sigma(-i)$, for all $i\in [n]$. We use the window notation. So, for example, the element
 $[-2,1]\in S^B_2$ represents the function $\sigma : [\pm 2] \rightarrow [\pm 2]$ such that
 $\sigma(1)=-2=-\sigma(-1)$ and $\sigma(2)=1=-\sigma(-2)$.
We let $\Neg(\sigma):=\{i\in [n]:\sigma(i)<0\}$, $\negg(\sigma)=|\Neg(\sigma)|$, $s^B_{j} \eqdef (j,j+1)(-j,-j-1)$ for $j=1,...,
n-1$, $s_{0} \eqdef (1,-1)$, and $S_{B} \eqdef \{ s_{0},s^B_1,...,s^B_{n-1} \}$.
It is well
known that $(S_{n}^{B},S_{B})$ is a Coxeter system of type $B_{n}$ and that, identifying $S_B$ with
$[0,n-1]$, the
following holds (see, e.g., \cite[\S 8.1]{BB}).
Given $\sigma \in S^{B}_{n}$ we let
$$
\ell_A(\sigma):=|\{(i,j)\in [n]^2:i<j,\sigma(i)>\sigma(j)\}|.
$$
\begin{pro}
\label{CombB}
Let $\sigma \in {S_{n}^{B}}$. Then
$\ell_B(\sigma)=\ell_A(\sigma)-\sum\limits_{i\in \Neg(\sigma)}\sigma(i)$, and
$D(\sigma):=\{i\in [0,n-1]:\sigma(i)>\sigma(i+1)\}$,
where $\sigma(0):=0$.
\end{pro}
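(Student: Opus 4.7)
Both assertions are classical (see, e.g., \cite[\S 8.1]{BB}); my plan is to argue them via the root system realization of type $B_n$. Recall the Coxeter-theoretic identity $\ell(\sigma) = |\{\alpha \in \Phi^+ : \sigma(\alpha) \in -\Phi^+\}|$, and that the positive roots of $B_n$ are $\{e_i : i \in [n]\} \cup \{e_j - e_i : 1 \le i < j \le n\} \cup \{e_j + e_i : 1 \le i < j \le n\}$, with $\sigma$ acting by $\sigma(e_i) = e_{\sigma(i)}$ under the convention $e_{-k} := -e_k$. This reduces the length formula to a combinatorial count.

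Partitioning by root type and performing the sign case-analysis on $\sigma(i), \sigma(j)$, one finds: the short root $e_i$ is inverted iff $\sigma(i) < 0$ (contributing $\negg(\sigma)$); the root $e_j - e_i$ with $i < j$ is inverted iff $\sigma(i) > \sigma(j)$ (contributing $\ell_A(\sigma)$); and the root $e_j + e_i$ with $i < j$ is inverted iff $\sigma(i) + \sigma(j) < 0$ (contributing a quantity $N(\sigma)$). Thus $\ell_B(\sigma) = \ell_A(\sigma) + \negg(\sigma) + N(\sigma)$, and the length formula follows once one verifies the identity $\negg(\sigma) + N(\sigma) = \sum_{i \in \Neg(\sigma)} |\sigma(i)|$. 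I would prove this by rewriting the right-hand side as $|\{(k,l) \in \Neg(\sigma) \times [n] : |\sigma(l)| \le |\sigma(k)|\}|$ (using that $l \mapsto |\sigma(l)|$ is a bijection of $[n]$), splitting on whether $l = k$ (the diagonal contributes $\negg(\sigma)$) or $l \ne k$, and matching the off-diagonal part with the pairs counted by $N(\sigma)$ via a direct sign analysis on the two signs of $\sigma(i), \sigma(j)$.

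For the descent set, $D(\sigma) = \{i : \ell_B(\sigma s_i) < \ell_B(\sigma)\}$ by the definition in \S 2, so it suffices to compute $\ell_B(\sigma s_i) - \ell_B(\sigma)$ using the length formula. For $s = s_i^B$ with $i \ge 1$, right multiplication swaps the values at positions $i$ and $i+1$, so $\ell_A$ changes by $\pm 1$ according as $\sigma(i) < \sigma(i+1)$ or $\sigma(i) > \sigma(i+1)$, while $\sum_{i \in \Neg(\sigma)} |\sigma(i)|$ is preserved (the multiset of values at negative positions is unchanged). For $s = s_0$, right multiplication negates $\sigma(1)$; a short count using that exactly $|\sigma(1)| - 1$ positions $l \in [2,n]$ have $|\sigma(l)| < |\sigma(1)|$ shows that $\ell_B(\sigma s_0) - \ell_B(\sigma) = \mathrm{sgn}(\sigma(1))$. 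With the convention $\sigma(0) := 0$, both cases unify into: $\ell_B$ strictly decreases iff $\sigma(i) > \sigma(i+1)$, which is the claimed combinatorial description of $D(\sigma)$.

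The main obstacle is the combinatorial identity $\negg(\sigma) + N(\sigma) = \sum_{i \in \Neg(\sigma)} |\sigma(i)|$, together with the careful bookkeeping needed in the $s_0$ case; everything else is routine once the root-system framework is in place.
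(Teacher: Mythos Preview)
The paper does not give its own proof of this proposition: it is stated in the preliminaries as a well-known fact with a citation to \cite[\S 8.1]{BB}, and no argument is supplied. Your root-system proof sketch is correct and is essentially the standard derivation one finds in that reference, so there is nothing to compare against here beyond noting that your write-up fills in what the paper deliberately omits.
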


We let $S^D_{n}$ be the subgroup of $S^B_{n}$ defined by
$
S^D_{n} \eqdef \{ \sigma \in S^B_{n} : \, |\Neg(\sigma)| \equiv 0 \pmod{2} \},
$
$\tilde{s}_{0} \eqdef (1,-2)(2,-1)$, and $S_D \eqdef \{ \tilde{s}_{0}
,s^B_{1},...,s^B_{n-1} \}$.
It is then well known that
$(S_{n}^{D},S_D)$ is a Coxeter system of type $D_{n}$, and that the
following holds (see, e.g., \cite[\S 8.2]{BB}).

\begin{pro}
\label{CombD}
Let $\sigma \in {S_{n}^{D}}$. Then
$\ell_D(\sigma)=\ell_B(\sigma)-\negg(\sigma)$, and
$ D(\sigma)= \{ i \in [0,n-1] : \; \sigma(i)>\sigma(i+1) \} $ ,
where $\sigma(0) \eqdef -\sigma(2)$.
\end{pro}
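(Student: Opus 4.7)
The plan is to handle the two assertions separately, exploiting the inclusion $S_n^D \hookrightarrow S_n^B$ and the explicit identity $\tilde{s}_0 = s_0 \, s_1^B \, s_0$ (a direct check on windows), which confirms that the group generated by $S_D$ is indeed the subgroup with an even number of negative entries.

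For the length formula I would use the reflection representation. The positive roots of $B_n$ are $\{e_i - e_j : i < j\} \cup \{e_i + e_j : i < j\} \cup \{e_i : i \in [n]\}$, while those of $D_n$ are the same list with the short roots $e_i$ removed. Since the length of a Weyl group element equals the number of positive roots it sends to negative ones, and since a signed permutation $\sigma$ sends $e_i$ to a negative root exactly when $i \in \Neg(\sigma)$, this immediately gives $\ell_B(\sigma) = \ell_D(\sigma) + \negg(\sigma)$ for every $\sigma \in S_n^D$. (Alternatively, one can avoid roots altogether and argue by induction on $\ell_B(\sigma)$, using that for each simple generator $s \in S_B$ the quantity $\ell_B - \negg$ changes by $\pm 1$ exactly when $s$ or a conjugate of it lies in $S_D$; but the root-theoretic argument is cleaner.)

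For the descent set I would check the $n$ simple reflections of $S_D$ one at a time. For $s_i^B$ with $i \geq 1$, right multiplication swaps the entries $\sigma(i)$ and $\sigma(i+1)$ in the window; combining Proposition~\ref{CombB} with the length formula just proved yields $s_i^B \in D(\sigma)$ iff $\sigma(i) > \sigma(i+1)$, exactly as in type $B$. For the remaining generator $\tilde{s}_0$, a direct window computation gives $\sigma \tilde{s}_0 = [-\sigma(2), -\sigma(1), \sigma(3), \ldots, \sigma(n)]$, and one checks case by case (on the signs of $\sigma(1), \sigma(2)$ and the sign of $\sigma(1) + \sigma(2)$) that $\ell_D(\sigma \tilde{s}_0) = \ell_D(\sigma) - 1$ precisely when $\sigma(1) + \sigma(2) < 0$. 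Rewriting this inequality as $-\sigma(2) > \sigma(1)$ matches the convention $\sigma(0) := -\sigma(2)$ and completes the characterization.

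The main technical obstacle is the bookkeeping for $\tilde{s}_0$: passing from $\sigma$ to $\sigma \tilde{s}_0$ changes both the $A$-inversions among the first two positions, the contribution $\sum_{i \in \Neg} \sigma(i)$ that appears in Proposition~\ref{CombB}, and the parity of the set $\Neg$; only after checking that these shifts combine into a single $\pm 1$ change in $\ell_D$, controlled exactly by the sign of $\sigma(1) + \sigma(2)$, does the claim drop out. All other cases are routine consequences of Proposition~\ref{CombB} and the length formula.
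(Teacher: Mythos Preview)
The paper does not actually prove this proposition: it is stated in the preliminaries as a well-known fact and referred to \cite[\S 8.2]{BB} without further argument. Your outline is a correct proof and matches the standard treatment found in that reference---the root-theoretic computation of $\ell_B-\ell_D$ via the short roots $e_i$ is exactly the clean way to see the length formula, and the case check for $\tilde s_0$ (equivalently, checking when $\sigma$ sends the simple root $e_1+e_2$ to a negative root) gives the condition $\sigma(1)+\sigma(2)<0$ as you say.
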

\noindent
For simplicity we often write $B_n$ and $D_n$ respectively in place of $S^B_n$ and $S^D_n$. We refer to \cite[Chapter 8]{BB} for further details on the combinatorics of the groups $S^B_n$ and $S^D_n$.

The \emph{descent number} and the \emph{major index} are the functions $\des : S_n \rightarrow \mathbb{N}$ and
$\maj : S_n \rightarrow \mathbb{N}$ defined respectively by $
\des(\sigma)\eqdef |D(\sigma)|$, and $\maj (\sigma) \eqdef \sum_{i \in D(\sigma)} i$, for all $\sigma \in S_n$.
More generally we let $\des(a)=|D(a)|$ and $\maj (a) \eqdef \sum_{i \in D(a)} i$ for any sequence
$a=(a_1,...,a_n) \in \mathbb{Z}^n$, where $D(a)=\{i\in [n-1]: a_i>a_{i+1}\}$.


Following \cite{AR} and \cite{Biagioli-Caselli} respectively we define the \emph{flag-major index} of an element $\sigma \in S^B_n$ by
$$
\fmaj(\sigma) := 2 \maj(\sigma) + \negg(\sigma),
$$ and the \emph{D-major index}
$$ \Dmaj(\sigma) :=  \fmaj(|\sigma|_n),$$
where $|\sigma|_n := [\sigma(1), \ldots , \sigma(n-1), |\sigma(n)|]$.

%

Recall that a one-dimensional character of a group $G$ is a homomorphism $\chi : G \rightarrow \mathbb{C}\setminus\{0\}$. The
one-dimensional characters of $S_n$ are well known to be the trivial and the alternating one, given by $\sigma \mapsto 1$
and $\sigma \mapsto (-1)^{\ell(\sigma)}$ respectively, for all $\sigma \in S_n$. For the group $S^B_n$ we have the following result (see \cite[Proposition 3.1]{Reiner}):
\begin{pro}
  The hyperoctahedral group $S_n^B$ has four one-dimensional characters, namely $\sigma \mapsto 1$, $\sigma \mapsto (-1)^{\ell(\sigma)}$,
  $\sigma \mapsto (-1)^{\negg(\sigma)}$ and $\sigma \mapsto (-1)^{\ell(\sigma)+\negg(\sigma)}$, for all $\sigma \in S_n^B$.
\end{pro}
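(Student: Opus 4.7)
The plan is to exploit the general fact that every one-dimensional character of a group $G$ factors through its abelianization $G/[G,G]$, so such characters are in bijection with homomorphisms $G^{ab}\to \mathbb{C}\setminus\{0\}$. Since $S_n^B$ is finite and generated by involutions, every such character in fact takes values in $\{\pm 1\}$, and the problem reduces to computing $G^{ab}$ for $G=S_n^B$. For this I would use the Coxeter presentation recalled in \S 2, with generators $S_B=\{s_0,s_1^B,\dots,s_{n-1}^B\}$ and braid labels $m(s_0,s_1^B)=4$, $m(s_i^B,s_{i+1}^B)=3$ for $i\ge 1$, and $2$ otherwise. In any abelian quotient the relations $s^2=1$ together with commutativity make each even braid relation automatic, while an odd braid relation $(s_is_j)^m=1$ collapses to $s_i=s_j$. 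Applied to the Coxeter graph of type $B_n$, this identifies all of $s_1^B,\dots,s_{n-1}^B$ with one another, while $s_0$ stays independent because its only incident edge carries the even label $4$. Hence for $n\ge 2$, $G^{ab}\cong(\Z/2\Z)^2$, so there are exactly four one-dimensional characters.

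It then remains to exhibit the four listed maps as characters and verify they are pairwise distinct. The trivial character and $\sigma\mapsto(-1)^{\ell(\sigma)}$ are standard for any Coxeter group. For $\sigma\mapsto(-1)^{\negg(\sigma)}$ I would use the identity $(-1)^{\negg(\sigma)}=\prod_{i=1}^n\mathrm{sgn}(\sigma(i))$ and check multiplicativity directly: the relation $\sigma(-j)=-\sigma(j)$ yields $\mathrm{sgn}(\sigma(\tau(i)))=\mathrm{sgn}(\tau(i))\,\mathrm{sgn}(\sigma(|\tau(i)|))$, and taking the product over $i\in[n]$ and re-indexing the second factor by $j=|\tau(i)|$ (a permutation of $[n]$) gives $(-1)^{\negg(\sigma\tau)}=(-1)^{\negg(\sigma)}(-1)^{\negg(\tau)}$. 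The fourth map is then a product of characters. Evaluating the four maps on the pair of abelianization generators $(s_0,s_1^B)$ produces the four distinct sign pairs $(1,1)$, $(-1,-1)$, $(-1,1)$, $(1,-1)$ in $\{\pm 1\}^2$, so they realize all four homomorphisms $G^{ab}\to\{\pm 1\}$.

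The main obstacle is this multiplicativity check for $(-1)^{\negg}$: since $\negg$ is not a Coxeter length, it does not come for free from the presentation and must be handled by hand as above, or equivalently by observing that $\sigma\mapsto|\sigma|$ is a group homomorphism $S_n^B\to S_n$ and combining the resulting character $\mathrm{sgn}(|\sigma|)$ with the Coxeter sign $(-1)^{\ell(\sigma)}$ via $(-1)^{\negg(\sigma)}=(-1)^{\ell(\sigma)}\,\mathrm{sgn}(|\sigma|)$. Once multiplicativity is settled, the counting argument $|G^{ab}|=4$ closes the proof.
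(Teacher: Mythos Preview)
The paper does not actually prove this proposition: it is stated in the preliminaries with an explicit citation to \cite[Proposition~3.1]{Reiner}, and no argument is given. So there is no ``paper's own proof'' to compare against.

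Your argument is correct and is the standard one: compute the abelianization of $S_n^B$ from its Coxeter presentation (the odd braid labels collapse $s_1^B,\dots,s_{n-1}^B$ to a single class, while the even label $4$ keeps $s_0$ independent), conclude that there are exactly four sign characters for $n\ge 2$, and then exhibit the four listed maps and separate them on $(s_0,s_1^B)$. Your direct verification that $(-1)^{\negg}$ is multiplicative, via $\mathrm{sgn}(\sigma(\tau(i)))=\mathrm{sgn}(\tau(i))\,\mathrm{sgn}(\sigma(|\tau(i)|))$ and the reindexing $j=|\tau(i)|$, is clean and self-contained. One small caution about your ``alternative'' route: the identity $(-1)^{\negg(\sigma)}=(-1)^{\ell(\sigma)}\,\mathrm{sgn}(|\sigma|)$ is true, but if you want to use it to \emph{deduce} that $(-1)^{\negg}$ is a character, you must establish that identity for all $\sigma$ by a direct combinatorial argument rather than by checking it on generators (which would presuppose what you are proving). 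Since your primary argument already handles multiplicativity directly, this does not affect the validity of your proof.
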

The group $S^D_n$ has only the trivial and the alternating one-dimensional characters $\sigma \mapsto 1$
and $\sigma \mapsto (-1)^{\ell(\sigma)}$, for all $\sigma \in S^D_n$ (\cite[Proposition 4.1]{Reiner}).

\section{Type $A$}

In this section we introduce and study odd and even analogues of the descent and major index statistics
for the symmetric group. In particular, we obtain odd and even analogues of Carlitz's identity, of the Gessel-Simion Theorem,
and a parabolic extension, and refinement, of a result of Wachs.

We define functions $\odes,\edes,\omaj,\emaj : S_n \rightarrow \N$ by letting
\[
\odes(\sigma)\eqdef |D(\sigma)_o|, \;\;\; \edes(\sigma)\eqdef |D(\sigma)_e|
\] and
\[
\omaj (\sigma) \eqdef \sum_{i \in D(\sigma)_o} \frac{i+1}{2}, \;\;\;
\emaj (\sigma) \eqdef \sum_{i \in D(\sigma)_e} \frac{i}{2},
\]
for all $\sigma \in S_n$. We call these functions \emph{odd descent number}, \emph{even descent number},
\emph{odd major index}, and \emph{even major index} respectively. So for example,
if $\sigma=81725634$ then $\odes(\sigma)=2$,  $\edes(\sigma)=1$,
$\omaj(\sigma)=3$, and $\emaj(\sigma)=3$.

For any $J\subseteq [n-1]$ we let the \emph{parabolic} $q$-\emph{Eulerian polynomials}
and \emph{parabolic} \emph{signed $q$-Eulerian polynomials}, respectively, be
\[
A^J_n(q,x) := \sum \limits_{\sigma \in S^J_n} q^{\maj(\sigma)} x^{\des(\sigma)}, \\
\]
and
\[
B^J_n(q,x) := \sum \limits_{\sigma \in S^J_n} (-1)^{\ell(\sigma)}q^{\maj(\sigma)} x^{\des(\sigma)}.
\]
So $A^{\varnothing}_n(q,x)=A_n(q,x)$ and $B^{\varnothing}_n(q,x)=B_n(q,x)$ where $A_n(q,x)$ and
$B_n(q,x)$ are, respectively, the $q$-Eulerian polynomials and the signed $q$-Eulerian polynomials, as defined in \cite{Wachs}.

Our goal is to compute the generating functions of odes, omaj, and of edes, emaj, twisted by the one-dimensional
characters of the symmetric groups. Our first result is the odd and even analogue (for $y=-1$ and $x=1$) of the
Gessel-Simion Theorem (see, e.g. \cite[Theorem 1.3]{AGR}) .
\begin{thm}
\label{odd eulerian}
Let $n\in \mathbb{P}$. Then
$$
  \sum \limits_{\sigma \in S_n} y^{\ell(\sigma)} q^{\omaj(\sigma)} x^{\odes(\sigma)}
  =[n]_y !\prod\limits_{i=1}^{\left\lfloor\frac{n}{2}\right\rfloor}\frac{(1+yxq^{i})}{(1+y)},
$$
and
$$
\sum \limits_{\sigma \in S_n} y^{\ell(\sigma)} q^{\emaj(\sigma)} x^{\edes(\sigma)}=
[n]_y ! \prod\limits_{i=1}^{\left\lfloor\frac{n-1}{2}\right\rfloor}\frac{(1+yxq^{i})}{(1+y)}.
$$
\end{thm}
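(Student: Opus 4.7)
The plan is to prove both identities by a parabolic decomposition argument. For the first identity I would take $J := \{1,3,5,\ldots\} \cap [n-1]$, and for the second, $J := \{2,4,6,\ldots\} \cap [n-1]$. In each case the generators $\{s_j : j \in J\}$ pairwise commute (their indices differ by at least $2$), so the parabolic subgroup $W_J := \langle s_j : j \in J\rangle$ is a Boolean group isomorphic to $(S_2)^{|J|}$, with $|J| = \lfloor n/2 \rfloor$ or $\lfloor (n-1)/2 \rfloor$ respectively---precisely the exponents appearing in the RHS product. Moreover $D(w) \subseteq J$ for every $w \in W_J$, and $D(\sigma) \cap J = D(\sigma)_o$ (resp.\ $D(\sigma)_e$) for the two choices of $J$.

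Next I would invoke the standard parabolic factorization: every $\sigma \in S_n$ writes uniquely as $\sigma = u \cdot w$ with $u \in S_n^J$, $w \in W_J$, and $\ell(\sigma) = \ell(u) + \ell(w)$. The crux of the argument is the lemma: in this decomposition, $D(\sigma) \cap J = D(w)$. The proof I have in mind is short: for each $j \in J$, $w$ stabilizes the pair $\{j, j+1\}$ setwise (because the $s_{j'}$ for $j' \in J$ all commute), so $(\sigma(j), \sigma(j+1))$ is either $(u(j), u(j+1))$ or its reverse; since $u \in S_n^J$ forces $u(j) < u(j+1)$ for all $j \in J$, the descent at position $j$ in $\sigma$ occurs iff $s_j$ appears in $w$, iff $j \in D(w)$.

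The lemma yields $\omaj(\sigma) = \omaj(w)$ and $\odes(\sigma) = \ell(w)$, and analogously $\emaj(\sigma) = \emaj(w)$, $\edes(\sigma) = \ell(w)$ in the even case. Hence the generating function factors as
\[
\sum_{\sigma \in S_n} y^{\ell(\sigma)} q^{\omaj(\sigma)} x^{\odes(\sigma)} = \left(\sum_{u \in S_n^J} y^{\ell(u)}\right)\left(\sum_{w \in W_J} y^{\ell(w)} q^{\omaj(w)} x^{\ell(w)}\right).
\]
The first factor equals $[n]_y!/(1+y)^{|J|}$ from $[n]_y! = \bigl(\sum_{u \in S_n^J} y^{\ell(u)}\bigr)(1+y)^{|J|}$, which follows from length-additivity together with $\sum_{w \in W_J} y^{\ell(w)} = (1+y)^{|J|}$. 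The second factor is the explicit product $\prod_{j \in J} (1 + yxq^{(j+1)/2})$, which reindexes to $\prod_{i=1}^{\lfloor n/2\rfloor}(1 + yxq^i)$ for $J$ the odd integers in $[n-1]$. An identical argument with $j/2$ in place of $(j+1)/2$ and $J$ the even integers in $[n-1]$ handles the second identity.

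The only real obstacle is the clean verification of the key lemma relating $D(\sigma) \cap J$ to $D(w)$; once that is in hand, the remainder reduces to routine Coxeter-theoretic bookkeeping.
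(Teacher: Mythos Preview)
Your proof is correct and takes a genuinely different route from the paper's. The paper proceeds by induction on $n$: for $n$ odd it conditions on the value $\sigma(n)$ and observes that the last position does not affect the odd statistics, yielding the factor $[n]_y$; for $n$ even it conditions on the pair $(\sigma(n-1),\sigma(n))$, which contributes the factor $(1+yxq^{n/2})[n]_y[n-1]_y/[2]_y$ and reduces to $S_{n-2}$. Your argument instead exploits the parabolic decomposition $S_n = S_n^J \cdot W_J$ for the ``matching'' subset $J$ of odd (resp.\ even) positions, together with the key observation that the odd (resp.\ even) descent set of $\sigma=uw$ is entirely determined by $w$. This makes the product structure of the right-hand side transparent: the factor $[n]_y!/(1+y)^{|J|}$ is the Poincar\'e polynomial of the quotient $S_n^J$, and the product $\prod_i(1+yxq^i)$ is the generating function over the Boolean group $W_J$. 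The paper's induction is more elementary and self-contained, while your approach is more structural, explains \emph{why} the formula factors, and would generalise immediately to any commuting subset of simple reflections (and indeed to other Coxeter types). Both are valid; yours is arguably cleaner.
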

\begin{proof}
  Let, for brevity,
$A^o_n(y,q,x) := \sum \limits_{\sigma \in S_n} y^{\ell(\sigma)} q^{\omaj(\sigma)} x^{\odes(\sigma)}$.
We prove the first equation by induction on $n\geqslant 1$. We have that $A^o_1(y,q,x)=1$, and $A^o_2(y,q,x)=1+yxq$.
So let $n \geq 3$. For $n\equiv 1 \pmod{2}$ we find, by our inductive hypothesis,
\begin{eqnarray*}
 A^o_n(y,q,x)  &=&
\sum \limits_{i\in [n]} \sum \limits_{\substack{u\in S_n\\u(n)=i}} y^{\ell(\sigma)} q^{\omaj(u)}x^{\odes(u)} = \sum \limits_{i\in [n]} y^{n-i} A^o_{n-1}(y,q,x)\\
&=&[n]_y A^o_{n-1}(y,q,x)=[n]_y !\prod\limits_{i=1}^{\left\lfloor\frac{n-1}{2}\right\rfloor}\frac{(1+yxq^{i})}{(1+y)},
\end{eqnarray*}
as desired.
For $n\equiv 0 \pmod{2}$ we have, by our inductive hypothesis,
\begin{eqnarray*}
 A^o_n(y,q,x)
 &=& \sum_{1 \leq i<j \leq n}
 \left( \sum \limits_{\substack{\sigma \in S_n\\\sigma(n-1)=i,\, \sigma(n)=j}} y^{\ell(\sigma)} q^{\omaj(\sigma)}x^{\odes(\sigma)} + \sum \limits_{\substack{\sigma \in S_n\\\sigma(n-1)=j,\, \sigma(n)=i}} y^{\ell(\sigma)} q^{\omaj(\sigma)}x^{\odes(\sigma)} \right) \\
   &=& \sum_{1 \leq i<j \leq n}
  \sum \limits_{\tau \in S_{n-2}} y^{2n-j-i-1+\ell(\tau)} q^{\omaj(\tau)}x^{\odes(\tau)}  \\
 & & + \sum_{1 \leq i<j \leq n} \sum \limits_{\tau \in S_{n-2}} y^{2n-j-i+\ell(\tau)} q^{\omaj(\tau)+\frac{n}{2}}x^{\odes(\tau)+1} \\
   &=& \sum_{1 \leq i<j \leq n} (y^{2n-i-j-1}+ y^{2n-i-j} q^{\frac{n}{2}}x) A^o_{n-2}(y,q,x) \\
   &=& A^o_{n-2}(y,q,x) (1+yq^{\frac{n}{2}}x) \sum_{1 \leq i<j \leq n} y^{2n-i-j-1} \\
   &=& A^o_{n-2}(y,q,x) (1+yq^{\frac{n}{2}}x) \sum_{i=1}^{n-1} y^{n-i-1} [n-i]_y \\
   &=& A^o_{n-2}(y,q,x) (1+yq^{\frac{n}{2}}x) \frac{[n]_y [n-1]_y}{[2]_y},
\end{eqnarray*}
as desired.

The proof for the even statistics is analogous, and is therefore omitted.
\end{proof}

As a corollary of Theorem \ref{odd eulerian} we obtain the odd-even analogue of Carlitz's identity \cite{Carlitz}.
Recall that an overpartition is a partition  where the last occurrence of any number may be overlined (we refer the reader to \cite{Corteel Lovejoy}).
So for example $(1,1,1)$, $(1,1,\overline{1})$, $(2,1)$, $(\overline{2},1)$, $(2,\overline{1})$ and $(\overline{2},\overline{1})$ are the overpartitions of $3$.  
We denote by $\overline{{\cal P}}$ the set of overpartitions.

\begin{cor} \label{corollario over}
  Let $n\in \mathbb{P}$. Then
$$
 \frac{ \sum \limits_{\sigma \in S_n} q^{\omaj(\sigma)} x^{\odes(\sigma)}}{\prod\limits_{i=1}^{\left\lfloor\frac{n}{2}\right\rfloor}(1-xq^i)}
  =\frac{n!}{2^{\left\lfloor\frac{n}{2}\right\rfloor}}\sum\limits_{\{\lambda \in \overline{{\cal P}}: \lambda_1\leqslant \left\lfloor\frac{n}{2}\right\rfloor\}}
  q^{|\lambda|}x^{\ell(\lambda)},
$$
and
$$
 \frac{ \sum \limits_{\sigma \in S_n} q^{\emaj(\sigma)} x^{\edes(\sigma)}}{\prod\limits_{i=1}^{\left\lfloor\frac{n-1}{2}\right\rfloor}(1-xq^i)}
  =\frac{n!}{2^{\left\lfloor\frac{n-1}{2}\right\rfloor}}\sum\limits_{\{\lambda \in \overline{{\cal P}}: \lambda_1\leqslant \left\lfloor\frac{n-1}{2}\right\rfloor\}}
  q^{|\lambda|}x^{\ell(\lambda)}.
$$
\end{cor}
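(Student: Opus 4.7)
The plan is to specialize Theorem \ref{odd eulerian} at $y=1$ and then recognize the resulting infinite product as the classical generating function for overpartitions with bounded largest part.

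First, I would set $y=1$ in the first identity of Theorem \ref{odd eulerian}. Since $[n]_1! = n!$ and $(1+y)|_{y=1}=2$, we immediately obtain
\[
\sum_{\sigma \in S_n} q^{\omaj(\sigma)} x^{\odes(\sigma)} = \frac{n!}{2^{\lfloor n/2 \rfloor}} \prod_{i=1}^{\lfloor n/2 \rfloor} (1+xq^i).
\]
Dividing both sides by $\prod_{i=1}^{\lfloor n/2 \rfloor}(1-xq^i)$ reduces the first identity of the corollary to the claim
\[
\prod_{i=1}^{m} \frac{1+xq^i}{1-xq^i} = \sum_{\substack{\lambda \in \overline{\cal P}\\ \lambda_1 \leq m}} q^{|\lambda|} x^{\ell(\lambda)},
\]
for every $m \in \mathbb{N}$ (taking $m=\lfloor n/2 \rfloor$).

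The second step is to verify this overpartition identity by a standard part-by-part expansion. For each $i\in[m]$, classify overpartitions by the multiplicity $k_i$ of $i$: if $k_i = 0$ the part contributes $1$; if $k_i = k \geq 1$ then either no copy of $i$ is overlined or exactly the last one is, and in both cases the contribution to $q^{|\lambda|}x^{\ell(\lambda)}$ is $q^{ik}x^k$. Hence the total weight contributed by the part size $i$ is
\[
1 + 2\sum_{k\geq 1} x^k q^{ik} = 1 + \frac{2xq^i}{1-xq^i} = \frac{1+xq^i}{1-xq^i},
\]
and since the multiplicities of distinct part sizes are independent, multiplying over $i=1,\ldots,m$ yields the desired product formula.

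The second identity of the corollary is proved by the exact same argument, using the second identity of Theorem \ref{odd eulerian} at $y=1$, which produces the product up to $\lfloor (n-1)/2 \rfloor$ in place of $\lfloor n/2 \rfloor$. There is no real obstacle here: once Theorem \ref{odd eulerian} is in hand, the corollary is a direct specialization together with the well-known overpartition generating function, and the only mild bookkeeping is the factor $2^{\lfloor n/2 \rfloor}$ (resp.\ $2^{\lfloor (n-1)/2 \rfloor}$) coming from the $(1+y)$ denominators at $y=1$.
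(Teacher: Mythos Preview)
Your proof is correct and follows essentially the same approach as the paper: specialize Theorem \ref{odd eulerian} at $y=1$, divide by $\prod_i(1-xq^i)$, and identify the resulting product $\prod_i \frac{1+xq^i}{1-xq^i}$ with the overpartition generating function. The paper simply asserts that this last identification ``follows immediately,'' whereas you spell it out with the part-by-part expansion; otherwise the arguments coincide.
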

\begin{proof}
  By Theorem \ref{odd eulerian} we have that $$ \frac{ \sum \limits_{\sigma \in S_n} q^{\omaj(\sigma)} x^{\odes(\sigma)}}{\prod\limits_{i=1}^{\left\lfloor\frac{n}{2}\right\rfloor}(1-xq^i)}
  =\frac{n!}{2^{\left\lfloor\frac{n}{2}\right\rfloor}}\frac{\prod\limits_{i=1}^{\left\lfloor\frac{n}{2}\right\rfloor}(1+xq^i)}
  {\prod\limits_{i=1}^{\left\lfloor\frac{n}{2}\right\rfloor}(1-xq^i)}$$ and the result follows immediately. The proof of the second equation is identical.
\end{proof}

Note that Corollary \ref{corollario over} can also be stated in terms of super-Schur functions.
Given a partition $\lambda$ and variables $x_1,...,x_m,y_1,...,y_n$ we denote by $s_{\lambda}(x_1,...,x_m/y_1,...,y_n)$ the super-Schur function (also known as hook Schur function, see \cite{BereleRegev})
associated to $\lambda$ (we refer the reader to \cite{Sta-Liesuper} for the definition and further information about super-Schur functions).
Then we have, by \cite[Equation (6)]{Brenti-superSchur},
$$
 \frac{ \sum \limits_{\sigma \in S_n} q^{\omaj(\sigma)} x^{\odes(\sigma)}}{\prod\limits_{i=1}^{\left\lfloor\frac{n}{2}\right\rfloor}(1-xq^i)}
  =\frac{n!}{2^{\left\lfloor\frac{n}{2}\right\rfloor}}\sum\limits_{k \geqslant 0}s_{(k)}(q,...,q^{\left\lfloor\frac{n}{2}\right\rfloor}/q,...,q^{\left\lfloor\frac{n}{2}\right\rfloor})x^k
$$
and
$$
 \frac{ \sum \limits_{\sigma \in S_n} q^{\emaj(\sigma)} x^{\edes(\sigma)}}{\prod\limits_{i=1}^{\left\lfloor\frac{n-1}{2}\right\rfloor}(1-xq^i)}
  =\frac{n!}{2^{\left\lfloor\frac{n-1}{2}\right\rfloor}}\sum\limits_{k \geqslant 0}s_{(k)}(q,...,q^{\left\lfloor\frac{n-1}{2}\right\rfloor}/q,...,q^{\left\lfloor\frac{n-1}{2}\right\rfloor})x^k.
$$

A second corollary of Theorem \ref{odd eulerian} is the odd-even analogue of the Gessel-Simion Theorem.
Recall our definition of $p_n$ from \S 2.
\begin{cor}
Let $n\in \mathbb{P}$. Then
$$
  \sum \limits_{\sigma \in S_n} (-1)^{\ell(\sigma)} q^{\omaj(\sigma)}
  =\left\lfloor\frac{n}{2}\right\rfloor!
  \prod\limits_{i=1}^{\left\lfloor\frac{n}{2}\right\rfloor} (1-q^{i}),
$$
and
$$
\sum \limits_{\sigma \in S_n} (-1)^{\ell(\sigma)} q^{\emaj(\sigma)} =
p_{n+1}\left\lfloor\frac{n}{2}\right\rfloor ! \prod\limits_{i=1}^{\left\lfloor\frac{n-1}{2}\right\rfloor} (1-q^{i}).
$$
\end{cor}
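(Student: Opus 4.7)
The plan is to deduce both identities from Theorem~\ref{odd eulerian} by specializing $x=1$ and taking the limit $y\to -1$. At $y=-1$ the factor $[n]_y!$ vanishes to order $\lfloor n/2\rfloor$ (one simple zero from each $[2k]_y$ with $2k\le n$), while the denominators contribute $(1+y)^{\lfloor n/2\rfloor}$ in the odd case and $(1+y)^{\lfloor (n-1)/2\rfloor}$ in the even case. Both corollaries will therefore emerge from a $0/0$ limit that can be evaluated cleanly.

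The key preliminary computation is the factorization: for even $i$, $[i]_y=(1+y)(1+y^2+y^4+\cdots+y^{i-2})$, so $\lim_{y\to -1}[i]_y/(1+y)=i/2$; and for odd $i$, $[i]_{-1}=1$. Combining these,
$$
\lim_{y\to -1}\frac{[n]_y!}{(1+y)^{\lfloor n/2\rfloor}}=\prod_{k=1}^{\lfloor n/2\rfloor}k=\lfloor n/2\rfloor!.
$$

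For the odd major index identity, I set $x=1$ in the first formula of Theorem~\ref{odd eulerian} and regroup the right-hand side as
$$
\frac{[n]_y!}{(1+y)^{\lfloor n/2\rfloor}}\cdot \prod_{i=1}^{\lfloor n/2\rfloor}(1+yq^{i}).
$$
Letting $y\to -1$ and using the limit above produces $\lfloor n/2\rfloor!\prod_{i=1}^{\lfloor n/2\rfloor}(1-q^{i})$, which is the desired formula.

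For the even major index identity, the analogous specialization in the second formula of Theorem~\ref{odd eulerian} yields
$$
\frac{[n]_y!}{(1+y)^{\lfloor (n-1)/2\rfloor}}\cdot \prod_{i=1}^{\lfloor (n-1)/2\rfloor}(1+yq^{i}).
$$
Here I split on the parity of $n$: when $n$ is odd, $\lfloor n/2\rfloor=\lfloor (n-1)/2\rfloor$, so the exponents match and the limit equals $\lfloor n/2\rfloor!\prod_{i=1}^{\lfloor (n-1)/2\rfloor}(1-q^{i})$; when $n$ is even, there is exactly one excess factor of $(1+y)$ surviving in the numerator, forcing the limit to vanish. Since $p_{n+1}=1$ for $n$ odd and $p_{n+1}=0$ for $n$ even, both parities collapse into the stated formula. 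The only real obstacle is the careful bookkeeping of orders of vanishing in the $y\to -1$ limit; there is no further combinatorial content beyond Theorem~\ref{odd eulerian}.
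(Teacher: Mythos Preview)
Your proof is correct and is exactly the intended derivation: the paper states this as a corollary of Theorem~\ref{odd eulerian} without proof, and the specialization $x=1$, $y\to -1$ (equivalently, evaluation at $y=-1$ after noting both sides are polynomials in $y$) is the natural way to obtain it. Your bookkeeping of the orders of vanishing of $[n]_y!$ at $y=-1$ and the parity split in the even case are accurate.
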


A further corollary is the following.
\begin{cor}
Let $n\in \mathbb{P}$. Then
$\sum \limits_{\sigma \in S_n} q^{\omaj(\sigma)}$ and
$\sum \limits_{\sigma \in S_n} q^{\emaj(\sigma)}$ are symmetric unimodal polynomials.
\end{cor}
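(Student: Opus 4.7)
The plan is to reduce the statement directly to Theorem \ref{odd eulerian} specialized at $y=1$, $x=1$. Since $[n]_y! = n!$ at $y=1$, and $\frac{1+yxq^i}{1+y}$ collapses to $\frac{1+q^i}{2}$, the theorem yields
$$
\sum_{\sigma \in S_n} q^{\omaj(\sigma)} = \frac{n!}{2^{\lfloor n/2 \rfloor}} \prod_{i=1}^{\lfloor n/2 \rfloor}(1+q^i),
\qquad
\sum_{\sigma \in S_n} q^{\emaj(\sigma)} = \frac{n!}{2^{\lfloor (n-1)/2 \rfloor}} \prod_{i=1}^{\lfloor (n-1)/2 \rfloor}(1+q^i).
$$
Both left-hand sides are visibly polynomials in $q$ with nonnegative integer coefficients, so the scalar prefactors are positive constants that affect neither symmetry nor unimodality; it suffices to analyze the products on the right.

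Each factor $1+q^i$ is a symmetric unimodal polynomial with nonnegative coefficients, with center of symmetry $i/2$. I would then invoke the classical fact (see, e.g., Stanley's survey on log-concavity and unimodality in algebra, combinatorics, and geometry) that the product of two symmetric unimodal polynomials with nonnegative real coefficients is again symmetric and unimodal, with center of symmetry equal to the sum of the centers. Iterating this closure property over $i=1,\ldots,\lfloor n/2\rfloor$ (respectively $\lfloor (n-1)/2\rfloor$) gives that $\prod_i(1+q^i)$ is symmetric unimodal, and multiplying by the positive scalar preserves both properties. The common center of symmetry is then $\frac{1}{2}\binom{\lfloor n/2\rfloor +1}{2}$ in the odd case, and analogously in the even case.

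There is no real obstacle here; the only point worth verifying carefully is that the theorem's right-hand side can indeed be evaluated at $y=1$ (the apparent $0/0$ in $(1+yxq^i)/(1+y)$ is not a singularity after the substitution $x=1$, where the numerator equals $1+q^i$ and the denominator equals $2$). Everything else is a direct appeal to Theorem \ref{odd eulerian} and the closure of the class of symmetric unimodal polynomials under products.
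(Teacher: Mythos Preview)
Your proof is correct and follows essentially the same route as the paper: specialize Theorem~\ref{odd eulerian} at $y=x=1$ to obtain $\sum_{\sigma\in S_n}q^{\omaj(\sigma)}=\frac{n!}{2^{\lfloor n/2\rfloor}}\prod_{i=1}^{\lfloor n/2\rfloor}(1+q^i)$ (and the analogous even formula), then invoke Stanley's survey for the unimodality of the product. The only cosmetic difference is that the paper cites directly the known fact that $\prod_{i=1}^k(1+q^i)$ is unimodal, whereas you deduce it from the closure of symmetric unimodal polynomials under products---but this is the same reference and the same idea.
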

\begin{proof}
It follows immediately from Theorem \ref{odd eulerian} that
$
  \sum \limits_{\sigma \in S_n} q^{\omaj(\sigma)}
  =\frac{n !}{2^{\lfloor\frac{n}{2}\rfloor}}
  \prod_{i=1}^{\left\lfloor\frac{n}{2}\right\rfloor} (1+q^{i}),
$
and that
$
\sum \limits_{\sigma \in S_n} q^{\emaj(\sigma)} =
\frac{n !}{2^{\lfloor\frac{n-1}{2}\rfloor}}
\prod_{i=1}^{\left\lfloor\frac{n-1}{2}\right\rfloor} (1+q^{i}).
$
But it is well known (see, e.g., \cite{Sta-Uni}) that the polynomial $\prod\limits_{i=1}^{k} (1+q^{i})$
is unimodal for all $k \geq 1$.
\end{proof}

Another unimodality result that arises from Corollary \ref{corollario over} is the following.

\begin{pro} \label{unimod over}
  Let $n,m \geqslant 1$. Then the polynomial $\sum\limits_{\{\lambda \in \overline{{\cal P}}: \lambda_1\leqslant n, \ell(\lambda)=m\}}
  q^{|\lambda|}$  is symmetric and unimodal with center of symmetry at $\frac{m(n+1)}{2}$.
\end{pro}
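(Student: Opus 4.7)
The plan is to reduce the claim to a classical unimodality fact about products of Gaussian binomials, via a generating function identity for overpartitions.

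First I would observe that every overpartition $\lambda$ with $\lambda_1 \leqslant n$ decomposes uniquely as a pair $(\mu,\nu)$, where $\nu$ is the partition formed by the overlined parts (necessarily distinct, with $\nu_1 \leqslant n$) and $\mu$ is the partition of the non-overlined parts (with $\mu_1 \leqslant n$). This yields
$$\sum_{\substack{\lambda \in \overline{{\cal P}} \\ \lambda_1 \leqslant n}} q^{|\lambda|} x^{\ell(\lambda)} \;=\; \prod_{i=1}^n (1+xq^i) \cdot \prod_{i=1}^n \frac{1}{1-xq^i},$$
which is also immediate from Corollary \ref{corollario over}. Extracting the coefficient of $x^m$ and applying the two standard $q$-binomial expansions
$$\prod_{i=1}^n (1+xq^i) = \sum_{j\geqslant 0} q^{\binom{j+1}{2}} \binom{n}{j}_q x^j, \qquad \prod_{i=1}^n \frac{1}{1-xq^i} = \sum_{k \geqslant 0} q^k \binom{n+k-1}{k}_q x^k,$$
I would obtain
$$\sum_{\substack{\lambda \in \overline{{\cal P}} \\ \lambda_1 \leqslant n, \; \ell(\lambda) = m}} q^{|\lambda|} \;=\; \sum_{j=0}^m q^{\binom{j+1}{2} + (m-j)} \binom{n}{j}_q \binom{n+m-j-1}{m-j}_q.$$

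Next I would verify that each summand is symmetric and unimodal about $m(n+1)/2$. By Sylvester's classical theorem, $\binom{n}{j}_q$ and $\binom{n+m-j-1}{m-j}_q$ have non-negative coefficients and are symmetric and unimodal. A short computation of extremal degrees then shows that $q^{\binom{j+1}{2}} \binom{n}{j}_q$ is symmetric about $j(n+1)/2$ and $q^{m-j} \binom{n+m-j-1}{m-j}_q$ is symmetric about $(m-j)(n+1)/2$. By the classical fact (see Stanley's survey on log-concave and unimodal sequences) that the product of two symmetric unimodal polynomials with non-negative coefficients is again symmetric and unimodal, with center of symmetry equal to the sum of the two centers, each summand above is symmetric and unimodal about $m(n+1)/2$.

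Finally, since all summands share the same center of symmetry $m(n+1)/2$, their sum is symmetric and unimodal about that point, which is the claim. The most delicate step is the elementary degree computation ensuring that the two shifted Gaussian binomials have centers summing to $m(n+1)/2$ independently of the split $j+(m-j)=m$; beyond that, the argument is a direct application of classical results.
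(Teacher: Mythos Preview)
Your argument is correct and is essentially the same as the paper's. The paper phrases the two factors as $e_j(q,\ldots,q^n)$ and $h_{m-j}(q,\ldots,q^n)$ before identifying them with (shifted) Gaussian binomials, and separately notes the symmetry $q^{(n+1)m}P_{n,m}(q^{-1})=P_{n,m}(q)$ at the outset, but the decomposition, the center-of-symmetry computation, and the appeal to Sylvester's theorem together with the product-of-symmetric-unimodal fact are identical to yours.
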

\begin{proof}
	Let $P_{n,m}(q):=\sum\limits_{\{\lambda \in \overline{{\cal P}}: \lambda_1\leqslant n, \ell(\lambda)=m\}}
  q^{|\lambda|}$.   It is easy to see that $\deg(P_{n,m})=nm$ and that $q^{(n+1)m}P_{n,m}(q^{-1})=P_{n,m}(q)$. Moreover, by Corollary \ref{corollario over} and Theorem \ref{odd eulerian}

  \begin{eqnarray*}
    \sum\limits_{m \geqslant 0} P_{n,m}(q)x^m &=& \frac{\prod\limits_{i=1}^{n}(1+xq^i)}
  {\prod\limits_{i=1}^{n}(1-xq^i)} \\
     &=& \left(\sum \limits_{j=0}^n e_j(q,q^2,...,q^n) x^j\right) \left(\sum \limits_{r\geqslant 0} h_r(q,q^2,...,q^n) x^r \right) \\
     &=& \sum \limits_{m\geqslant 0} \left(\sum \limits_{i=0}^m  e_i(q,q^2,...,q^n)h_{m-i}(q,q^2,...,q^n) \right) x^m \\
     &=& \sum \limits_{m\geqslant 0} \left(\sum \limits_{i=0}^m  e_i(1,q,...,q^{n-1})h_{m-i}(1,q,...,q^{n-1}) \right)q^m x^m,
  \end{eqnarray*}  where $e_i$ are the elementary symmetric functions and $h_i$ are the complete symmetric functions (see, e.g., \cite[Chapter 1]{Macdonald}).
  It is well known that $e_i(1,q,...,q^{n-1})=q^{\binom{i}{2}}\binom{n}{i}_q$, while $h_{m-i}(1,q,...,q^{n-1})=\binom{n+m-i-1}{m-i}_q$ (see, e.g., \cite[Example 1.3]{Macdonald}) where
  $\binom{a}{b}_q:= [a]_q!/([b]_q![a-b]_q)!$ is the $q$-binomial coefficient.
  Since $\binom{a}{b}_q$ is a symmetric unimodal polynomial of degree $b(a-b)$ (see, e.g., \cite[Theorem 11]{Sta-Uni}) and the product of two symmetric unimodal polynomials with nonnegative coefficients
  is again symmetric and unimodal (see, e.g. \cite[Proposition 1]{Sta-Uni}), we have that the product $e_i(1,q,...,q^{n-1})h_{m-i}(1,q,...,q^{n-1})$ is symmetric and unimodal with center of symmetry $\frac{m(n-1)}{2}$. Therefore $P_{n,m}(q)$ is symmetric and unimodal with center of symmetry $\frac{m(n+1)}{2}$.
 \end{proof}
 Note that Proposition \ref{unimod over} is related to, but different from, \cite[Conjecture 7.2]{Dousse Byungchan} namely that the polynomial $\sum\limits_{\{\lambda \in \overline{{\cal P}}: \lambda_1\leqslant n, \ell(\lambda)\leqslant m\}}q^{|\lambda|}$ is unimodal.

Note that the parabolic analogues of the odd and even $q$-Eulerian polynomials  don't factor nicely,
in general. For example, one can check that
$\sum_{\sigma \in S^{\{2\}}_4} q^{\omaj(\sigma)}=5q^3 +3q^2 + 3q + 1)$,
and that
$\sum_{\sigma \in S^{\{1,3\}}_5} q^{\emaj(\sigma)}=16q^3 + 4q^2 + 9q + 1$.
Also, the bivariate generating function of
$\omaj$ and $\emaj$ does not seem to factor. For example,
$\sum_{\sigma \in S_{3}} q_1^{\omaj(\sigma)} q_2^{\emaj(\sigma)}=q_1 q_2 + 2q_1 + 2q_2 + 1$.
The ``signed'' generating functions,
however, can always be reduced to that over a certain subset, which, in turn, can
often, though not always, be computed combinatorially.

For $n \in \mathbb{P}$ we define:
\[
D(S_n) := \{ \sigma \in S_n : | \sm(i)-\sm(i^{\ast})| \leq 1 \; \mbox{if} \;i \in [n-1]  \}.
\]
So, for example, $21534 \in D(S_5)$ while $23541 \notin D(S_5)$. We call the elements of $D(S_n)$
\textit{domino permutations}. It is not hard to see that
$|D(S_n)| = 2^{\left\lfloor \frac{n}{2} \right\rfloor} \left\lceil \frac{n}{2} \right\rceil !$ and that, if $n$ is even,
$D(S_n) = \{ \sigma \in S_n : | \sigma(i)-\sigma(i^{\ast})| \leq 1 \; \mbox{if} \;i \in [n-1]  \}$.

The proposition below gives a sign-reversing involution that reduces the computation of the signed
generating function over any quotient to the corresponding set of domino permutations.

\begin{pro} \label{domino-odd}
Let $n \in \mathbb{P}$, and $J \subseteq [n-1]$. Then
\[
\sum_{\sigma \in S_{n}^{J}} (-1)^{\ell(\sigma)}q_{1}^{\omaj(\sigma)} q_{2}^{\emaj(\sigma)} x_{1}^{\odes(\sigma)} x_{2}^{\edes(\sigma)}=
\sum_{\sigma \in D(S_n)^J} (-1)^{\ell(\sigma)} q_{1}^{\omaj(\sigma)} q_{2}^{\emaj(\sigma)} x_{1}^{\odes(\sigma)} x_{2}^{\edes(\sigma)}.
\]
\end{pro}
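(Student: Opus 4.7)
The plan is to exhibit a sign-reversing involution $\phi$ on $S_n^J \setminus D(S_n)^J$ that preserves the statistics $\omaj$, $\emaj$, $\odes$, and $\edes$. Given $\sigma \in S_n^J \setminus D(S_n)^J$, let $k = k(\sigma)$ be the smallest positive integer with $2k \leq n$ and $|\sm(2k-1)-\sm(2k)| \geq 2$, and define $\phi(\sigma) := (2k-1,2k) \cdot \sigma$, i.e., the permutation obtained from $\sigma$ by interchanging the values $2k-1$ and $2k$ in the one-line notation.

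The main technical point I would establish first is the following: for any $\tau \in S_n$ and any $v \in [n-1]$, the descent set $D(\tau)$ coincides with the descent set of the permutation obtained from $\tau$ by swapping the values $v$ and $v+1$, \emph{provided} the positions of $v$ and $v+1$ in $\tau$ are not adjacent. This holds because at any position $j$ where neither $v$ nor $v+1$ is involved, the descent status is unchanged, and if exactly one of $v,v+1$ sits at position $j$ or $j+1$, then the comparison is with a value $m \notin \{v,v+1\}$, so replacing $v$ by $v+1$ (or vice versa) cannot change the inequality $\tau(j) > \tau(j+1)$. Only when $v$ and $v+1$ occupy adjacent positions does the swap toggle a descent. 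Applying this with $v=2k-1$ and the nonadjacency condition built into the definition of $k(\sigma)$, one obtains $D(\phi(\sigma)) = D(\sigma)$, and hence equality of $\omaj$, $\emaj$, $\odes$, $\edes$.

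Next I would verify the three structural properties of $\phi$. First, $\phi(\sigma) \in S_n^J$: because $D(\phi(\sigma))=D(\sigma)$ and $D(\sigma) \cap J = \varnothing$. Second, $\phi(\sigma) \notin D(S_n)^J$: the pair $\{2k-1,2k\}$ merely has its two positions exchanged, so the absolute difference of these positions is unchanged and still at least $2$. Third, $\phi$ is an involution: for all pairs $\{2j-1,2j\}$ with $j<k$ the positions are untouched, so the minimality $k(\phi(\sigma))=k(\sigma)=k$ is preserved, whence $\phi(\phi(\sigma))=(2k-1,2k)^2\sigma=\sigma$. Finally, $\phi$ reverses sign because $(2k-1,2k)=s_{2k-1}$ is a simple reflection and left multiplication by a simple reflection changes length by exactly $\pm 1$, so $(-1)^{\ell(\phi(\sigma))}=-(-1)^{\ell(\sigma)}$.

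With these four properties in hand, pairing $\sigma$ with $\phi(\sigma)$ cancels all terms on the left-hand side coming from $S_n^J \setminus D(S_n)^J$, leaving exactly the sum over $D(S_n)^J$. The only step I expect to require care is the descent-preservation lemma; the rest is bookkeeping once the pairing is set up correctly.
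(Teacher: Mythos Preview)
Your proposal is correct and follows essentially the same approach as the paper: both construct the sign-reversing involution $\sigma \mapsto (r,r^{\ast})\sigma$ where $r$ is chosen minimally so that the values $r$ and $r^{\ast}$ (equivalently $2k-1$ and $2k$) occupy nonadjacent positions, and both rely on the fact that this swap preserves $D(\sigma)$. Your write-up is more explicit about why the descent set is preserved and why the involution restricts to $S_n^J\setminus D(S_n)^J$, but the underlying argument is identical.
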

\begin{proof}
Let $\sigma \in S_{n}^{J} \setminus D(S_n)$ and
$r := \min \{ i \in [n-1] : | \sm(i)-\sm(i^{\ast})| \geq 2 \}$.
Define the map $\iota: S_{n}^{J} \setminus D(S_n) \rightarrow S_{n}^{J} \setminus D(S_n)$ by $\iota (\sigma):= (i, i^{\ast}) \, \sigma$, for all $\sigma \in S_{n}^{J} \setminus D(S_n)$. Then
$ \ell (\iota (\sigma)) \equiv \ell(\sigma)+1 \pmod{2}$, $D(\iota(\sigma)) =D(\sigma)$,
and $\iota (\iota (\sigma)) = \sigma$ for all $\sigma \in S_n^J \setminus D(S_n)$,
so the result follows.
\end{proof}


Note that there is a bijection between $D(S_{2m})$ and $S_m \times {\cal P}([m])$ obtained by associating to each $\sigma \in S_m$ and $S \subseteq [m]$ the permutation $u \in D(S_{2m})$ defined by
$$
u(2j-1):=
  \left\{
     \begin{array}{ll}
       2 \sigma(j)-1, & \hbox{if $j \notin S$,} \\
       2 \sigma(j), & \hbox{otherwise,}
     \end{array}
   \right.
 $$
and
$$
u(2j):=
  \left\{
     \begin{array}{ll}
       2 \sigma(j), & \hbox{if $j \notin S$,} \\
       2 \sigma(j)-1, & \hbox{otherwise,}
     \end{array}
   \right.
 $$
for $j \in [m]$.
So, for example, if $\sigma=4213$ and $S=\{ 2,3 \} $ then $u=78432156 $.
For this reason we identify these two sets and write $u=(\sigma,S)$
to mean that $u$ and $(\sigma,S)$ correspond under this bijection.
Also, note that if $J \subseteq [2m-1]$ then the bijection just
described restricts to a bijection between
$D(S_{2m}) \cap S_{2m}^J$ and
$ S_m^{J_e/2} \times {\cal P}(\{ i \in [m] : 2i-1 \notin J \}) $.

The proof of the following result is a routine check, and is therefore omitted.
\begin{lem}
\label{Atranslate}
Let $m \in \PP$, and $u \in D_{2m}$, $u=(\sigma, S)$.
Then
\begin{enumerate}
  \item $\ell(u)=4\ell(\sigma) + |S|$,
  \item $\odes(u)=|S|$, $\edes(u)=\des(\sigma)$,
  \item $\omaj(u)=\sum\limits_{t \in S}t$, $\emaj(u)=\maj(\sigma)$.
\end{enumerate}
\end{lem}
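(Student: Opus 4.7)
The plan is to verify each of the three claims by a direct case analysis based on the $2$-block structure of $u = (\sigma, S)$. Write $u$ as a concatenation of blocks $B_j := (u(2j-1), u(2j))$ for $j \in [m]$, where $B_j = (2\sigma(j)-1, 2\sigma(j))$ if $j \notin S$ and $B_j = (2\sigma(j), 2\sigma(j)-1)$ if $j \in S$. The crucial feature is that the entries of $B_j$ form the set $\{2\sigma(j)-1, 2\sigma(j)\}$, so for $j < j'$ every entry of $B_j$ exceeds every entry of $B_{j'}$ iff $\sigma(j) > \sigma(j')$.

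First I would prove (1): the inversions of $u$ split into \emph{inter-block} inversions (from pairs of positions in distinct blocks $B_j, B_{j'}$) and \emph{intra-block} inversions. By the observation above, each inter-block pair contributes $4$ inversions when $\sigma(j) > \sigma(j')$ and $0$ otherwise, yielding $4\ell(\sigma)$ in total by Proposition \ref{combA}. Each block contributes exactly one intra-block inversion iff it has been flipped, i.e.\ iff $j \in S$, giving the extra $|S|$.

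Next I would compute $D(u)$ and split it into its odd and even parts. At an \emph{odd} position $2j-1$ inside the block $B_j$, we have $u(2j-1) > u(2j)$ iff $j \in S$, so $D(u)_o = \{2j-1 : j \in S\}$; this immediately yields $\odes(u) = |S|$ and
\[
\omaj(u) = \sum_{j \in S}\frac{(2j-1)+1}{2} = \sum_{t \in S} t.
\]
At an \emph{even} position $2j$ with $j < m$, one compares the last entry of $B_j$ with the first entry of $B_{j+1}$ and checks the four cases determined by whether $j, j+1 \in S$; in each case the inequality $u(2j) > u(2j+1)$ reduces to $\sigma(j) > \sigma(j+1)$, using the elementary facts $2a > 2b-1 \Leftrightarrow a \geq b$ and $2a-1 > 2b \Leftrightarrow a > b$ (together with $\sigma(j) \neq \sigma(j+1)$). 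Hence $D(u)_e = 2 \, D(\sigma)$, which gives $\edes(u) = \des(\sigma)$ and $\emaj(u) = \maj(\sigma)$.

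The only mildly delicate point is the case analysis for the even-position descents, where one must check that the $\pm 1$ shifts introduced by the flipping flags at $j$ and $j+1$ never alter the truth value of the descent inequality; everything else is routine bookkeeping.
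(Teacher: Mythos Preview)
Your proof is correct and is exactly the routine verification that the paper alludes to; the paper itself omits the proof, saying only that it ``is a routine check, and is therefore omitted.'' Your block decomposition, the four-inversion count for each inverted pair of blocks, and the four-case analysis for the even descents are precisely the details one would expect to fill in.
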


The following result is a refinement, and extension, of \cite[Theorem 1]{Wachs}
(which is the case $J= \emptyset$, $q_1=q_2=q^2$, $x_1=x/q$ and $x_2=x$).

\begin{thm}
\label{teorema-odd}
Let $m \in \mathbb{P}$, and $J \subseteq [2m-1]$.
Then
\[
\sum_{\sigma \in S_{2m}^{J}} (-1)^{\ell(\sigma)}q_{1}^{\omaj(\sigma)} q_{2}^{\emaj(\sigma)} x_{1}^{\odes(\sigma)} x_{2}^{\edes(\sigma)}=
\prod_{\{ i \in [m] : 2i-1 \notin J \}} (1-x_1 q_1^{i}) \;
\sum_{ \tau \in S_{m}^{ J_e/2 }} q_{2}^{\maj(\tau)} x_{2}^{\des(\tau)}.
\]

\end{thm}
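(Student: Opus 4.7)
The plan is to combine the sign-reversing involution already established with the bijection between domino permutations and $S_m \times \mathcal{P}([m])$, and then let the $(-1)^{\ell(\sigma)}$ factor convert a sum over subsets into an explicit product.

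First, I would apply Proposition \ref{domino-odd} to reduce the left hand side to a sum over $D(S_{2m})^J = D(S_{2m}) \cap S_{2m}^J$. Next, I would invoke the bijection preceding Lemma \ref{Atranslate}, which identifies $D(S_{2m}) \cap S_{2m}^J$ with $S_m^{J_e/2} \times \mathcal{P}(\{ i \in [m] : 2i-1 \notin J \})$ via $u \leftrightarrow (\sigma, S)$. This is the key structural step: it decouples the ``even positions'' (carrying a permutation of $[m]$ subject to descent restrictions from $J_e$) from the ``odd positions'' (encoded by a subset $S$).

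Then I would substitute the statistic values from Lemma \ref{Atranslate}: for $u = (\sigma, S)$, $\ell(u) = 4\ell(\sigma) + |S|$, $\odes(u) = |S|$, $\edes(u) = \des(\sigma)$, $\omaj(u) = \sum_{t \in S} t$, and $\emaj(u) = \maj(\sigma)$. Observing that $(-1)^{4\ell(\sigma)} = 1$, the sign reduces to $(-1)^{|S|}$, so the summand becomes
\[
(-1)^{|S|} q_1^{\sum_{t \in S} t} x_1^{|S|} \cdot q_2^{\maj(\sigma)} x_2^{\des(\sigma)} = \Bigl(\prod_{t \in S} (-x_1 q_1^t)\Bigr) \cdot q_2^{\maj(\sigma)} x_2^{\des(\sigma)}.
\]
Summing over $(\sigma, S)$ in the product set, the sum factors as
\[
\Bigl( \sum_{S \subseteq \{ i \in [m] : 2i-1 \notin J \}} \prod_{t \in S} (-x_1 q_1^t) \Bigr) \Bigl( \sum_{\tau \in S_m^{J_e/2}} q_2^{\maj(\tau)} x_2^{\des(\tau)} \Bigr),
\]
and the sum over subsets collapses to $\prod_{\{ i \in [m] : 2i-1 \notin J \}} (1 - x_1 q_1^i)$, giving the claimed identity.

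I do not expect serious obstacles, since all the hard combinatorial work has already been packaged: the sign-reversing involution of Proposition \ref{domino-odd} disposes of non-domino elements, and Lemma \ref{Atranslate} (a routine check) handles the statistic translation. The only subtle point is verifying that the bijection restricts correctly to the parabolic quotient, namely that the descent constraints on $u$ indexed by $J$ decompose into (i) descent constraints on $\sigma$ indexed by $J_e/2$ and (ii) the freedom in $S$ being controlled exactly by whether $2i-1 \in J$; this follows from the explicit form of the bijection, since descents of $u$ at odd positions $2i-1$ are governed by membership of $i$ in $S$, while descents at even positions $2i$ correspond to descents of $\sigma$ at $i$.
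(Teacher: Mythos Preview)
Your proposal is correct and follows essentially the same approach as the paper: reduce to domino permutations via Proposition~\ref{domino-odd}, apply the bijection $D(S_{2m})\cap S_{2m}^J \leftrightarrow S_m^{J_e/2}\times\mathcal{P}(\{i\in[m]:2i-1\notin J\})$, translate statistics via Lemma~\ref{Atranslate}, and factor the resulting sum over subsets into a product. The only cosmetic difference is that the paper invokes Proposition~\ref{domino-odd} at the end rather than at the beginning.
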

\begin{proof}

Let $A:=\{ i \in [m] \, : \, 2i-1 \notin J \}$. Then by Lemma \ref{Atranslate} and the considerations
preceding it we have that
\begin{eqnarray*}
  \sum_{\sigma \in D^J_{2m}} (-1)^{\ell(\sigma)} q_{1}^{\omaj(\sigma)} q_{2}^{\emaj(\sigma)}x_{1}^{\odes(\sigma)} x_{2}^{\edes(\sigma)} &=&\sum_{\tau \in S_{m}^{J_e/2}}
\sum_{T \subseteq A} (-1)^{4 \ell(\tau) + |T|} q_{1}^{\sum_{t \in T} t} q_{2}^{\maj(\tau)} x_{1}^{|T|} x_{2}^{\des(\tau)}\\
&=&\sum_{ \tau \in S_{m}^{ J_e/2 }} q_{2}^{\maj(\tau)} x_{2}^{\des(\tau)} \prod_{a\in A} (1- x_1 q_1^{a}).
\end{eqnarray*}

The result follows from Proposition \ref{domino-odd}.
\end{proof}

We note the following consequences of Theorem \ref{teorema-odd}.

\begin{cor}
Let $m\in \mathbb{P}$. Then

  \[
\sum_{\sigma \in S_{2m}} (-1)^{\ell(\sigma)}q_{1}^{\omaj(\sigma)} q_{2}^{\emaj(\sigma)}=
[m]_{q_2}!\prod_{ i =1}^m (1- q_1^{i}).
\]
\end{cor}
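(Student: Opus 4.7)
The plan is to derive this corollary directly by specializing Theorem \ref{teorema-odd}, which is the workhorse we just proved, and then invoking a classical identity of MacMahon.

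First, I would set $J=\varnothing$ in Theorem \ref{teorema-odd}. Then $S_{2m}^{J}=S_{2m}$, the index set $\{i\in [m]:2i-1\notin J\}$ becomes all of $[m]$, and $J_e/2=\varnothing$, so $S_m^{J_e/2}=S_m$. Next I would specialize $x_1=x_2=1$, which simply discards the two descent-counting variables. With these specializations, Theorem \ref{teorema-odd} yields
\[
\sum_{\sigma \in S_{2m}} (-1)^{\ell(\sigma)}q_{1}^{\omaj(\sigma)} q_{2}^{\emaj(\sigma)} \;=\; \prod_{i=1}^{m}\bigl(1-q_{1}^{i}\bigr)\,\sum_{\tau \in S_{m}} q_{2}^{\maj(\tau)}.
\]

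To finish, I would invoke MacMahon's classical identity $\sum_{\tau\in S_m}q^{\maj(\tau)}=[m]_q!$ (see, e.g., the equidistribution of $\maj$ and $\inv$ on $S_m$). Substituting this evaluation of the inner sum gives exactly the claimed product formula $[m]_{q_2}!\prod_{i=1}^{m}(1-q_1^{i})$.

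There is no real obstacle here: the entire content is packaged in Theorem \ref{teorema-odd}, and the only additional ingredient is the well-known $\maj$-generating function on $S_m$. The proof is therefore a one-line specialization, so I would present it as such, and accordingly the corollary statement could simply be followed by the remark that it is the case $J=\varnothing$, $x_1=x_2=1$ of Theorem \ref{teorema-odd} combined with MacMahon's identity.
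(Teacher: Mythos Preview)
Your proposal is correct and matches the paper's approach exactly: the paper presents this corollary as an immediate consequence of Theorem~\ref{teorema-odd} without further comment, and your specialization $J=\varnothing$, $x_1=x_2=1$ together with MacMahon's identity $\sum_{\tau\in S_m}q^{\maj(\tau)}=[m]_q!$ is precisely the intended one-line derivation.
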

Note that for symmetric groups of odd rank the bivariate signed generating function of $\omaj$ and $\emaj$ does not factorize nicely. For example,  $\sum_{\sigma \in S_5} (-1)^{\ell(\sigma)}q_{1}^{\omaj(\sigma)} q_{2}^{\emaj(\sigma)}=(1+y^2)(1+x^3+xy^4+x^4y^4-2x^3y^2-2xy^2)$.

\begin{cor}
\label{eulerian}
Let $m\in \mathbb{P}$ and $J\subseteq [2m-1]$. Then
\[
\sum_{\sigma \in S_{2m}^{J}} (-1)^{\ell(\sigma)}q^{\maj(\sigma)} x^{\des(\sigma)} =
\prod_{\{ i \in [m] : 2i-1 \notin J \}} (1-x q^{2i-1}) \;
\sum_{ \tau \in S_{m}^{ J_e/2 }}
q^{2 \maj(\tau)} x^{\des(\tau)}.
\]
\end{cor}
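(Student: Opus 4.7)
The plan is to derive this corollary directly from Theorem \ref{teorema-odd} by an appropriate substitution in the four variables $q_1,q_2,x_1,x_2$. Specifically, I would set $q_1 = q_2 = q^2$, $x_1 = x/q$, and $x_2 = x$.

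The key identity on the left-hand side is that, for any $\sigma \in S_{2m}$,
\[
2\,\omaj(\sigma) + 2\,\emaj(\sigma) \;=\; \sum_{i \in D(\sigma)_o}(i+1) \;+\; \sum_{i \in D(\sigma)_e} i \;=\; \maj(\sigma) + \odes(\sigma),
\]
since doubling $(i+1)/2$ for odd $i$ contributes $i$ plus a unit per odd descent, while doubling $i/2$ for even $i$ simply contributes $i$. Therefore, with the chosen substitution,
\[
q_1^{\omaj(\sigma)} q_2^{\emaj(\sigma)} x_1^{\odes(\sigma)} x_2^{\edes(\sigma)} \;=\; q^{\maj(\sigma) + \odes(\sigma)} \Bigl(\tfrac{x}{q}\Bigr)^{\!\odes(\sigma)} x^{\edes(\sigma)} \;=\; q^{\maj(\sigma)}\, x^{\des(\sigma)},
\]
which turns the left-hand side of Theorem \ref{teorema-odd} into the left-hand side of the corollary.

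For the right-hand side, the substitution yields $(1 - x_1 q_1^i) = (1 - x q^{2i-1})$ for each factor in the product, and $q_2^{\maj(\tau)} x_2^{\des(\tau)} = q^{2\maj(\tau)} x^{\des(\tau)}$ in the sum over $S_m^{J_e/2}$. These are exactly the factors appearing on the right-hand side of the corollary.

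There is no real obstacle here: the corollary is a specialization of Theorem \ref{teorema-odd}, and the only thing to verify is the arithmetic of the exponents, which is the identity displayed above. I would therefore present the proof as a one-line substitution argument, with the key exponent identity stated explicitly so the reader can check how the odd and even major indices recombine into the ordinary major index, and how the extra factor $q^{\odes(\sigma)}$ is absorbed by $x_1 = x/q$.
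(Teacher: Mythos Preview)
Your proposal is correct and matches the paper's own proof exactly: the paper also derives the corollary by substituting $q_1=q_2=q^2$, $x_1=x/q$, $x_2=x$ into Theorem \ref{teorema-odd}. Your added verification of the exponent identity $2\,\omaj(\sigma)+2\,\emaj(\sigma)=\maj(\sigma)+\odes(\sigma)$ makes explicit what the paper leaves to the reader.
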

\begin{proof}
This follows immediately by taking $q_1=q_2=q^2$, $x_1 = \frac{x}{q}$, and $x_2=x$ in
Theorem \ref{teorema-odd}.
\end{proof}

\begin{cor}
Let $m\in \mathbb{P}$. Then
$$
\sum_{\sigma \in S_{2m}^{[2m]_o}} (-1)^{\ell(\sigma)}
q_{1}^{\omaj(\sigma)} q_{2}^{\emaj(\sigma)} x_{1}^{\odes(\sigma)} x_{2}^{\edes(\sigma)}
=\sum_{ \tau \in S_{m}} q_{2}^{\maj(\tau)} x_{2}^{\des(\tau)}. \Box
$$
\end{cor}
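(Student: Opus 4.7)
The plan is to obtain this identity as a direct specialization of Theorem~\ref{teorema-odd} to the particular quotient indexed by the set of odd elements $J := [2m]_o = \{1,3,5,\dots,2m-1\} \subseteq [2m-1]$.

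First I would unpack the two parameters in Theorem~\ref{teorema-odd} for this choice of $J$. The index set of the product on the right-hand side is $\{i \in [m] : 2i-1 \notin J\}$; since every odd integer in $[2m-1]$ has the form $2i-1$ for some $i \in [m]$ and all of these belong to $J$, this set is empty, and the product becomes the empty product $1$. Next, $J_e = J \cap 2\mathbb{Z} = \emptyset$ because $J$ consists entirely of odd integers, so $J_e/2 = \emptyset$ and consequently $S_m^{J_e/2} = S_m^{\emptyset} = S_m$.

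Substituting these two observations into the right-hand side of Theorem~\ref{teorema-odd} collapses it immediately to $\sum_{\tau \in S_m} q_2^{\maj(\tau)} x_2^{\des(\tau)}$, which is exactly the claimed expression. So the whole proof is just these two observations plus the invocation of Theorem~\ref{teorema-odd}. There is really no obstacle; the only thing to double-check is that the notation $[2m]_o$ is consistent with the definition from Section~2 (elements of $[2m]$ congruent to $1$ mod $2$) and that these odd elements indeed lie in $[2m-1]$ so that $J$ is a legitimate subset of the generating set, both of which are immediate since $2m-1$ is odd.
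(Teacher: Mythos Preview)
Your proposal is correct and matches the paper's approach exactly: the corollary is stated with a $\Box$ and no explicit proof, as it is an immediate specialization of Theorem~\ref{teorema-odd} with $J=[2m]_o$. Your verification that the product index set is empty and that $J_e/2=\emptyset$ (so $S_m^{J_e/2}=S_m$) is precisely the content needed.
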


We note that the previous ``sign-balance'' identities are examples
of the phenomenon described in \cite{EFPT}, namely that the signed enumeration on
$2m$ objects by certain statistics is essentially equivalent to the ordinary enumeration on $m$ objects
by the same statistics.

\section{Types $B$ and $D$}
\label{B}

In this section we define and study odd and even analogues of the descent and flag-major statistics
on the classical Weyl groups of types $B$ and $D$, and compute the generating functions of these
statistics twisted by the one-dimensional characters of these groups.

 For $i\in [n-1]$ we write, for brevity, ``$s_i$'' rather than ``$s^B_i$''.
We also define
$$
s_i^*:=(i, i^*)(-i,-i^*)
$$
for all $i\in [n-1]$. Note that the involution of $B_n$ defined by $\sigma \mapsto s_i^*\sigma$,
restricts to $D_n$.

We define six statistics on the hyperoctahedral group $B_n$  by letting
$$
  \omaj(\sigma) := \sum_{i \in D(\sigma)_o} \frac{i+1}{2},
  ~ \emaj(\sigma):=\sum_{i \in D(\sigma)_e} \frac{i}{2},$$
 $$ \odes(\sigma) := |D(\sigma)_o|, ~ \edes(\sigma) := |D(\sigma)_e|,$$
 $$ \oneg(\sigma) := |\Neg(\sigma)_o|,~ \eneg(\sigma) := |\Neg(\sigma)_e|,
$$
 for all $\sigma \in B_n$.
So, for example, if $\sigma=[-2,5,3,1,-4]$
then $D(\sigma)=\{ 0,2,3,4 \}$, $\Neg(\sigma)=\{ 1,5 \}$,
$\omaj(\sigma)=2$, $\odes(\sigma)=1$, $\oneg(\sigma)=2$, $\emaj(\sigma)=3$, $\edes(\sigma)=3$, and $\eneg(\sigma)=0$.
Note that, if $\sigma \in S_n$, then the first four of these statistics coincide with those already defined in the previous section by the same name.

We then define the {\em odd flag-major index} and the {\em even flag-major index} of $\sigma \in B_n$
by letting
$$
\ofmaj(\sigma) := 2 \omaj(\sigma) + \oneg(\sigma), ~~\efmaj(\sigma) := 2 \emaj(\sigma) + \eneg(\sigma).
$$
So, if $\sigma$ is as above then $\ofmaj(\sigma)=\efmaj(\sigma)=6$.

For $\sigma \in B_n$ we let $|\sigma|_n := [\sigma(1), \ldots , \sigma(n-1), |\sigma(n)|]$.
We then define six more statistics on $B_n$ by letting

$$ \oDmaj(\sigma) :=  \ofmaj(|\sigma|_n), ~\eDmaj(\sigma) :=  \efmaj(|\sigma|_n),$$
$$\oneg_D(\sigma) := \oneg(|\sigma|_n),  ~ \eneg_D(\sigma) := \eneg(|\sigma|_n),$$
$$ \odes_D(\sigma) := \odes(|\sigma|_n), ~ \edes_D(\sigma) := \edes(|\sigma|_n),
$$
for all $\sigma \in B_n$.
So, for example, if $\sigma$ is as above
then $\odes_D(\sigma)=1$, $\oDmaj(\sigma)=5$, $\edes_D(\sigma)=2$ and $\eDmaj(\sigma)=2$. We call $\oDmaj$ and $\eDmaj$ the
{\em odd D-major index} and the {\em even D-major index} of $\sigma \in B_n$, respectively.

Our aim is to compute the generating functions
$ \sum\limits_{\sigma \in B_n} \chi(\sigma) x^{\ofmaj(\sigma)} y^{\odes(\sigma)} z^{\oneg(\sigma)}$ and
$ \sum\limits_{\sigma \in D_n} \chi(\sigma) x^{\oDmaj(\sigma)} y^{\odes_D(\sigma)} z^{\oneg_D(\sigma)}$
where $\chi$ is any one-dimensional character of the corresponding group,
and the analogue even ones.

\subsection{The trivial character}

We start with the trivial characters of $B_n$ and $D_n$.
Recall that we let $ p_n := (1+(-1)^n)/2$.
\begin{thm}
\label{trivialB}
Let $n \geq 2$. Then
$$
\sum_{\sigma \in B_n}x^{\ofmaj(\sigma)} y^{\odes(\sigma)} z^{\oneg(\sigma)} =
                   \displaystyle
                   \frac{n!}{2^{\lfloor \frac{n}{2}  \rfloor}} (1+xz)^{p_{n+1}}
                   \prod_{j=1}^{\lfloor \frac{n}{2}  \rfloor}
                   (1+3xz+3yx^{2j}+yzx^{2j+1}),
$$
and
$$
\sum_{\sigma \in B_n}x^{\efmaj(\sigma)} y^{\edes(\sigma)} z^{\eneg(\sigma)} =
                   \displaystyle
                   \frac{n!}{2^{\lfloor \frac{n-1}{2}  \rfloor}} (1+y) (1+xz)^{p_n}
                   \prod_{j=1}^{\lfloor \frac{n-1}{2}  \rfloor}
                   (1+3xz+3yx^{2j}+yzx^{2j+1}).
$$
\end{thm}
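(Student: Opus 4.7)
The plan is to prove both equations by induction on $n$, using two different recursions whose form depends on the parity of $n$; the base cases $n=1,2$ can be checked by hand. Let $f_n(x,y,z)$ denote the left hand side of the first equation. The key observation throughout is that the parity of a position dictates whether that position contributes to the odd statistics or to the even ones, so removing the last one or two entries of $\sigma$ isolates a factor that can be read off from a short sign-based case analysis.

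When $n$ is odd I would partition $B_n$ by $b:=\sigma(n)\in[\pm n]$ and let $\tau\in B_{n-1}$ be the flattening of $(\sigma(1),\dots,\sigma(n-1))$. Position $n$ is odd, so it contributes $[b<0]$ to $\oneg$; position $n-1$ is even, so any descent there is invisible to $\odes$ and $\omaj$. Flattening preserves relative orders and signs on the first $n-1$ entries, so $D(\sigma)\cap[0,n-2]=D(\tau)$, giving $\omaj(\sigma)=\omaj(\tau)$, $\odes(\sigma)=\odes(\tau)$, and $\oneg(\sigma)=\oneg(\tau)+[b<0]$. Summing over the $2n$ choices of $b$ yields
\[
f_n \;=\; n(1+xz)\,f_{n-1}.
\]

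When $n$ is even I would partition $B_n$ by the ordered pair $(a,b):=(\sigma(n-1),\sigma(n))$, letting $\tau\in B_{n-2}$ be the flattening of $(\sigma(1),\dots,\sigma(n-2))$. Now position $n-1$ is odd and position $n$ is even, so $\oneg$ picks up $[a<0]$ from position $n-1$ only, while the possible descent at the odd position $n-1$ contributes $[a>b]$ to $\odes$ and $\tfrac{n}{2}[a>b]$ to $\omaj$; the descent (if any) at the even position $n-2$ is invisible, and flattening identifies the rest of the descent data with $D(\tau)$. Splitting the sum over $(a,b)\in[\pm n]^2$ with $|a|\neq|b|$ into the four sign-blocks $(++),(+-),(-+),(--)$, each of which contains $n(n-1)$ ordered pairs, one evaluates
\[
\sum_{(a,b)} x^{n[a>b]+[a<0]}\,y^{[a>b]}\,z^{[a<0]} \;=\; \tfrac{n(n-1)}{2}\bigl(1+3xz+3yx^n+yzx^{n+1}\bigr),
\]
so $f_n=\tfrac{n(n-1)}{2}(1+3xz+3yx^n+yzx^{n+1})f_{n-2}$. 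This is precisely the $j=n/2$ factor in the stated product, and a short check confirms that both $n!/2^{\lfloor n/2\rfloor}$ and $(1+xz)^{p_{n+1}}$ evolve consistently under the two recursions.

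The second equation is proved by the same scheme with the roles of the two recursions swapped: for $n$ even one removes only the last entry (position $n$ is now even and feeds $\eneg$, while the odd position $n-1$ contributes nothing to $\edes,\emaj$), giving $g_n=n(1+xz)g_{n-1}$; for $n$ odd one removes the last two entries, giving $g_n=\tfrac{n(n-1)}{2}(1+3xz+3yx^{n-1}+yzx^n)g_{n-2}$. The main bookkeeping obstacle is keeping track of which position contributes to which statistic as parities change and verifying that the four sign-blocks combine into the symmetric-looking factor $1+3xz+3yx^{2j}+yzx^{2j+1}$; no clever bijection is needed, since the removed positions interact with $\tau$ only through the flattening (which preserves relative orders and signs) and through at most one descent whose weight factors off explicitly.
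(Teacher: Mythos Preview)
Your proposal is correct and follows essentially the same strategy as the paper: induction on $n$, removing the last entry when the parity makes position $n-1$ invisible to the statistics in question, and removing the last two entries otherwise, so that the contribution of the deleted block factors off. The paper organizes the two-entry case according to the four order/sign regimes $0<i<j$, $(i<0,\,i<j)$, $0>i>j$, $(i>0,\,i>j)$ (which map directly to the four monomials $1$, $xz$, $yzx^{n+1}$, $yx^n$ with multiplicities $1,3,1,3$), whereas you group by the four sign-blocks $(\pm,\pm)$ and count within each; the two decompositions are reparametrizations of the same enumeration and yield the identical factor $\binom{n}{2}(1+3xz+3yx^{n}+yzx^{n+1})$.
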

\begin{proof}
We only prove the formula for the odd statistics, the proof for the even ones being
analogous. We proceed by induction on $n \geq 2$, the result being easy to check for $n=2$.
Suppose first that $n$ is odd. Then we have that
\begin{eqnarray*}
\sum_{\sigma \in B_n}x^{\ofmaj(\sigma)} y^{\odes(\sigma)} z^{\oneg(\sigma)}
& = & \sum_{i \in [\pm n]} \sum_{\{ \sigma \in B_{n} : \sigma(n)=i \}}
  x^{\ofmaj(\sigma)} y^{\odes(\sigma)} z^{\oneg(\sigma)} \\
& = & \sum_{\tau \in B_{n-1}} \sum_{i \in [n]} (1+zx) x^{\ofmaj(\tau)} y^{\odes(\tau)} z^{\oneg(\tau)}\\
& = & n(1+zx) \sum_{\tau \in B_{n-1}} x^{\ofmaj(\tau)} y^{\odes(\tau)} z^{\oneg(\tau)},
\end{eqnarray*} as desired.
Suppose now that $n$ is even.
Let $\sigma \in B_n$ and $[\tau,i,j]$ be its window notation ($i,j \in [\pm n]$).
Then one can check that
\[
\ofmaj(\sigma)= \left\{
                 \begin{array}{ll}
                   \ofmaj(\tau), & \hbox{if $0< i< j$,} \\
                   \ofmaj(\tau)+1, & \hbox{if $i<0$ and $i<j$,} \\
                   \ofmaj(\tau)+n+1, & \hbox{if $0 > i>j$,} \\
                   \ofmaj(\tau)+n, & \hbox{if $i>0$ and $i>j$.} \\
                 \end{array}
               \right.
\]
Therefore we conclude that
\begin{eqnarray*}
 \sum \limits_{\sigma \in B_{n}} x^{\ofmaj(\sigma)} y^{\odes(\sigma)} z^{\oneg(\sigma)}
 &=& \sum_{\substack{i,j \in [\pm n]\\ i \neq \pm j}}
 \sum \limits_{\substack{\sigma \in B_n \\ \sigma(n-1)=i,\, \sigma(n)=j}}
 x^{\ofmaj(\sigma)} y^{\odes(\sigma)} z^{\oneg(\sigma)} \\
 &=& \sum_{0< i<j} \sum \limits_{\tau \in B_{n-2}} x^{\ofmaj(\tau)} y^{\odes(\sigma)} z^{\oneg(\tau)} \\
 &+& \sum_{\substack{i<j \\ i<0 \\ i \neq -j}} \sum \limits_{\tau \in B_{n-2}} x^{\ofmaj(\tau)+1} y^{\odes(\tau)} z^{\oneg(\tau)+1}\\
 &+& \sum_{0> i>j} \sum \limits_{\tau \in B_{n-2}} x^{\ofmaj(\tau)+n+1} y^{\odes(\tau)+1} z^{\oneg(\tau)+1} \\
 &+& \sum_{\substack{i>j \\ i>0 \\ i \neq -j}} \sum \limits_{\tau \in B_{n-2}} x^{\ofmaj(\tau)+n} y^{\odes(\tau)+1} z^{\oneg(\tau)}\\
\end{eqnarray*}
$$
 =\binom{n}{2} (1 +3z x + z y x^{n+1} + 3 y x^{n})
   \sum \limits_{\tau \in B_{n-2}} x^{\ofmaj(\tau)} y^{\odes(\tau)} z^{\oneg(\tau)},
$$
and the result again follows.
\end{proof}

The corresponding result for $D_n$ is a consequence of the one in type $B$.
Note that if $f_1,...,f_k : B_n \rightarrow \mathbb{N}$ then
\begin{equation}
\label{relateBD}
 \sum_{\sigma \in B_n}x_1^{f_1(|\sigma|_n)}\cdots x_k^{f_k(|\sigma|_n)} =
 2\sum_{\sigma \in D_n}x_1^{f_1(|\sigma|_n)}\cdots x_k^{f_k(|\sigma|_n)}.
\end{equation}

\begin{thm}
\label{trivialD}
Let $n \geqslant 3$. Then
$$
\sum_{\sigma \in D_n}x^{\oDmaj(\sigma)}y^{\odes_D(\sigma)}z^{\oneg_D(\sigma)} =
\displaystyle \frac{n!}{2^{\lfloor \frac{n}{2}  \rfloor}} (1+2zx+yx^{n})^{p_n}
\prod_{j=1}^{\lfloor \frac{n-1}{2}  \rfloor }(1+3xz+3yx^{2j}+yzx^{2j+1})
$$
and
$$
\sum_{\sigma \in D_n}x^{\eDmaj(\sigma)}y^{\edes_D(\sigma)}z^{\eneg_D(\sigma)}=
\displaystyle \frac{n!}{2^{\lfloor \frac{n-1}{2}  \rfloor}} (1+y)
 \left(1+2zx+yx^{n}\right)^{p_{n+1}} \prod_{j=1}^{\lfloor \frac{n-2}{2}  \rfloor}
 (1+3zx+3yx^{2j}+yzx^{2j+1}).
$$
\end{thm}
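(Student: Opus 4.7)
The plan is to deduce both identities from Theorem~\ref{trivialB} by restricting to the subset
\[
B_n^+ := \{ \pi \in B_n : \pi(n) > 0 \}.
\]
The key observation is that the map $\pi \mapsto |\pi|_n$ from $B_n$ to $B_n^+$ is two-to-one, while its restriction to $D_n$ is a bijection onto $B_n^+$: given $\rho \in B_n^+$, exactly one of the two preimages (either $\rho$ itself or $\rho$ with the sign of its last entry flipped) has an even number of negative entries and hence lies in $D_n$. Equivalently, applying equation~\eqref{relateBD} to the triple $f_1 = \ofmaj$, $f_2 = \odes$, $f_3 = \oneg$ (and separately to the even analogues) yields
\[
\sum_{\sigma \in D_n} x^{\oDmaj(\sigma)} y^{\odes_D(\sigma)} z^{\oneg_D(\sigma)} = \sum_{\pi \in B_n^+} x^{\ofmaj(\pi)} y^{\odes(\pi)} z^{\oneg(\pi)},
\]
and analogously for the even statistics. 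Thus the task reduces to computing the generating functions over $B_n^+$.

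I would then compute these by induction on $n$, mirroring the recursive argument used in the proof of Theorem~\ref{trivialB} but with the last entry required to be positive. The induction naturally splits along the parity of $n$, and behaves differently for the odd- and even-indexed statistics; in each of the four sub-cases one either conditions on $\pi(n) = j \in [n]$ and flattens the first $n-1$ entries to some $\tau \in B_{n-1}$ (this occurs when position $n-1$ lies in the \emph{opposite} parity class of the statistic under consideration, so that no descent at $n-1$ contributes), or else conditions on the pair $(\pi(n-1), \pi(n)) = (i, j)$ with $j > 0$ and $i \neq \pm j$ and flattens the first $n-2$ entries to $\tau \in B_{n-2}$. In the first type of case one obtains $\sum_{B_n^+} = n \sum_{B_{n-1}}$; in the second, summing the weights over the three surviving ranges $0<i<j$, $0<j<i$, and $i<0<j$ with $i \neq -j$ (of cardinalities $\binom{n}{2}$, $\binom{n}{2}$, and $n(n-1)$ respectively) gives a recursion of the form $\sum_{B_n^+} = \binom{n}{2}(1+2xz+yx^{k}) \sum_{B_{n-2}}$, where the exponent $k$ is dictated by how much a descent at position $n-1$ contributes to the relevant flag-major index. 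Feeding Theorem~\ref{trivialB} into the recursion then produces the claimed product formulas.

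The main obstacle is the parity bookkeeping. Each of $\odes, \omaj, \oneg, \edes, \emaj, \eneg$ depends on positions of a distinct parity, and the contribution of the last one or two entries of $\pi$ to each of these statistics depends delicately on the parity of $n$ itself. One has to verify, case by case, that flattening preserves the relative order of the remaining entries (and hence the descent set at each position) as well as the negation statistics (which depend on positions, not on values). Once this accounting is in place, each of the four induction steps becomes a short case analysis directly analogous to the one carried out in the proof of Theorem~\ref{trivialB}.
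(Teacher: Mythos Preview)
Your approach is essentially the same as the paper's. The paper also reduces to a sum over $B_n$ with the last entry made positive (equivalently, your $B_n^+$) via equation~\eqref{relateBD}, then performs exactly the same parity-dependent case analysis on the last one or two entries---removing $\sigma(n)$ alone when $n-1$ has the opposite parity from the statistic, and removing the pair $(\sigma(n-1),\sigma(n))$ otherwise---before invoking Theorem~\ref{trivialB}; the only cosmetic difference is that the paper sums over all of $B_n$ (both signs of $\sigma(n)$) and divides by $2$ at the end, whereas you restrict to $\sigma(n)>0$ from the start, which halves each case count up front.
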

\begin{proof}
Let $n \geq 3$ be odd.
Then
\begin{eqnarray*}
\sum_{ \sigma \in B_n}x^{\oDmaj(\sigma)}y^{\odes_D(\sigma)}z^{\oneg_D(\sigma)}
 &=& \sum \limits_{i\in [\pm n]}\sum_{\{ \sigma \in B_{n} : \, \sigma(n)=i \} }x^{\ofmaj(|\sigma|_n)}y^{\odes(|\sigma|_n)}z^{\oneg(|\sigma|_n)} \\
 &=& 2n \sum_{\tau \in B_{n-1}}x^{\ofmaj(\tau)}y^{\odes(\tau)}z^{\oneg(\tau)},
\end{eqnarray*}
and the result follows from Theorem \ref{trivialB} and (\ref{relateBD}).
Let now $n$ be even. Let $\sigma \in B_n$ and $[\tau,i,j]$ be its window notation ($i,j \in [\pm n]$).
Then we have that
\[
\oDmaj(\sigma)= \left\{
                 \begin{array}{ll}
                   \ofmaj(\tau), & \hbox{if $0< i< |j|$,} \\
                   \ofmaj(\tau)+1, & \hbox{if $i<0$,} \\
                   \ofmaj(\tau)+n, & \hbox{if $i>|j|$.} \\
                 \end{array}
               \right.
\]
Therefore
\begin{eqnarray*}
  \sum_{\sigma \in B_n}x^{\oDmaj(\sigma)}y^{\odes_D(\sigma)}z^{\oneg_D(\sigma)}
&=& \sum \limits_{0<i<|j|}\sum_{\tau \in B_{n-2}}x^{\ofmaj(\tau)}y^{\odes(\tau)}z^{\oneg(\tau)} \\
&&+\sum \limits_{\substack{i<0 \\ -i \neq |j|}}\sum_{\tau \in B_{n-2}}x^{\ofmaj(\tau)+1}y^{\odes(\tau)}z^{\oneg(\tau)+1}\\
&&+\sum \limits_{i>|j|}\sum_{\tau \in B_{n-2}}x^{\ofmaj(\tau)+n}y^{\odes(\tau)+1}z^{\oneg(\tau)}
\end{eqnarray*} $$ =n(n-1)\left(1+2zx+yx^{n}\right)\sum_{\tau \in B_{n-2}}x^{\ofmaj(\tau)}y^{\odes(\tau)}z^{\oneg(\tau)},
$$
and the result again follows from Theorem \ref{trivialB} and (\ref{relateBD}).

The proof for the even statistics is analogous and is therefore omitted.
\end{proof}

\subsection{The alternating character}

The computation for the character $\sigma \mapsto (-1)^{\ell_B(\sigma)}$
 of $S^B_n$ is considerably more involved.
We begin with the following reduction result.
For $n \in \PP$ we let
$$
D(B_n):= \{ \sigma \in B_n : |\sigma^{-1}(i) - \sigma^{-1}(i^{\ast}) | \leq 1  \, for \;  all \; i \in [n-1] \}.
$$
We call the elements of $D(B_n)$ {\em signed domino permutations}. So, for example, $[-3,-4,5,2,1] \in D(B_5)$ while
$ [3,-4,1,2] \notin D(B_4)$.

\begin{pro}
\label{Bdominored}
Let $n \in \PP$, and $S \subseteq [n]$. Then
$$
\sum_{\{ \sigma \in B_n : \Neg(\sigma)=S \}} (-1)^{\ell_B (\sigma)}
x_1^{\omaj(\sigma)} x_2^{\emaj(\sigma)} y_1^{\odes(\sigma)} y_2^{\edes(\sigma)}
$$
$$ =
\sum_{\{ \sigma \in D(B_n)  : \Neg(\sigma)=S \}} (-1)^{\ell_B (\sigma)}
x_1^{\omaj(\sigma)} x_2^{\emaj(\sigma)} y_1^{\odes(\sigma)} y_2^{\edes(\sigma)}.
$$
\end{pro}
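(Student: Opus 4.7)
The plan is to follow the template of the proof of Proposition~\ref{domino-odd}: I would build a sign-reversing, fixed-point-free involution on $\{\sigma \in B_n \setminus D(B_n) : \Neg(\sigma) = S\}$ that also preserves $\omaj$, $\emaj$, $\odes$, $\edes$. Once such an involution is in hand, the contributions to the LHS from $\sigma \notin D(B_n)$ cancel in pairs, and the identity follows.

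For each $\sigma \in B_n \setminus D(B_n)$, I would select $r := \min \{ i \in [n-1] : |\sigma^{-1}(i) - \sigma^{-1}(i^{*})| \geq 2 \}$ and first observe that $r$ must be odd. Indeed, if $r$ were even then $r^{*} = r - 1$ and $(r-1)^{*} = r$, so the defect condition at $r-1$ would coincide with the one at $r$, contradicting the minimality. Hence $r^{*} = r+1$, and the transposition $s_r^{*} = (r, r+1)(-r, -r-1)$ is exactly the simple Coxeter generator $s_r^B$, so $\ell_B(s_r^{*}) = 1$. I would then define $\iota(\sigma) := s_r^{*} \sigma$.

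The properties to verify are: (a) $\iota$ is a fixed-point-free involution on $B_n \setminus D(B_n)$ preserving $\Neg$; (b) $\ell_B(\iota(\sigma)) \equiv \ell_B(\sigma) + 1 \pmod 2$; and (c) $D(\iota(\sigma)) = D(\sigma)$, which in turn preserves $\omaj$, $\emaj$, $\odes$, $\edes$. For (a), $s_r^{*}$ acts on values without changing signs, giving $\Neg(\iota(\sigma)) = \Neg(\sigma)$; moreover, for $i < r$ both $i$ and $i^{*}$ lie outside $\{\pm r, \pm r^{*}\}$ and are fixed by $s_r^{*}$, so the non-defect status at all $i < r$ is untouched, while a direct computation $(s_r^{*}\sigma)^{-1}(r) = \sigma^{-1}(r^{*})$ and $(s_r^{*}\sigma)^{-1}(r^{*}) = \sigma^{-1}(r)$ shows $r$ remains a defect in $\iota(\sigma)$, hence the minimum one. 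Item (b) is immediate because $\ell_B \bmod 2$ is a homomorphism and $\ell_B(s_r^{*}) = 1$.

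The main obstacle will be (c). The descent at position $0$ is preserved because $\sigma(0) = 0$ is fixed and $s_r^{*}$ preserves the sign of $\sigma(1)$. For $i \in [n-1]$, one compares the pair $(\sigma(i), \sigma(i+1))$ with $(s_r^{*}(\sigma(i)), s_r^{*}(\sigma(i+1)))$. When at most one of these values belongs to $\{\pm r, \pm r^{*}\}$, the descent status is unchanged because $|r - r^{*}| = 1$ and the other value lies outside the two-element band $\{r, r^{*}\}$ (or its negative). When both values belong to $\{\pm r, \pm r^{*}\}$, the same-sign pairs $\{r, r^{*}\}$ and $\{-r, -r^{*}\}$ are excluded by the minimality of $r$ (each forces $|\sigma^{-1}(r) - \sigma^{-1}(r^{*})| = 1$), while for the remaining mixed-sign pairs $\{r, -r^{*}\}$ and $\{-r, r^{*}\}$ a direct inspection shows that the descent status is preserved under the swap. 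Since $\omaj$, $\emaj$, $\odes$, $\edes$ depend only on $D(\sigma)$, step (c) closes the argument.
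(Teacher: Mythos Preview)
Your proof is correct and follows exactly the same approach as the paper: both define the involution $\sigma \mapsto s_r^{*}\sigma$ with $r$ the minimal defect, and verify that it is sign-reversing, $\Neg$-preserving, and $D$-preserving. Your write-up simply fills in more of the details (notably the observation that $r$ must be odd, so $s_r^{*}=s_r^B$, and the case analysis for descent preservation); one tiny quibble is that the same-sign pairs $\{r,r^{*}\}$, $\{-r,-r^{*}\}$ are excluded by the defect condition at $r$ itself rather than by its minimality, but this does not affect the argument.
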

\begin{proof}
Let $\widehat{B_n} \eqdef \{ \sigma \in B_n : \Neg(\sigma)=S \}$, and $\varphi : \widehat{B_n} \setminus D(B_n) \rightarrow
\widehat{B_n} \setminus D(B_n)$ be defined by $\varphi(\sigma):=  s_r^*\sigma$ where $r:= \min\{ i \in [n-1] : |\sigma^{-1}(i) - \sigma^{-1}(i^{\ast}) | \geq 2 \}$ if $\sigma \in \widehat{B_n} \setminus D(B_n)$.
Then $\varphi : \widehat{B_n} \setminus D(B_n) \rightarrow
\widehat{B_n} \setminus D(B_n)$ is an involution, $\ell_B( \varphi(\sigma)) \equiv \ell_B(\sigma) +1
\pmod{2}$, and $D(\varphi(\sigma))=D(\sigma)$, so the result follows.
\end{proof}

Note that there is a bijection between $D(B_{2m})$ and $B_m \times {\cal P}([m])$ obtained by associating to each $\sigma \in B_m$ and $S \subseteq [m]$ the signed permutation $u \in B_{2m}$ defined by
\[
(u(2j-1),u(2j)):=
\left\{
\begin{array}{ll}
       (2 \sigma(j), 2 \sigma(j)-1), & \hbox{if $j \in S, \sigma(j)>0$,} \\
       (2 \sigma(j)-1, 2 \sigma(j)), & \hbox{if $j \notin S, \sigma(j)>0$,} \\
       (2 \sigma(j)+1, 2 \sigma(j)), & \hbox{if $j \in S, \sigma(j)<0$,} \\
       (2 \sigma(j), 2 \sigma(j)+1), & \hbox{if $j \notin S, \sigma(j)<0$,}
\end{array}
\right.
\]
for $j \in [m]$.
So, for example, if $\sigma=[3,-1,-2,5,-4]$ and $S = \{ 1,4,5 \}$
then $u=[6,5,-2,-1,-4,-3,10,9,-7,-8]$.
Because of this bijection we will often identify $D(B_{2m})$ and $B_m \times {\cal P}([m])$ and write simply $u=(\sigma, S)$ to mean that $u$ and $(\sigma, S)$ correspond under this bijection.
The proof of the next result is a routine check using our definitions,
and is therefore omitted.

\begin{lem}
\label{Btranslate}
Let $m \in \PP$, and $u \in D(B_{2m})$, $u=(\sigma, S)$.
Then
\begin{enumerate}
  \item $\negg(u)=2 \negg(\sigma)$, $\oneg(u)=\eneg(u)=\negg(\sigma)$;
  \item $\odes(u)=|S|$,  $\edes(u)=\des(\sigma)$;
  \item $\omaj(u)=\sum_{t \in S} t$, $\emaj(u)=\maj(\sigma)$;
  \item $\ell_A(u)=4 \ell_A(\sigma) + |S|$ and $\ell_B(u)=4 \ell_B(\sigma) + |S|- \negg(\sigma)$.
\end{enumerate}

\end{lem}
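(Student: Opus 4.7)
The plan is to verify each of the four claims directly from the case-by-case definition of the bijection $u=(\sigma,S)$. The key observation that drives everything is that for each $j\in[m]$ the two entries $\{u(2j-1),u(2j)\}$ occupy the pair of integers closest to $2\sigma(j)$: they are $\{2\sigma(j)-1,2\sigma(j)\}$ when $\sigma(j)>0$ and $\{2\sigma(j),2\sigma(j)+1\}$ when $\sigma(j)<0$, and in each case the \emph{larger} of the two sits in the odd-indexed slot $2j-1$ precisely when $j\in S$. Everything else is a consequence of this picture, together with the fact that the four ``value intervals'' associated to distinct $j$'s are pairwise disjoint.

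From this, item (1) is immediate: each $j\in\Neg(\sigma)$ contributes exactly one negative entry at an odd position and one at an even position of $u$. For item (2), a descent within a domino (at position $2j-1$) occurs iff the larger value is in front, i.e.\ iff $j\in S$, yielding $\odes(u)=|S|$. For even descents, I compare $u(2j)$ with $u(2j+1)$: because each of these lies within $1$ of $2\sigma(j)$ and $2\sigma(j+1)$ respectively, and $|2\sigma(j)-2\sigma(j+1)|\ge 2$, the sign of $u(2j)-u(2j+1)$ matches that of $\sigma(j)-\sigma(j+1)$; a small same-sign/opposite-sign case check shows no boundary coincidence can occur (it is ruled out by injectivity of $u$). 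The descent at $0$ is controlled by the sign of $u(1)$, which equals that of $\sigma(1)$. Hence $\edes(u)=\des(\sigma)$, and item (3) then follows by summing $\frac{(2j-1)+1}{2}=j$ over $j\in S$ for $\omaj(u)$, and $\frac{2j}{2}=j$ over $j\in D(\sigma)\cap[m-1]$ (with the position $0$ contributing zero to $\emaj$) for $\emaj(u)$.

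For item (4), I would split the set of pairs $(i,j)$ with $1\le i<j\le 2m$ contributing to $\ell_A(u)$ into within-domino pairs and between-domino pairs. The within-domino pairs contribute exactly $|S|$ by the analysis of item (2). For each $a<b$, the four cross pairs are either all inversions (when $\sigma(a)>\sigma(b)$) or none (when $\sigma(a)<\sigma(b)$), because the value intervals around $2\sigma(a)$ and $2\sigma(b)$ are separated by at least one integer. This yields $\ell_A(u)=4\ell_A(\sigma)+|S|$. For $\ell_B(u)$ I would compute
\[
\sum_{i\in\Neg(u)}u(i)=\sum_{j\in\Neg(\sigma)}\bigl(u(2j-1)+u(2j)\bigr)=\sum_{j\in\Neg(\sigma)}(4\sigma(j)+1)=4\sum_{i\in\Neg(\sigma)}\sigma(i)+\negg(\sigma),
\]
and then apply Proposition \ref{CombB} to both $u$ and $\sigma$ to conclude $\ell_B(u)=\ell_A(u)-\sum_{i\in\Neg(u)}u(i)=4\ell_B(\sigma)+|S|-\negg(\sigma)$.

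The only subtle point is the boundary check in item (2) confirming that consecutive dominoes never produce a tied comparison; this is a short case split handled automatically by the bijectivity of $u$, so it poses no real obstacle. The rest is bookkeeping from the four defining cases of the bijection.
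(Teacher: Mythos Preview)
Your argument is correct and is exactly the kind of routine verification the paper has in mind: the paper omits the proof entirely, stating only that it is ``a routine check using our definitions.'' Your case analysis of the domino pairs, the within/between-domino split for $\ell_A$, and the use of Proposition~\ref{CombB} to pass from $\ell_A$ to $\ell_B$ carry through without difficulty, and the boundary check you flag for item~(2) is indeed handled by the fact that distinct dominoes occupy disjoint value intervals.
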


For shortness, we define the following two monomials, for any $\sigma \in B_n$:
$ \sigma_o(x,y):=x^{\omaj(\sigma)} y^{\odes(\sigma)}$ and $\sigma_e(x,y):=x^{\emaj(\sigma)} y^{\edes(\sigma)}$.
We need to prove a further reduction result.

\begin{lem}
\label{Bmaxred}
Let $m \in \PP$, and $S \subseteq [2m+1]$. Then
$$
\sum_{\{ \sigma \in B_{2m+1} : \Neg(\sigma)=S \}} (-1)^{\ell_B(\sigma)}\sigma_o(x,y) =
\sum_{\{ \sigma \in B_{2m+1} : \Neg(\sigma)=S, |\sigma(2m+1)|=2m+1   \}}  (-1)^{\ell_B(\sigma)}\sigma_o(x,y),
$$ and
$$
\sum_{\{ \sigma \in B_{2m+1} : \Neg(\sigma)=S \}} (-1)^{\ell_B (\sigma)} \sigma_e(x,y) =
\sum_{\{ \sigma \in B_{2m+1} : \Neg(\sigma)=S, |\sigma(1)|=2m+1   \}} (-1)^{\ell_B (\sigma)}
\sigma_e(x,y).
$$
\end{lem}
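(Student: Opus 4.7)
The plan is to prove each identity by exhibiting a sign-reversing involution on the set of $\sigma$ violating the positional constraint, chosen so as to preserve $\Neg$ together with the relevant (odd or even) descent statistics, making those terms cancel in pairs.

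For the first identity, set $\widetilde{B} := \{\sigma \in B_{2m+1} : \Neg(\sigma) = S,\ |\sigma(2m+1)| \neq 2m+1\}$. Given $\sigma \in \widetilde{B}$, let $k$ be the unique element of $[2m]$ with $|\sigma(k)| = 2m+1$, let $v := |\sigma(k^*)|$ (so $v \in [2m]$ since $k^* \neq k$, and thus $v^* \in [2m]$), and define $\phi(\sigma) := s_v^* \sigma$. Since $s_v^*$ equals a simple reflection ($s_v$ if $v$ is odd, $s_{v-1}$ if $v$ is even), fixes $\pm(2m+1)$, and exchanges $v \leftrightarrow v^*$ and $-v \leftrightarrow -v^*$ without altering any sign, $\phi$ maps $\widetilde{B}$ into itself, preserves $\Neg$, and reverses the parity of $\ell_B$. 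Combining $(v^*)^* = v$ with $|\phi(\sigma)(k^*)| = v^*$ yields $\phi \circ \phi = \mathrm{id}$.

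The crux is that $\phi$ preserves $\omaj$ and $\odes$. Let $p$ be the unique position with $|\sigma(p)| = v^*$; the sequences $\sigma$ and $\phi(\sigma)$ agree outside positions $k^*$ and $p$. One checks that $p \notin \{k, k^*\}$ (the case $p = k$ would force $v^* = 2m+1$, impossible since $v^* \in [2m]$), so $k^*$ and $p$ lie in distinct ``odd-pair cells'' $\{2t-1, 2t\} \subseteq [2m]$. Because $|v - v^*| = 1$ and no integer lies strictly between $v$ and $v^*$, for every position $j \notin \{k^*, p\}$ the value $\sigma(j)$ compares in the same way with $\pm v$ and with $\pm v^*$. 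Hence a descent of $\sigma$ can flip only at the position $\min(k^*, p)$, and only when $k^*$ and $p$ are adjacent integers; but adjacency in distinct odd-pair cells forces $\{k^*, p\} = \{2t, 2t+1\}$, so the possibly-flipped descent sits at the \emph{even} position $2t$ and affects neither $\omaj$ nor $\odes$.

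The second identity follows by the same strategy using the ``even-pair'' structure $\{2t, 2t+1\}$ in place of the odd-pair structure: given $\sigma$ with $\Neg(\sigma) = S$ and $|\sigma(1)| \neq 2m+1$, let $k \in [2m+1] \setminus \{1\}$ be the position of $\pm(2m+1)$, set $k^\# := k+1$ if $k$ is even and $k^\# := k-1$ if $k$ is odd, let $v := |\sigma(k^\#)|$, and put $\phi(\sigma) := s_v^* \sigma$. The verification that $\phi$ is a sign-reversing involution preserving $\Neg$ is identical to the first case, and in the descent analysis the adjacency of $k^\#$ and $p$ in distinct even pairs now forces $\{k^\#, p\} = \{2t+1, 2t+2\}$, so the only possible flip occurs at the \emph{odd} position $2t+1$, which affects neither $\emaj$ nor $\edes$. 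The hard part is the descent-preservation step: one must track which descents can move when $v$ and $v^*$ are swapped and verify that the parity of the possibly-flipped position matches the statistic being tracked.
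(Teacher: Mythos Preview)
Your argument is correct: the involution $\phi(\sigma)=s_v^*\sigma$ with $v=|\sigma(k^*)|$ (resp.\ $v=|\sigma(k^\#)|$) is sign-reversing, preserves $\Neg$, and your cell analysis correctly shows that the only descent which can flip lies at an even (resp.\ odd) position. The delicate point---that the two changed positions $k^*$ and $p$ sit in different parity cells, so their adjacency forces the flip to occur at a position of the harmless parity---is handled properly.

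The paper, however, uses a noticeably simpler involution. For the first identity it sets $\psi(\sigma)=s_r^*\sigma$ with $r=\sigma(2m+1)$: the two window positions whose values change are then $2m+1$ itself and the position $p$ carrying $\pm r^*$, and since every other value $\sigma(j)$ satisfies $|\sigma(j)|\notin\{|r|,|r^*|\}$, the only descent that can possibly flip is the one at position $2m$ (namely when $p=2m$), which is even. No cell decomposition is needed; the parity of the last boundary does all the work. The second identity is handled analogously with $r=\sigma(1)$, the only possibly-flipped descent sitting at the odd position $1$. Your route reaches the same conclusion but carries an extra object (the position $k$ of the extreme value) through the argument; the paper avoids this by swapping the value at the fixed endpoint rather than hunting for the extreme value and then moving to its partner.
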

\begin{proof}
Let $\psi : \{ \sigma \in B_{2m+1} : |\sigma(2m+1)| \neq 2m+1 \} \rightarrow \{ \sigma \in B_{2m+1} : |\sigma(2m+1)| \neq 2m+1 \}$ be defined by $\psi(\sigma):=  s_r^* \sigma$ where $r:= \sigma(2m+1)$.
Then $\psi$ is an involution, $\ell_B( \psi(\sigma)) \equiv \ell_B(\sigma) +1 \pmod{2}$, $\Neg(\psi(\sigma))=\Neg(\sigma)$ and $D(\sigma)_o=D(\psi(\sigma))_o$. This proves the
first equation. The proof for the even one is analogous is therefore omitted.

%
%
\end{proof}

For $S\subseteq [n]$ we define $S^*:=\{i^*:i \in S\}$.
Using the reductions proved so far we can now compute explicitly several ``building blocks''
of the generating functions that we are interested in.
\begin{lem}
\label{evenneg}
Let $m \in \PP$, and $S \subseteq [2m]$. Then
\[
\sum_{\{ \sigma \in B_{2m} : \Neg(\sigma)=S \}}
(-1)^{\ell_B(\sigma)}\sigma_o(x,y)=
\left\{
\begin{array}{ll}
       (-1)^{\frac{|S|}{2}} m! \displaystyle \prod_{i=1}^{m} (1-yx^i), & \hbox{if $S=S^*$,} \\
       0, & \hbox{otherwise,}
\end{array}
\right.
\]
and
$$
\sum_{\{ \sigma \in B_{2m} : \Neg(\sigma)=S \}}
(-1)^{\ell_B(\sigma)}\sigma_e(x,y)=0.
$$
\end{lem}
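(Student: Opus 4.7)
The plan is to apply Proposition~\ref{Bdominored} to restrict both sums to signed domino permutations, and then exploit the bijection $D(B_{2m}) \leftrightarrow B_m \times \mathcal{P}([m])$ together with Lemma~\ref{Btranslate} to factor them completely. First I would invoke Proposition~\ref{Bdominored}, which replaces each sum over $\{\sigma \in B_{2m} : \Neg(\sigma)=S\}$ by the corresponding sum over $\{u \in D(B_{2m}) : \Neg(u) = S\}$ without altering its value. The next step is a structural observation about $\Neg$: if $u=(\sigma,T)$ under the bijection, then the defining formulas show that the pair $(u(2j-1),u(2j))$ consists of two positive entries when $\sigma(j) > 0$ and of two negative entries when $\sigma(j) < 0$. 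Hence $\Neg(u) = \bigcup_{k \in \Neg(\sigma)}\{2k-1,2k\}$, which is always $*$-invariant since the $*$-orbits in $[2m]$ are exactly the pairs $\{2k-1,2k\}$. In particular, if $S \neq S^*$ then the summation set is empty and both identities hold trivially with value $0$.

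Now assume $S=S^*$ and write $S = \bigcup_{k\in A}\{2k-1,2k\}$ for a unique $A \subseteq [m]$, so that $|S|=2|A|$. Under the bijection the condition $\Neg(u)=S$ becomes $\Neg(\sigma)=A$, with $T$ still ranging freely over $\mathcal{P}([m])$. Applying Lemma~\ref{Btranslate} I substitute $\ell_B(u) \equiv |T|+|A| \pmod{2}$ (since $4\ell_B(\sigma)$ is even and $-\negg(\sigma) \equiv |A|$), together with $\omaj(u)=\sum_{t\in T} t$, $\odes(u)=|T|$, $\emaj(u)=\maj(\sigma)$, and $\edes(u)=\des(\sigma)$, so the $\sigma$-sum and the $T$-sum separate.

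For the odd identity, the $T$-sum becomes $\sum_{T\subseteq[m]}(-1)^{|T|} x^{\sum_{t\in T} t} y^{|T|} = \prod_{i=1}^{m}(1-yx^i)$, while the $\sigma$-sum is simply the number of $\sigma \in B_m$ with $\Neg(\sigma)=A$, which equals $m!$. Combined with the overall sign $(-1)^{|A|}=(-1)^{|S|/2}$ this delivers the stated product. For the even identity the $T$-variable no longer appears in the statistics $\emaj, \edes$, so the $T$-sum collapses to $\sum_{T\subseteq[m]}(-1)^{|T|}$, which vanishes since $m\ge 1$. The only delicate point in this plan is the parity bookkeeping for $(-1)^{\ell_B(u)}$ via Lemma~\ref{Btranslate}; once that is pinned down, the rest is a routine factorization.
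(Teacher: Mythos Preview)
Your argument is correct. For the odd identity it coincides with the paper's proof: both invoke Proposition~\ref{Bdominored} to pass to $D(B_{2m})$, observe that $\Neg(u)$ is $*$-invariant for any signed domino permutation (so the sum vanishes when $S\neq S^*$), and then use Lemma~\ref{Btranslate} to factor the sum over $B_m\times\mathcal{P}([m])$.

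For the even identity you take a different route from the paper. The paper disposes of this case with a direct sign-reversing involution $\sigma\mapsto s_{|\sigma(1)|}^{*}\sigma$ on all of $\{\sigma\in B_{2m}:\Neg(\sigma)=S\}$, which preserves $\emaj$ and $\edes$ while flipping $(-1)^{\ell_B}$; this handles every $S$ at once without any reduction to dominoes. You instead reuse the domino machinery: after Proposition~\ref{Bdominored} and the bijection, the statistics $\emaj,\edes$ depend only on $\sigma\in B_m$, so the free sum $\sum_{T\subseteq[m]}(-1)^{|T|}=0$ kills everything. Your approach is more uniform with the odd case and makes the vanishing completely transparent; the paper's involution is shorter and self-contained, avoiding Lemma~\ref{Btranslate} for this half of the lemma. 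Both are valid.
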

\begin{proof}
Note that, by Proposition \ref{Bdominored},
\[
\sum_{\{ \sigma \in B_{2m} : \Neg(\sigma)=S \}} (-1)^{\ell_B (\sigma)} \sigma_o(x,y) =
\sum_{\{ \sigma \in D(B_{2m})  : \Neg(\sigma)=S \}} (-1)^{\ell_B (\sigma)} \sigma_o(x,y),
\]
and the right hand side is zero whenever $S\neq S^*$. If $S=S^*$,
by Lemma \ref{Btranslate} we have that
\begin{eqnarray*}
\sum \limits_{\{ \sigma \in D(B_{2m}) : \Neg(\sigma)=S \}}
(-1)^{\ell_B(\sigma)}\sigma_o(x,y)
& = &
\sum_{\{ \sigma \in B_m : \Neg(\sigma)=S_e/2  \}}
\sum_{T \subseteq [m]}
(-1)^{|T|+\negg(\sigma)} x^{\sum\limits_{t \in T} t} y^{|T|} \\
& = &
\sum_{\{ \sigma \in B_m : \Neg(\sigma)=S_e/2  \}}
(-1)^{\negg(\sigma)}
\sum_{T \subseteq [m]}(-y)^{|T|}
 x^{\sum\limits_{t \in T} t}  \\
& = &
(-1)^{|S|/2} m! \prod_{i=1}^{m} (1-yx^i),
\end{eqnarray*} as claimed.

For the even statistics note that the assignment $\sigma \mapsto s_{|\sigma(1)|}^*\sigma$ defines an involution of $B_{2m}$ which preserves the functions $\emaj$
and $\edes$, so the result follows.
\end{proof}

\begin{lem}
\label{even-odd-NEG}
  Let $m\in \mathbb{P}$ and $S \subseteq [2m+1]$. Then
$$\sum_{\{ \sigma \in B_{2m+1} : \Neg(\sigma)=S \}} (-1)^{\ell_B (\sigma)} \sigma_e(x,y)=$$ $$=\left\{
                                                                                           \begin{array}{ll}
                                                                                             y(-1)^{\frac{|S|+1}{2}} m! \displaystyle \prod_{i=1}^{m} (1-yx^i), & \hbox{if $1\in S$ and $(S\setminus \{1\}-1)^*=S\setminus \{1\}-1$,} \\
                                                                                             (-1)^{\frac{|S|}{2}} m! \displaystyle \prod_{i=1}^{m} (1-yx^i), & \hbox{if $1\not\in S$ and $(S-1)^*=S-1$,} \\
                                                                                             0, & \hbox{otherwise.}
                                                                                           \end{array}
 \right.
$$
\end{lem}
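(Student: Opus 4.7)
The plan is to reduce the sum to a sum over $B_{2m}$ to which Lemma \ref{evenneg} can be applied. By Lemma \ref{Bmaxred} (even case), we may restrict the sum to those $\sigma \in B_{2m+1}$ with $\Neg(\sigma)=S$ and $|\sigma(1)|=2m+1$. Such a $\sigma$ is determined by $\varepsilon := \mathrm{sgn}(\sigma(1)) \in \{\pm 1\}$ together with the signed permutation $\tau \in B_{2m}$ defined by $\tau(i) := \sigma(i+1)$ for $i \in [2m]$; the two cases $\varepsilon=+1$ and $\varepsilon=-1$ correspond respectively to $1 \notin S$ and $1 \in S$.

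Next, I would work out how the relevant statistics behave under this decomposition. A direct computation shows
\[
\Neg(\sigma) \;=\; \begin{cases} \Neg(\tau)+1, & \varepsilon=+1, \\ \{1\} \cup (\Neg(\tau)+1), & \varepsilon=-1, \end{cases}
\]
so that $\Neg(\sigma)=S$ translates into $\Neg(\tau) = S-1$ when $1 \notin S$ and into $\Neg(\tau) = (S\setminus\{1\})-1$ when $1 \in S$. Computing $\ell_A$ by considering the $2m$ pairs $(1,j)$ separately gives $\ell_A(\sigma)=2m+\ell_A(\tau)$ if $\varepsilon=+1$ and $\ell_A(\sigma)=\ell_A(\tau)$ if $\varepsilon=-1$; combining with Proposition \ref{CombB} yields $\ell_B(\sigma) \equiv \ell_B(\tau) \pmod 2$ in the first case and $\ell_B(\sigma) \equiv \ell_B(\tau)+1 \pmod 2$ in the second.

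For the descent statistics I would argue as follows. Since $|\sigma(1)|=2m+1$ is the largest absolute value, one checks that $D(\sigma)=\{1\} \cup \{j+1 : j \in D(\tau),\, j \geq 1\}$ in the case $\varepsilon=+1$ and $D(\sigma)=\{0\} \cup \{j+1 : j \in D(\tau),\, j \geq 1\}$ in the case $\varepsilon=-1$. Intersecting with the even integers, in both cases one obtains $D(\sigma)_e = \{j+1 : j \in D(\tau)_o\}$ in case $\varepsilon=+1$ and $\{0\} \cup \{j+1 : j \in D(\tau)_o\}$ in case $\varepsilon=-1$. This yields
\[
\sigma_e(x,y) \;=\; \begin{cases} \tau_o(x,y), & \varepsilon=+1, \\ y\,\tau_o(x,y), & \varepsilon=-1, \end{cases}
\]
since the extra descent at $0$ contributes $0$ to $\emaj$ but $1$ to $\edes$.

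Putting these pieces together, the sum in the lemma splits into (at most) one of the two cases according to whether $1 \in S$, and reduces to
\[
\sum_{\{\tau \in B_{2m} : \Neg(\tau)=S-1\}} (-1)^{\ell_B(\tau)} \tau_o(x,y) \quad\text{or}\quad -y \sum_{\{\tau \in B_{2m} : \Neg(\tau)=(S\setminus\{1\})-1\}} (-1)^{\ell_B(\tau)} \tau_o(x,y).
\]
Applying Lemma \ref{evenneg} to each of these sums (with the set $T$ equal to $S-1$ or $(S\setminus\{1\})-1$ respectively, noting $|T|=|S|$ or $|S|-1$) gives exactly the three cases in the statement. The main obstacle is really the bookkeeping of parities: tracking how the sign of $\sigma(1)$ flips $\ell_B$ modulo $2$, how it shifts $\Neg$, and how it introduces or suppresses the descent at position $0$ (which is what produces the factor of $y$ and the shift from $(-1)^{|S|/2}$ to $(-1)^{(|S|+1)/2}$ in the $1\in S$ case).
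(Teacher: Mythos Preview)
Your proposal is correct and follows essentially the same approach as the paper's proof: both apply Lemma \ref{Bmaxred} to restrict to $|\sigma(1)|=2m+1$, strip off the first coordinate to reduce to a sum of $(-1)^{\ell_B(\tau)}\tau_o(x,y)$ over $\tau \in B_{2m}$ with the appropriate shifted negative set, and then invoke Lemma \ref{evenneg}. The paper carries this out explicitly only for the case $1\in S$ (where $\sigma(1)=-2m-1$) and declares the case $1\notin S$ analogous, whereas you spell out the bookkeeping for both cases; the content is the same.
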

\begin{proof}
  Let $1\in S$. Then $0 \in D(\sigma)$ for all $\sigma \in B_n$ such that $\Neg(\sigma)=S$. We have that, by Lemma \ref{Bmaxred},
\begin{eqnarray*}
  \sum_{\{ \sigma \in B_{2m+1} : \Neg(\sigma)=S \}} (-1)^{\ell_B (\sigma)} \sigma_e(x,y) &=&  
   \sum_{ \substack{\{\sigma \in B_{2m+1}: \\ \Neg(\sigma)=S,\, \sigma(1)=-2m-1 \}}} (-1)^{\ell_B (\sigma)} \sigma_e(x,y) \\
   &=&  -y\sum_{\substack{ \{\sigma \in B_{2m}: \\ \Neg(\sigma)=S\setminus\{1\}-1\}}} (-1)^{\ell_B (\sigma)} \sigma_o(x,y),
\end{eqnarray*} and the result follows by Lemma \ref{evenneg}.
If $1\not \in S$ the result follows by analogous computations.
\end{proof}

Note that if $|S|$ is even (resp., odd) the first (resp., second) case in the above lemma cannot occur.  We can now prove one of our main results. Recall that we have defined $p_n:=\frac{1+(-1)^n}{2}$.
\begin{thm}
\label{fourcorners}
Let $n \geqslant 2$. Then

\begin{equation} \label{new+}
\sum_{\substack{\{ \sigma \in B_{n}  :\, \sigma(n)>0 \\ \negg(\sigma) \equiv \epsilon \modue \}}}
(-1)^{\ell_B(\sigma)}x^{\omaj(\sigma)} y^{\odes(\sigma)}z_1^{\oneg(\sigma)}
z_2^{\eneg(\sigma)} = p_\epsilon \mathcal{O}_n(x,y,z_1,z_2),
\end{equation}

\begin{equation} \label{new-}
\sum_{\substack{\{ \sigma \in B_{n}  :\, \sigma(n)<0 \\ \negg(\sigma) \equiv \epsilon \modue \}}}
(-1)^{\ell_B(\sigma)}x^{\omaj(\sigma)} y^{\odes(\sigma)}z_1^{\oneg(\sigma)}
z_2^{\eneg(\sigma)} = -z_1z_2^{1-\epsilon}p_{n+\epsilon} \mathcal{O}_n(x,y,z_1,z_2),
\end{equation}

%
%
%
where $\epsilon \in \{0,1\}$ and $\mathcal{O}_n(x,y,z_1,z_2):=\left\lfloor \frac{n}{2} \right\rfloor !
    (1-z_1 z_2)^{\left\lfloor \frac{n-1}{2} \right\rfloor}
    \displaystyle \prod_{i=1}^{\left\lfloor \frac{n}{2} \right\rfloor} (1-yx^i)$.
\end{thm}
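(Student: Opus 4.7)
The plan is to split each sum according to $S := \Neg(\sigma)$ and, when $n$ is odd, also according to the value of $\sigma(n)$, and then invoke Lemma~\ref{evenneg}, together with a short argument based on Lemma~\ref{Bmaxred} in the odd case.

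First I would handle $n = 2m$ even. By Lemma~\ref{evenneg}, only $S$ with $S = S^*$ contributes to $\sum_{\Neg(\sigma)=S}(-1)^{\ell_B(\sigma)}\sigma_o(x,y)$, with value $(-1)^{|S|/2} m! \prod_{i=1}^{m}(1-yx^i)$. The condition $S = S^*$ forces $S$ to be a union of the dominoes $\{2i-1, 2i\}$, $i \in [m]$, so $|S|$ is automatically even, which collapses the $\epsilon = 1$ cases (matching $p_1 = p_{n+1} = 0$). For $\epsilon = 0$ the sign of $\sigma(n)$ is governed by the last domino: $\sigma(n) > 0$ forbids $\{2m-1, 2m\} \subseteq S$, leaving the remaining $m-1$ dominoes independent and contributing $(1 - z_1 z_2)^{m-1}$, whereas $\sigma(n) < 0$ forces inclusion of the last domino, contributing an extra factor of $-z_1 z_2$. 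Multiplying by $m! \prod_{i=1}^{m}(1-yx^i)$ yields $\mathcal{O}_n$ and $-z_1 z_2 \mathcal{O}_n$, as claimed.

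For $n = 2m+1$ odd, Lemma~\ref{Bmaxred} reduces the sum to $\sigma$ with $|\sigma(n)| = n$, so the sign of $\sigma(n)$ is forced by the choice $\sigma(n) = n$ or $\sigma(n) = -n$. If $\sigma(n) = n$, writing $\sigma = [\tau, n]$ with $\tau \in B_{n-1}$ one checks $\ell_B(\sigma) = \ell_B(\tau)$, $\Neg(\sigma) = \Neg(\tau)$, and $D(\sigma) = D(\tau)$, so the sum reduces to the $B_{2m}$ evaluation with the extra constraint $n \notin S$; all $m$ dominoes on $[2m]$ are now free, producing $(1-z_1 z_2)^m$ and hence $\mathcal{O}_{2m+1}$. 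If $\sigma(n) = -n$, writing $\sigma = [\tau, -n]$ a direct length computation using Proposition~\ref{CombB} gives $\ell_B(\sigma) = \ell_B(\tau) + 2n - 1$ (so the sign flips) and $D(\sigma) = D(\tau) \cup \{n-1\}$; since $n-1 = 2m$ is even, the extra descent is invisible to $\omaj$ and $\odes$. Summing over the allowed $S$ (with $n \in S$ and $S \setminus \{n\}$ a union of dominoes on $[2m]$) produces an extra factor $z_1$ coming from the odd index $n$, yielding $-z_1 \mathcal{O}_n$; since $|S|$ is now odd this corresponds to $\epsilon = 1$, matching $p_{n+\epsilon} = p_{2m+2} = 1$.

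Finally, collecting the four subcases (sign of $\sigma(n)$ crossed with parity of $\negg(\sigma)$) and matching them against the factors $p_{\epsilon}$ and $p_{n+\epsilon}$ on the right-hand side is a routine verification. I expect the main obstacle to be the bookkeeping for $n$ odd with $\sigma(n) = -n$: one has to verify that the length shift is odd and that the new descent lands at an even position, both of which depend on $n = 2m+1$ and on the way $i^*$ pairs indices into dominoes.
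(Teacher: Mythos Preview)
Your proposal is correct and follows essentially the same route as the paper: split by $S=\Neg(\sigma)$, invoke Lemma~\ref{evenneg} for $n$ even, and for $n$ odd use Lemma~\ref{Bmaxred} (noting that $\sigma(n)>0 \Leftrightarrow n\notin S$, so the reduction forces $\sigma(n)=\pm n$ accordingly) before passing to $B_{n-1}$ and applying Lemma~\ref{evenneg}. The only difference is organizational---you treat the parity of $n$ as the outer case split while the paper runs through the four $(\epsilon,\operatorname{sign}\sigma(n))$ subcases with $n$ even/odd inside each---and your bookkeeping for the length shift and the new even descent at $n-1$ when $\sigma(n)=-n$ matches the paper's computation.
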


\begin{proof}
We write $\sigma_o(x,y,z_1,z_2):=x^{\omaj(\sigma)} y^{\odes(\sigma)}z_1^{\oneg(\sigma)} z_2^{\eneg(\sigma)}$ for shortness and
we begin by proving \eqref{new+} for $\epsilon=1$. Note that
\begin{eqnarray*}
\label{fc1}
\sum_{\{ \sigma \in B_{n} \setminus D_n : \sigma(n)>0 \}}
& (-1)^{\ell_B(\sigma)}& \sigma_o(x,y,z_1,z_2) = \\
& = &
\sum_{S \subseteq [n-1]} z_1^{|S_o|} z_2^{|S_e|}
\sum_{\{ \sigma \in B_{n} \setminus D_n : \Neg(\sigma)=S \}} (-1)^{\ell_B(\sigma)}\sigma_o(x,y) \\
& = &
\sum_{\{ S \subseteq [n-1] : |S| \equiv 1\modue \} }
z_1^{|S_o|} z_2^{|S_e|}
\sum_{\{ \sigma \in B_{n} : \Neg(\sigma)=S \}} (-1)^{\ell_B(\sigma)}\sigma_o(x,y).
\end{eqnarray*}
This already proves \eqref{new+} if $\epsilon=1$ and $n$ is even,
by Lemma \ref{evenneg}. Suppose now that $n$ is odd.
Then for all $S \subseteq [n-1]$ such that $|S| \equiv 1 \pmod{2}$
we have that, by Lemmas \ref{Bmaxred} and \ref{evenneg},
\begin{eqnarray*}
\sum_{\{ \sigma \in B_{n} : \Neg(\sigma)=S \}} (-1)^{\ell_B(\sigma)}\sigma_o(x,y)
& = &
\sum_{\{ \sigma \in B_{n} : \Neg(\sigma)=S, \sigma(n)=n \}} (-1)^{\ell_B(\sigma)}\sigma_o(x,y)
\\
& = &
\sum_{\{ \sigma \in B_{n-1} : \Neg(\sigma)=S \} } (-1)^{\ell_B(\sigma)}\sigma_o(x,y)=0,
\end{eqnarray*} and \eqref{new+} for $\epsilon=1$ again follows.

We now prove \eqref{new+} for $\epsilon=0$. Suppose first that $n$ is even.
Then proceeding as in the previous case and using Lemma \ref{evenneg}
we have that
\begin{eqnarray*}
\sum_{\{ \sigma \in D_n : \sigma(n)>0 \}}
&(-1)^{\ell_B(\sigma)}&\sigma_o(x,y,z_1,z_2)= \\
& = &
\sum_{\{ S \subseteq [n-1] : |S| \equiv 0 \modue \} }
z_1^{|S_o|} z_2^{|S_e|}
\sum_{\{ \sigma \in B_{n} : \Neg(\sigma)=S \}}
(-1)^{\ell_B(\sigma)}\sigma_o(x,y) \\
& = &
\sum_{\{ S \subseteq [n-1] : S=S^*\} }
z_1^{|S_o|} z_2^{|S_e|}
(-1)^{\frac{|S|}{2}} \left( \frac{n}{2} \right)! \displaystyle \prod_{i=1}^{n/2} (1-yx^i) \\
& = &
 \left( \frac{n}{2} \right)! \displaystyle \prod_{i=1}^{n/2}
(1-yx^i) \sum_{ \{S \subseteq [n-2] : S=S^* \} }
(-z_1 z_2)^{\frac{|S|}{2}},
\end{eqnarray*}
and \eqref{new+} follows in this case.
If $n$ is odd then by Lemma \ref{Bmaxred} we have similarly that
\begin{eqnarray*}
\sum_{\{ \sigma \in D_n : \sigma(n)>0 \}}
&(-1)^{\ell_B(\sigma)}&\sigma_o(x,y,z_1,z_2)= \\
& = &
\sum_{\{ S \subseteq [n-1] : |S| \equiv 0\modue \} }
z_1^{|S_o|} z_2^{|S_e|}
\sum_{\{ \sigma \in B_{n} : \Neg(\sigma)=S, \sigma(n)=n \}}
(-1)^{\ell_B(\sigma)}\sigma_o(x,y) \\
& = &
\sum_{\{ S \subseteq [n-1] : |S| \equiv 0\modue \} }
z_1^{|S_o|} z_2^{|S_e|}
\sum_{\{ \sigma \in B_{n-1} : \Neg(\sigma)=S \}}
(-1)^{\ell_B(\sigma)}\sigma_o(x,y) \\
& = &
\sum_{\{ S \subseteq [n-1] : S=S^* \} }
z_1^{|S_o|} z_2^{|S_e|}
(-1)^{\frac{|S|}{2}} \left( \frac{n-1}{2} \right)! \displaystyle \prod_{i=1}^{(n-1)/2} (1-yx^i) \\
& = &
\left\lfloor \frac{n}{2} \right\rfloor !
\displaystyle \prod_{i=1}^{\left\lfloor \frac{n}{2} \right\rfloor}
(1-yx^i) \sum_{T \subseteq [n-1]_o }
(-z_1 z_2)^{|T|},
\end{eqnarray*}
by Lemma \ref{evenneg} and \eqref{new+} again follows.

We now prove \eqref{new-} for $\epsilon=1$. We have that
\begin{eqnarray*}
\sum_{\{ \sigma \in B_{n} \setminus D_n : \sigma(n)<0 \}}
&(-1)^{\ell_B(\sigma)}&\sigma_o(x,y,z_1,z_2) \\
& =&
\sum_{\{ S \subseteq [n] : n \in S \} } z_1^{|S_o|} z_2^{|S_e|}
\sum_{\{ \sigma \in B_{n} \setminus D_n : \Neg(\sigma)=S \}} (-1)^{\ell_B(\sigma)}\sigma_o(x,y) \\
& =&
\sum_{\{ S \subseteq [n] : |S| \equiv 1\modue, n \in S \} }
z_1^{|S_o|} z_2^{|S_e|}
\sum_{\{ \sigma \in B_{n} : \Neg(\sigma)=S \}} (-1)^{\ell_B(\sigma)}\sigma_o(x,y),
\end{eqnarray*}
and this proves \eqref{new-} in this case by Lemma \ref{evenneg} if
$n$ is even. Suppose now that $n$ is odd. Then, from the previous equation
and Lemmas \ref{Bmaxred} and \ref{evenneg}, we have that

\begin{eqnarray*}
  \sum_{\{ \sigma \in B_{n} \setminus D_n : \sigma(n)<0 \}}&(-1)^{\ell_B(\sigma)}&\sigma_o(x,y,z_1,z_2) =  \\
  & = &
\sum_{\{ S \subseteq [n] : |S| \equiv 1\modue, n \in S \} }
z_1^{|S_o|} z_2^{|S_e|}
\sum_{\{ \sigma \in B_{n} : \Neg(\sigma)=S, \sigma(n)=-n \}}(-1)^{\ell_B(\sigma)}\sigma_o(x,y) \\
& = &
- \sum_{\{ T \subseteq [n-1] : |T| \equiv 0\modue \} }
z_1^{|T_o|+1} z_2^{|T_e|}
\sum_{\{ \sigma \in B_{n-1} : \Neg(\sigma)=T \}} (-1)^{\ell_B(\sigma)}\sigma_o(x,y) \\
& = &
- \sum_{\{ T \subseteq [n-1] : T=T^* \} }
z_1^{|T_o|+1} z_2^{|T_e|}
(-1)^{\frac{|T|}{2}} \left( \frac{n-1}{2} \right)!
\displaystyle \prod_{i=1}^{(n-1)/2} (1-yx^i) \\
& = &
-z_1 \sum_{\{ T \subseteq [n-1] : T=T^* \} }
(-z_1 z_2)^{\frac{|T|}{2}}
 \left( \frac{n-1}{2} \right)! \displaystyle \prod_{i=1}^{(n-1)/2} (1-yx^i) \\
& = &
-z_1 \left\lfloor \frac{n}{2} \right\rfloor!
\displaystyle \prod_{i=1}^{\left\lfloor \frac{n}{2} \right\rfloor}
(1-yx^i) \sum_{ S \subseteq [n-1]_o }
(-z_1 z_2)^{|S|}
\end{eqnarray*}
and \eqref{new-} follows if $\epsilon=1$.

Finally, if $n$ is even, then we obtain similarly that
\begin{eqnarray*}
\sum_{\{ \sigma \in D_n : \sigma(n)<0 \}}
&(-1)^{\ell_B(\sigma)}&\sigma_o(x,y,z_1,z_2)= \\
& = &
\sum_{\{ S \subseteq [n] : n \in S, |S| \equiv 0\modue \} }
z_1^{|S_o|} z_2^{|S_e|}
\sum_{\{ \sigma \in B_{n} : \Neg(\sigma)=S \}}
(-1)^{\ell_B(\sigma)}\sigma_o(x,y) \\
& = &
\sum_{\{ S \subseteq [n] : n \in S, S=S^* \} }
z_1^{|S_o|} z_2^{|S_e|}
(-1)^{\frac{|S|}{2}} \left( \frac{n}{2} \right)! \displaystyle \prod_{i=1}^{n/2} (1-yx^i) \\
& = &
 \left( \frac{n}{2} \right)! \displaystyle \prod_{i=1}^{n/2}
(1-yx^i) \sum_{T \subseteq [n-2]_o }
(-z_1 z_2)^{|T|+1}.
\end{eqnarray*}
and \eqref{new-}  follows for $\epsilon=0$.
If $n$ is odd then, by Lemmas \ref{Bmaxred} and \ref{evenneg}
\begin{eqnarray*}
\sum_{\{ \sigma \in D_n : \sigma(n)<0 \}}&
&(-1)^{\ell_B(\sigma)}\sigma_o(x,y,z_1,z_2)= \\
& = &
\sum_{\{ S \subseteq [n] : n \in S, |S| \equiv 0\modue \} }
z_1^{|S_o|} z_2^{|S_e|}
\sum_{\{ \sigma \in B_{n} : \Neg(\sigma)=S, \sigma(n)=-n \}}
(-1)^{\ell_B(\sigma)}\sigma_o(x,y)\\
& = &
- \sum_{\{ S \subseteq [n] : n \in S, |S| \equiv 0\modue \} }
z_1^{|S_o|} z_2^{|S_e|}
\sum_{\{ \tau \in B_{n-1} : \Neg(\tau)=S \setminus \{ n \} \}}
(-1)^{\ell_B(\tau)}\tau_o(x,y) \\
& = &
- \sum_{\{ T \subseteq [n-1] : |T| \equiv 1\modue \} }
z_1^{|T_o|+1} z_2^{|T_e|}
\sum_{\{ \tau \in B_{n-1} : \Neg(\tau)=T \}}
(-1)^{\ell_B(\tau)}\tau_o(x,y)= 0.
\end{eqnarray*}
This proves \eqref{new-} for $\epsilon=0$ and this concludes the proof.
\end{proof}

%
%

The following is the even analogue of the previous theorem. Its proof is similar and is therefore omitted.

\begin{thm}
\label{efourcorners}
Let $n\geqslant 2$. Then

\begin{equation}
\sum_{\substack{\{ \sigma \in B_{n}  :\, \sigma(n)>0 \\ \negg(\sigma) \equiv \epsilon \modue \}}}
(-1)^{\ell_B(\sigma)} x^{\emaj(\sigma)} y^{\edes(\sigma)}z_1^{\oneg(\sigma)}z_2^{\eneg(\sigma)} = \frac{(-yz_1)^{\epsilon}p_{n+1}}{1-z_1z_2}\mathcal{O}_n(x,y,z_1,z_2),
\end{equation}

\begin{equation}
\sum_{\substack{\{ \sigma \in B_{n}  :\, \sigma(n)<0 \\ \negg(\sigma) \equiv \epsilon \modue \}}}
(-1)^{\ell_B(\sigma)} x^{\emaj(\sigma)} y^{\edes(\sigma)}z_1^{\oneg(\sigma)}z_2^{\eneg(\sigma)} =-\frac{(-yz_1)^{\epsilon}z_1z_2p_{n+1}}{1-z_1z_2}\mathcal{O}_n(x,y,z_1,z_2).
\end{equation}
\end{thm}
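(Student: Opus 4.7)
The proof follows the same template as Theorem~\ref{fourcorners}, with the odd major-index weight $x^{\omaj(\sigma)}y^{\odes(\sigma)}$ systematically replaced by the even one $x^{\emaj(\sigma)}y^{\edes(\sigma)}$. The required building blocks are now the second assertion of Lemma~\ref{evenneg} and Lemma~\ref{even-odd-NEG}, in place of the first assertion of Lemma~\ref{evenneg} and the odd case of Lemma~\ref{Bmaxred}.

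Fix $\epsilon \in \{0,1\}$ and one of the two sign cases for $\sigma(n)$, and split the generating function as
\[
\sum_{S} z_1^{|S_o|} z_2^{|S_e|} \sum_{\{\sigma \in B_n\,:\,\Neg(\sigma) = S\}} (-1)^{\ell_B(\sigma)} x^{\emaj(\sigma)} y^{\edes(\sigma)},
\]
where $S$ ranges over subsets of $[n]$ of the appropriate parity and either containing or not containing $n$ according to the sign of $\sigma(n)$. If $n = 2m$ is even, the inner sum vanishes identically by the second assertion of Lemma~\ref{evenneg}, and both equations of the theorem reduce to the trivial identity $0=0$ (consistent with $p_{n+1}=0$ in this case).

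If $n = 2m+1$ is odd, Lemma~\ref{even-odd-NEG} shows that the inner sum vanishes unless $S$ satisfies a precise $*$-symmetry condition on either $S-1$ or $(S\setminus\{1\})-1$, and supplies its value. A separate analysis is required for $1 \in S$ versus $1 \notin S$, which together with the sign of $\sigma(n)$ accounts for the four cases in the statement. Since $i^{\ast} \in \{i-1,i+1\}$ for $1 \leq i \leq n-1$, the admissible $S$ are parametrized by the subsets $T$ of the $m-1$ two-element $*$-orbits inside $[2m-2]$; each such orbit contributes exactly one odd and one even element, so $z_1^{|S_o|} z_2^{|S_e|}$ collapses to $(z_1 z_2)^{|T|/2}$ times a monomial in $z_1,z_2$ recording whether $1$ and $n$ belong to $S$. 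Summing against the signed weight of Lemma~\ref{even-odd-NEG} produces the factor
\[
\sum_{k=0}^{m-1} \binom{m-1}{k} (-z_1 z_2)^k \;=\; (1-z_1 z_2)^{m-1} \;=\; \frac{\mathcal{O}_n(x,y,z_1,z_2)}{(1-z_1 z_2)\,m!\,\prod_{i=1}^{m}(1-yx^i)},
\]
and a direct check shows that the residual monomial is exactly $(-yz_1)^\epsilon$ in the $\sigma(n)>0$ case and $-z_1 z_2 (-yz_1)^\epsilon$ in the $\sigma(n)<0$ case, matching the right-hand sides of the theorem.

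The only real obstacle is the bookkeeping: the shift $S \mapsto S-1$ interchanges $|S_o|$ and $|S_e|$, and the forced presence or absence of the extremal elements $1$ and $n$ in $S$ feeds the asymmetric prefactors $(-yz_1)^{\epsilon}$ and $z_1 z_2$ appearing in the statement. Once this bookkeeping is done carefully in each of the four subcases, the argument is a purely mechanical transcription of the proof of Theorem~\ref{fourcorners}.
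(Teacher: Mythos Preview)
Your proof is correct and follows precisely the route the paper intends: the paper omits the argument as ``similar'' to that of Theorem~\ref{fourcorners}, and you correctly identify that the substitutes for the odd building blocks are the second assertion of Lemma~\ref{evenneg} (which kills the even-$n$ case outright, matching $p_{n+1}=0$) and Lemma~\ref{even-odd-NEG} for odd $n$.

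Two small points of bookkeeping to tighten. First, if $T$ denotes a subset of the $m-1$ two-element $*$-orbits in $[2m-2]$, then each chosen orbit contributes one odd and one even index to $S$, so the weight is $(z_1z_2)^{|T|}$, not $(z_1z_2)^{|T|/2}$; your displayed binomial sum $\sum_k\binom{m-1}{k}(-z_1z_2)^k=(1-z_1z_2)^{m-1}$ is nonetheless the correct one. Second, in the $\sigma(n)<0$ cases the condition $n=2m+1\in S$ together with the $*$-symmetry of the shifted set forces $2m\in S$ as well, so the ``residual monomial'' is not literally determined by whether $1$ and $n$ lie in $S$ alone---it also carries the automatic $z_2$ from this forced even element. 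Your stated prefactors $(-yz_1)^{\epsilon}$ and $-z_1z_2(-yz_1)^{\epsilon}$ are correct once this is taken into account.
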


As a consequence of Theorems \ref{fourcorners} and \ref{efourcorners} we can now easily obtain the generating function of
the odd flag-major index, the odd descent number, and the odd negative number, twisted by the alternating character of the hyperoctahedral group, and the corresponding even ones.
The following result is the odd and even analogue, for $y=z=1$, of \cite[Theorem 5.1]{AGR}.

\begin{cor}
\label{ell}
Let $n \geqslant 2$. Then
$$
\sum_{\sigma \in B_{n}} (-1)^{\ell_B (\sigma)} x^{\ofmaj(\sigma)} y^{\odes(\sigma)}  z^{\oneg(\sigma)} =
\left\lfloor  \frac{n}{2} \right\rfloor ! \, (1-xz)^{\lceil  \frac{n}{2} \rceil}
\prod_{i=1}^{\left\lfloor  \frac{n}{2} \right\rfloor} (1- y x^{2i}),
$$
and
$$\sum_{\sigma \in B_{n}} (-1)^{\ell_B (\sigma)} x^{\efmaj(\sigma)} y^{\edes(\sigma)}  z^{\eneg(\sigma)} = p_{n+1} \left\lfloor  \frac{n}{2} \right\rfloor!(1-xz)^{\left\lfloor  \frac{n}{2} \right\rfloor} \prod \limits_{i=0}^{\left\lfloor  \frac{n}{2} \right\rfloor} (1-yx^{2i}).
$$
\end{cor}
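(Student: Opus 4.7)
The plan is to derive both identities as direct specializations of Theorems \ref{fourcorners} and \ref{efourcorners}. The key observation is that the definitions of the odd and even flag-major indices
$$\ofmaj(\sigma) = 2\omaj(\sigma) + \oneg(\sigma), \qquad \efmaj(\sigma) = 2\emaj(\sigma) + \eneg(\sigma)$$
let us convert a $\ofmaj$-weighting into a weighting by $\omaj$ and $\oneg$ separately, namely
$$x^{\ofmaj(\sigma)} y^{\odes(\sigma)} z^{\oneg(\sigma)} = (x^2)^{\omaj(\sigma)} y^{\odes(\sigma)} (xz)^{\oneg(\sigma)} \, 1^{\eneg(\sigma)},$$
and analogously for the even case.

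For the first identity I would substitute $(x,y,z_1,z_2) \mapsto (x^2, y, xz, 1)$ in Theorem \ref{fourcorners}. Splitting $B_n$ according to whether $\sigma(n)>0$ or $\sigma(n)<0$, and summing over the parity $\epsilon \in \{0,1\}$ of $\negg(\sigma)$, the four pieces combine to
$$\sum_{\epsilon\in\{0,1\}} \bigl(p_\epsilon - xz \cdot p_{n+\epsilon}\bigr) \, \mathcal{O}_n(x^2, y, xz, 1) = (1-xz)\, \mathcal{O}_n(x^2, y, xz, 1),$$
using $p_0 + p_1 = 1$ and $p_n + p_{n+1} = 1$. Plugging in the definition of $\mathcal{O}_n$, the extra $(1-xz)$ factor merges with $(1-xz)^{\lfloor (n-1)/2 \rfloor}$ to yield $(1-xz)^{\lceil n/2 \rceil}$ (using $1 + \lfloor (n-1)/2 \rfloor = \lceil n/2 \rceil$), while $\prod_{i=1}^{\lfloor n/2 \rfloor}(1-y(x^2)^i)$ gives the required $\prod_{i=1}^{\lfloor n/2 \rfloor}(1-yx^{2i})$.

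For the second identity I would substitute $(x,y,z_1,z_2) \mapsto (x^2, y, 1, xz)$ in Theorem \ref{efourcorners}. Again splitting by the sign of $\sigma(n)$ and summing over $\epsilon$, the denominators $1 - z_1 z_2 = 1 - xz$ cancel against the difference of the two corner contributions $1 - xz$, producing an $\epsilon$-sum $\sum_\epsilon (-y)^\epsilon = 1 - y$ multiplied by $p_{n+1}\,\mathcal{O}_n(x^2,y,1,xz)$. Since $p_{n+1}$ is zero unless $n$ is odd, in the nontrivial case $\lfloor n/2 \rfloor = \lfloor (n-1)/2 \rfloor$, and the factor $(1-y)$ absorbs into the product as the $i=0$ term of $\prod_{i=0}^{\lfloor n/2 \rfloor}(1-yx^{2i})$, giving the stated formula.

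Since all the combinatorial work has already been done in Theorems \ref{fourcorners} and \ref{efourcorners}, no real obstacle remains; the only potentially error-prone step is keeping the bookkeeping of the parity factors $p_\epsilon, p_{n+\epsilon}, p_{n+1}$ and the identity $1 + \lfloor (n-1)/2 \rfloor = \lceil n/2 \rceil$ straight, together with the observation that a vanishing factor $p_{n+1}$ in the even case forces $n$ to be odd, which is precisely what permits the simplification $\lfloor (n-1)/2 \rfloor = \lfloor n/2 \rfloor$ in the exponent of $(1-xz)$.
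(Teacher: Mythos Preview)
Your proposal is correct and follows exactly the approach the paper intends: the paper states the corollary as an immediate consequence of Theorems \ref{fourcorners} and \ref{efourcorners}, and in the proof of the very next corollary (Corollary \ref{oDmaj}) it uses precisely your substitution $\sigma_o(x^2,y,zx,1)$, confirming that this specialization is the intended derivation. Your bookkeeping of the parity factors and of the identities $p_0+p_1=1$, $p_n+p_{n+1}=1$, and $1+\lfloor (n-1)/2\rfloor=\lceil n/2\rceil$ is accurate, as is the observation that the factor $p_{n+1}$ forces $n$ odd in the even case, reconciling the exponent $\lfloor (n-1)/2\rfloor$ with the stated $\lfloor n/2\rfloor$.
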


We note that Corollary \ref{ell} implies that $$\sum_{\sigma \in B_{n}} (-1)^{\ell_B (\sigma)} x^{\omaj(\sigma)} y^{\odes(\sigma)}=0$$
which is also implied by \cite[Theorem 3.2]{Reiner}.

 Note that $\sum_{\sigma \in B_{n}} (-1)^{\ell_B (\sigma)} x_{1}^{\omaj(\sigma)} x_{2}^{\emaj(\sigma)}$ does not factor
nicely, in general, for example, if $n=5$ then one obtains $(1-x_1)(1-x_1x_2)^2(1+x_2^2)(x_1^6x_2^4 - 2x_1^4x_2^2 + x_1^2x_2^4 + x_1^4 - 2x_1^2x_2^2 + 1)$.

As another simple consequence of Theorems \ref{fourcorners} and \ref{efourcorners}
we also obtain the generating function of $\oDmaj$, $\odes_D$ and $\oneg_D$, and the
corresponding even one for the only non-trivial one-dimensional character of the
even hyperoctahedral group. This result is the odd and even analogue, for $y=z=1$, of \cite[Theorem 4.8]{Biagioli-2006}.

\begin{cor}
\label{oDmaj}
Let $n \geqslant 2$. Then
$$
\sum_{\sigma \in D_{n}} (-1)^{\ell_D (\sigma)} x^{\oDmaj(\sigma)} y^{\odes_D(\sigma)} z^{\oneg_D(\sigma)}=
\left\lfloor  \frac{n}{2}  \right\rfloor !
(1-xz)^{\left\lfloor  \frac{n-1}{2} \right\rfloor}
\prod_{i=1}^{\left\lfloor  \frac{n}{2} \right\rfloor} (1-y x^{2i}),
$$
and
$$\sum_{\sigma \in D_{n}} (-1)^{\ell_D (\sigma)} x^{\eDmaj(\sigma)} y^{\edes_D(\sigma)}  z^{\eneg_D(\sigma)} =
                                                                                                            p_{n+1}\left\lfloor \frac{n}{2} \right \rfloor!(1+y)(1-xz)^{\left\lfloor  \frac{n-2}{2} \right\rfloor} \prod \limits_{i=1}^{\left\lfloor \frac{n}{2} \right \rfloor} (1-yx^{2i}).
$$
\end{cor}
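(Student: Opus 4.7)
The plan is to reduce both identities to Theorems \ref{fourcorners} and \ref{efourcorners} via the map $\sigma \mapsto \tau := |\sigma|_n$, which is a bijection from $D_n$ onto $\{\tau \in B_n : \tau(n) > 0\}$. Under this bijection every statistic appearing on the left hand side except the sign is, by definition, the corresponding $B$-statistic evaluated at $\tau$, so the only real issue will be to rewrite $(-1)^{\ell_D(\sigma)}$ in terms of $\tau$.

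Since $\negg(\sigma)$ is even on $D_n$, Proposition \ref{CombD} already gives $(-1)^{\ell_D(\sigma)} = (-1)^{\ell_B(\sigma)}$. When $\sigma(n) > 0$ one has $\sigma = \tau$ and there is nothing more to do. When $\sigma(n) < 0$, $\tau$ is obtained from $\sigma$ by negating its last entry; a short direct computation using Proposition \ref{CombB}, together with the fact that $\{|\tau(i)| : i < n\} = [n] \setminus \{k\}$ for $k := \tau(n) > 0$, yields
\[
\ell_B(\sigma) - \ell_B(\tau) = \bigl|\{i < n : -k < \tau(i) \leq k\}\bigr| + k = (k-1) + k = 2k - 1,
\]
which is odd. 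Since $\negg(\tau)$ is even in the first case and odd in the second, the two cases combine into the single clean identity
\[
(-1)^{\ell_D(\sigma)} = (-1)^{\ell_B(\tau)}\,(-1)^{\negg(\tau)}.
\]

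With this identity in hand, the left hand side of the odd formula in the corollary becomes a sum over $\tau \in B_n$ with $\tau(n) > 0$, which I will split according to the parity $\epsilon$ of $\negg(\tau)$. Using $\oDmaj(\sigma) = 2\omaj(\tau) + \oneg(\tau)$ and specializing equation (\ref{new+}) of Theorem \ref{fourcorners} by $(x, z_1, z_2) \mapsto (x^2, xz, 1)$ I obtain
\[
\sum_{\epsilon \in \{0,1\}} (-1)^\epsilon p_\epsilon \, \mathcal{O}_n(x^2, y, xz, 1) = \mathcal{O}_n(x^2, y, xz, 1),
\]
which is exactly the claimed right hand side. The even identity is handled in the same way using Theorem \ref{efourcorners} with $(x, z_1, z_2) \mapsto (x^2, 1, xz)$: summing the contributions $(-1)^\epsilon(-y)^\epsilon = y^\epsilon$ over $\epsilon \in \{0,1\}$ produces the expected $(1+y)$, and the prefactor $p_{n+1}/(1-xz)$ from the theorem combines with $\mathcal{O}_n(x^2, y, 1, xz)$ to give the stated formula. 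The apparent exponent mismatch between $\lfloor (n-1)/2 \rfloor - 1$ and $\lfloor (n-2)/2 \rfloor$ for $n$ even is harmless, since $p_{n+1} = 0$ kills the expression in that case.

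The main obstacle is the length computation in the second paragraph; once that sign identity is established, the rest is a routine specialization exercise, structurally parallel to how Corollary \ref{ell} is obtained from Theorems \ref{fourcorners} and \ref{efourcorners}.
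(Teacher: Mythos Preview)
Your proof is correct and follows essentially the same approach as the paper: both use the bijection $\sigma\mapsto|\sigma|_n$ from $D_n$ onto $\{\tau\in B_n:\tau(n)>0\}$, establish the parity relation $(-1)^{\ell_D(\sigma)}=(-1)^{\ell_B(\tau)+\negg(\tau)}$ (the paper states this as $(-1)^{\ell_B(\tilde\sigma)+1}$ on the piece $\sigma(n)<0$), and then invoke equation~(\ref{new+}) of Theorem~\ref{fourcorners} (resp.\ Theorem~\ref{efourcorners}) with the specialization $(x,z_1,z_2)\mapsto(x^2,xz,1)$ (resp.\ $(x^2,1,xz)$). Your explicit length computation $\ell_B(\sigma)-\ell_B(\tau)=2k-1$ is in fact more detailed than what the paper writes out.
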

\begin{proof}
For $\sigma \in D_n$ let $\tilde{\sigma} :=[\sigma(1), \ldots , \sigma(n-1), -\sigma(n)]$.
Then we have from our definitions, Proposition \ref{CombD} and Theorem \ref{fourcorners} that

\[
\begin{array}{ll}
\sum_{\sigma \in D_{n}} &(-1)^{\ell_D (\sigma)}x^{\oDmaj(\sigma)} y^{\odes_D(\sigma)} z^{\oneg_D(\sigma)}=\\
&=\displaystyle \sum_{\{ \sigma \in D_{n} : \sigma(n)>0 \}} (-1)^{\ell_B(\sigma)}
\sigma_o(x^2,y,zx,1)+\displaystyle \sum_{\{ \sigma \in D_{n} : \sigma(n)<0 \}} (-1)^{\ell_B(\tilde{\sigma})+1} \tilde{\sigma}_o(x^2,y,zx,1) \\
& = \displaystyle \left\lfloor \frac{n}{2} \right\rfloor !
(1-xz)^{\left\lfloor \frac{n-1}{2} \right\rfloor} \prod_{i=1}^{\left\lfloor \frac{n}{2} \right\rfloor} (1-y x^{2i}) - \displaystyle \sum_{\{ \tau \in B_n \setminus D_{n} : \tau(n)>0 \}} (-1)^{\ell_B(\tau)} \tau_o(x^2,y,zx,1) \\
& =\displaystyle \left\lfloor  \frac{n}{2} \right\rfloor !
(1-xz)^{\left\lfloor  \frac{n-1}{2} \right\rfloor}
\prod_{i=1}^{\left\lfloor  \frac{n}{2} \right\rfloor} (1-y x^{2i}).
\end{array}
\]
The second equation follows similarly from Theorem \ref{efourcorners}.
\end{proof}
We note that, as in the case of the hyperoctahedral group, $\sum \limits_{\sigma \in D_n}(-1)^{\ell_D(\sigma)}x_1^{\oDmaj(\sigma)}x_2^{\eDmaj(\sigma)}$ does not seem to factor, in general.

\subsection{The other characters}

We conclude by computing the generating function of the statistics
studied in this section twisted by the remaining one-dimensional characters of the hyperoctahedral
group.

As in the case of the alternating character, the following corollary can be deduced directly from
Theorems \ref{fourcorners} and \ref{efourcorners}. The next result gives the odd and even analogue, for $y=z=1$, of \cite[Theorem 6.1]{AGR}.

\begin{cor}
\label{ell+neg}
Let $n \geqslant 2$. Then
$$
\sum \limits_{ \sigma \in B_n} (-1)^{\ell_B(\sigma)+\negg(\sigma)}x^{\ofmaj(\sigma)} y^{\odes(\sigma)}z^{\oneg(\sigma)} = \left\lfloor  \frac{n}{2} \right\rfloor ! \, (1+xz)^{p_{n+1}}(1-xz)^{\lfloor  \frac{n}{2} \rfloor}
\prod \limits_{i=1}^{\left\lfloor  \frac{n}{2} \right\rfloor} (1- y x^{2i})
$$
and
$$\sum_{\sigma \in B_{n}} (-1)^{\ell_B (\sigma)+\negg(\sigma)} x^{\efmaj(\sigma)} y^{\edes(\sigma)}  z^{\eneg(\sigma)} =p_{n+1}\left\lfloor  \frac{n}{2} \right\rfloor!(1+y)(1-xz)^{\left\lfloor  \frac{n}{2} \right\rfloor} \prod \limits_{i=1}^{\left\lfloor  \frac{n}{2} \right\rfloor} (1-yx^{2i}).
$$
\end{cor}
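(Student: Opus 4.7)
The plan is to reduce the computation to Theorems \ref{fourcorners} and \ref{efourcorners} by a substitution of variables that absorbs the extra character $(-1)^{\negg}$ into the $z_1,z_2$ variables and rewrites the flag-major index via its definition. Since $\ofmaj(\sigma) = 2\omaj(\sigma)+\oneg(\sigma)$ and $(-1)^{\negg(\sigma)} = (-1)^{\oneg(\sigma)}(-1)^{\eneg(\sigma)}$, the summand on the left-hand side of the odd identity equals
\[
(-1)^{\ell_B(\sigma)}\,(x^2)^{\omaj(\sigma)}\, y^{\odes(\sigma)}\, (-xz)^{\oneg(\sigma)}\, (-1)^{\eneg(\sigma)},
\]
which is precisely the summand appearing in Theorem \ref{fourcorners} after the specialization $(x,y,z_1,z_2) \mapsto (x^2, y, -xz, -1)$. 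Under this specialization $1 - z_1 z_2 = 1 - xz$, and hence
\[
\mathcal{O}_n(x^2,y,-xz,-1) = \left\lfloor \tfrac{n}{2} \right\rfloor! \, (1-xz)^{\lfloor (n-1)/2 \rfloor} \prod_{i=1}^{\lfloor n/2 \rfloor}(1-yx^{2i}).
\]

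Next I would partition the sum over $B_n$ into the four blocks appearing in Theorem \ref{fourcorners}, indexed by the sign of $\sigma(n)$ and the parity of $\negg(\sigma)$, and evaluate each block by the corresponding formula. The $\sigma(n)>0$, $\negg$-odd block vanishes because of the factor $p_1=0$; exactly one of the $\sigma(n)<0$ blocks vanishes because of the factor $p_n$ (when $n$ is odd) or $p_{n+1}$ (when $n$ is even). The two surviving contributions sum to
\[
\bigl(1 + xz(p_{n+1}-p_n)\bigr)\,\mathcal{O}_n(x^2,y,-xz,-1).
\]
Since $p_{n+1}-p_n = -1$ when $n$ is even and $p_{n+1}-p_n = +1$ when $n$ is odd, the prefactor equals $1-xz$ in the first case and $1+xz$ in the second; combining it with $(1-xz)^{\lfloor (n-1)/2\rfloor}$ produces exactly the factor $(1+xz)^{p_{n+1}}(1-xz)^{\lfloor n/2 \rfloor}$ in the stated identity.

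The even analogue follows by the identical scheme, with Theorem \ref{efourcorners} replacing Theorem \ref{fourcorners}; the same substitution $(x^2,y,-xz,-1)$ works, and after the four-way split one reads off the answer from the $p_{n+1}$ factors appearing in Theorem \ref{efourcorners}. The only real bookkeeping hazard is to keep straight the two parity cases of $n$, since the exponent $\lfloor (n-1)/2 \rfloor$ inside $\mathcal{O}_n$ and the sign $p_{n+1}-p_n$ both switch with $n$; once these cases are verified separately the identity drops out mechanically, entirely parallel to the derivations of Corollaries \ref{ell} and \ref{oDmaj} given above.
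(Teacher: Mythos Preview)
Your approach is exactly what the paper intends: it states that Corollary \ref{ell+neg} ``can be deduced directly from Theorems \ref{fourcorners} and \ref{efourcorners}'' and gives no further details, and your derivation of the odd identity is correct and complete.

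There is, however, a slip in your even case. You write that ``the same substitution $(x^2,y,-xz,-1)$ works'', but it does not: since $\efmaj(\sigma)=2\emaj(\sigma)+\eneg(\sigma)$ and the variable $z$ is tracking $\eneg$ (not $\oneg$), the correct specialization in Theorem \ref{efourcorners} is $(x,y,z_1,z_2)\mapsto (x^2,y,-1,-xz)$, i.e.\ with the roles of $z_1$ and $z_2$ swapped. With your substitution the four blocks would sum to $p_{n+1}(1+yxz)\,\mathcal{O}_n$, which is wrong; with the corrected substitution one gets $-yz_1=y$ and the four contributions combine to
\[
\frac{p_{n+1}}{1-xz}\,(1+y-xz-yxz)\,\mathcal{O}_n(x^2,y,-1,-xz)=p_{n+1}(1+y)\,\mathcal{O}_n(x^2,y,-1,-xz),
\]
and since $p_{n+1}\neq 0$ forces $n$ odd, the exponent $\lfloor (n-1)/2\rfloor$ in $\mathcal{O}_n$ equals $\lfloor n/2\rfloor$, matching the stated formula. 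This is purely a bookkeeping correction; your method is otherwise right.
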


One can check that $\sum_{\sigma \in B_{n}} (-1)^{\ell_B (\sigma)+\negg(\sigma)} x_{1}^{\omaj(\sigma)} x_{2}^{\emaj(\sigma)}$ does not factor explicitly in general.

To calculate the generating function of the statistics
studied in this section twisted by the remaining character
we begin with a reduction result.

For $\sigma \in B_n$ we let $|\sigma| := [|\sigma(1)|, \ldots, |\sigma(n)|]$.

\begin{pro} \label{prop neg}
  Let $n\geqslant 2$ and $S\subseteq [n]$. Then
$$
\sum \limits_{\substack{\{\sigma \in B_n:\\\Neg(\sigma)_o=S\}}}(-1)^{\negg(\sigma)}x^{\omaj(\sigma)}y^{\odes(\sigma)}=
\sum \limits_{\substack{\{\sigma \in B_n:\\ \Neg(\sigma)_o=S \\ \odes(|\sigma|)=0\}}}(-1)^{\negg(\sigma)}x^{\omaj(\sigma)}y^{\odes(\sigma)}
$$ and
$$\sum \limits_{\substack{\{\sigma \in B_n:\\\Neg(\sigma)_e=S\}}}(-1)^{\negg(\sigma)}x^{\emaj(\sigma)}y^{\edes(\sigma)}=
\sum \limits_{\substack{\{\sigma \in B_n: \\ \Neg(\sigma)_e=S \\ \edes(|\sigma|)=0\}}}(-1)^{\negg(\sigma)}x^{\emaj(\sigma)}y^{\edes(\sigma)}.$$
\end{pro}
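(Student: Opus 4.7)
The plan is to prove the first (odd) identity by constructing a sign-reversing involution on the complementary set $\{\sigma \in B_n : \Neg(\sigma)_o = S,\ \odes(|\sigma|) > 0\}$ that preserves both $\omaj$ and $\odes$ while flipping the parity of $\negg$; the even identity will follow by an entirely parallel argument. Given such $\sigma$, I let $i = i(\sigma)$ be the smallest odd index in $[n-1]$ with $|\sigma(i)| > |\sigma(i+1)|$, and define $\iota(\sigma)$ to be the element of $B_n$ obtained from $\sigma$ by replacing $\sigma(i+1)$ with $-\sigma(i+1)$. Because $i+1$ is even, $\Neg(\iota(\sigma))_o = \Neg(\sigma)_o = S$, while $\negg(\iota(\sigma)) = \negg(\sigma) \pm 1$, giving the required sign change. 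Moreover $|\iota(\sigma)| = |\sigma|$ implies $i(\iota(\sigma)) = i(\sigma)$, so $\iota$ is an involution on the indicated set.

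The heart of the argument is a sign-dominance observation: when $|\sigma(i)| > |\sigma(i+1)|$, whether $i$ belongs to $D(\sigma)$ depends only on the sign of $\sigma(i)$. Indeed, if $\sigma(i) > 0$ then $\sigma(i) = |\sigma(i)| > |\sigma(i+1)| \geq \pm \sigma(i+1)$, so $i$ is a descent for either choice of sign on $\sigma(i+1)$; the case $\sigma(i) < 0$ is analogous and shows $i$ is never a descent. Hence flipping the sign at position $i+1$ does not alter the status of $i \in D(\sigma)$. For every other odd $j \in [n-1]$, both $\sigma(j)$ and $\sigma(j+1)$ are untouched by $\iota$ (for $j < i$ because $j+1 \leq i-1$, for $j \geq i+2$ automatically), so descent at $j$ is unchanged. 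Therefore $D(\iota(\sigma))_o = D(\sigma)_o$, giving invariance of $\omaj$ and $\odes$, and pairing $\sigma$ with $\iota(\sigma)$ in the left-hand sum cancels all contributions with $\odes(|\sigma|) > 0$.

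For the even identity, I would repeat the construction taking $i$ to be the smallest \emph{even} index in $[n-1]$ with $|\sigma(i)| > |\sigma(i+1)|$ (such $i$ automatically satisfies $i \geq 2$, since position $0$ is never a descent of $|\sigma|$) and flipping the sign at the odd position $i+1$: this preserves $\Neg(\sigma)_e = S$, and the same sign-dominance observation yields $D(\iota(\sigma))_e = D(\sigma)_e$. I expect the only real obstacle to be the sign-dominance lemma and the care needed to verify that no other odd (respectively even) descent is disturbed by the flip; once those are in place, the involution mechanics and cancellation are immediate, and the claimed equalities follow.
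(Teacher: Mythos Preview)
Your proof is correct and follows essentially the same approach as the paper's: both define the involution by flipping the sign of $\sigma(i+1)$ at an odd index $i$ where $|\sigma(i)|>|\sigma(i+1)|$, and both rely on the observation that this flip preserves $\Neg(\sigma)_o$, $\odes$, and $\omaj$ while changing the parity of $\negg$. Your version is more explicit (you specify the smallest such $i$ and spell out the sign-dominance argument), but the underlying idea is the same.
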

\begin{proof}
Let $\sigma\in B_n$ and $i\in [n-1]$ be such that $i\equiv 1\modue$ and $|\sigma(i)|>|\sigma(i+1)|$. Let $\tilde{\sigma}:= [\sigma(1),...,\sigma(i),-\sigma(i+1),\sigma(i+2),...,\sigma(n)]$. Then $\negg(\tilde{\sigma})\equiv \negg(\sigma)+1 \modue$,
$\oneg(\tilde{\sigma})=\oneg(\sigma)$, $\odes(\tilde{\sigma})=\odes(\sigma)$ and hence $\omaj(\tilde{\sigma})=\omaj(\sigma)$.

The second formula is proved analogously.
\end{proof}

The next result completes the computation of the generating functions of the
statistics $\ofmaj$, $\odes$, and $\oneg$, and their even counterparts, twisted by the
one-dimensional characters of $B_n$. The result is the odd and even analogue, for $y=z=1$, of \cite[Theorem 6.2]{AGR}.
\begin{thm}
  Let $n\geqslant 2$. Then
$$\sum \limits_{\sigma \in B_n}(-1)^{\negg(\sigma)}x^{\ofmaj(\sigma)}y^{\odes(\sigma)}z^{\oneg(\sigma)}=\frac{n!}{2^{\left\lfloor \frac{n}{2} \right\rfloor}}
(1-xz)^{\left\lceil \frac{n}{2} \right\rceil} \prod\limits_{i=1}^{\left\lfloor \frac{n}{2} \right\rfloor}(1-yx^{2i}),$$
and
$$\sum \limits_{\sigma \in B_n}(-1)^{\negg(\sigma)}x^{\efmaj(\sigma)}y^{\edes(\sigma)}z^{\eneg(\sigma)}=\frac{n!}{2^{\left\lfloor \frac{n-1}{2} \right\rfloor}}
(1-xz)^{\left\lfloor \frac{n}{2} \right\rfloor} \prod\limits_{i=0}^{\left\lfloor \frac{n-1}{2} \right\rfloor}(1-yx^{2i}).
$$
\end{thm}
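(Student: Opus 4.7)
My plan is to use Proposition~\ref{prop neg} to reduce each sum to permutations $\sigma \in B_n$ satisfying $\odes(|\sigma|)=0$ (for the first identity) or $\edes(|\sigma|)=0$ (for the second). In the odd case, the admissible $\sigma$ are exactly those with $|\sigma(2j-1)|<|\sigma(2j)|$ for all $j$ with $2j\leq n$. Such a $\sigma$ is fully described by its word $|\sigma|$ of absolute values together with a sign at each position, and the number of admissible words $|\sigma|$ equals $n!/2^{\lfloor n/2\rfloor}$, one factor of $1/2$ per pair constraint.

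The main step is to track how the signs affect the odd descent structure. For $\sigma$ with $\odes(|\sigma|)=0$ and odd $i\in[n-1]$, the condition $|\sigma(i)|<|\sigma(i+1)|$ yields, via a short case analysis on the signs of $\sigma(i),\sigma(i+1)$, that $i\in D(\sigma)$ if and only if $\sigma(i+1)<0$. Consequently $\odes(\sigma)$ and $\omaj(\sigma)$ depend only on the signs at the even positions. Writing
\[
(-1)^{\negg(\sigma)}x^{\ofmaj(\sigma)}y^{\odes(\sigma)}z^{\oneg(\sigma)} = (-1)^{\negg(\sigma)}(xz)^{\oneg(\sigma)}\,y^{\odes(\sigma)}\,x^{2\omaj(\sigma)},
\]
the sum over signs at a fixed $|\sigma|$ factors as a product over positions: each of the $\lceil n/2\rceil$ odd positions contributes $1-xz$ (from $-1\cdot xz$ when negative), while each even position $2k\in[n]$ contributes $1-yx^{2k}$, the negative sign giving $-1$ from the character, $y$ from the new odd descent at $2k-1$, and $x^{2k}$ from the corresponding odd major index. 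Multiplying these factors and then by $n!/2^{\lfloor n/2\rfloor}$ yields the first formula.

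The second identity follows by the same route. The reduction to $\edes(|\sigma|)=0$ constrains the pairs $\{|\sigma(2)|,|\sigma(3)|\},\{|\sigma(4)|,|\sigma(5)|\},\dots$ and leaves position $1$ (and, if $n$ is even, also position $n$) free, yielding $n!/2^{\lfloor(n-1)/2\rfloor}$ admissible words. Signs at odd positions $2k-1$ now control the even descents at $i=2k-2$; the novelty is that position $j=1$ governs the descent at $i=0$, which contributes $y$ to $\edes$ but $0$ to $\emaj$, thereby producing the factor $1-y=1-yx^0$ and explaining why the product in the target formula starts at $i=0$. The main obstacle in either case is the sign bookkeeping: one must verify that every descent is correctly attributed to a single sign and that the index ranges line up for both parities of $n$, in particular confirming the special role of position $0$ in the even case.
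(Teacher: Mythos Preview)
Your proposal is correct and follows essentially the same approach as the paper: both use Proposition~\ref{prop neg} to reduce to $\sigma$ with $\odes(|\sigma|)=0$ (resp.\ $\edes(|\sigma|)=0$), observe that for such $\sigma$ the odd (resp.\ even) descents are governed solely by the signs at even (resp.\ odd) positions, and then factor the remaining sum over sign choices. The paper phrases the final step as a double sum over $S\subseteq[n]_o$ and $T\subseteq[n]_e$, while you phrase it as a per-position product, but these are the same computation; your treatment of the $i=0$ descent in the even case is exactly the detail the paper leaves implicit when it says the second identity ``follows analogously''.
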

\begin{proof}
By Proposition \ref{prop neg} we have that

  \begin{eqnarray*}
  \sum \limits_{\sigma \in B_n}&(-1)^{\negg(\sigma)}&x^{\omaj(\sigma)}y^{\odes(\sigma)}z^{\oneg(\sigma)}=\\
    &=&
\sum\limits_{S\subseteq [n]_o} z^{|S|}\sum \limits_{\substack{\{\sigma \in B_n:\\\Neg(\sigma)_o=S\}}}(-1)^{\negg(\sigma)}x^{\omaj(\sigma)}y^{\odes(\sigma)} \\
     &=& \sum\limits_{S\subseteq [n]_o} z^{|S|}\sum \limits_{\substack{\{\sigma \in B_n:\\ \odes(|\sigma|)=0 \\ \Neg(\sigma)_o=S\}}}(-1)^{\negg(\sigma)}x^{\omaj(\sigma)}y^{\odes(\sigma)}.
\end{eqnarray*}
Now notice that, if $\sigma \in B_n$ is such that $\odes(|\sigma|)=0$ and $i \equiv 1 \modue$,
then $\sigma(i)>\sigma(i+1)$ if and only if $\sigma(i+1)<0$, so $D(\sigma)_o+1=\Neg(\sigma)_e$.
Therefore the previous sum equals
\begin{eqnarray*}
& & \sum\limits_{S\subseteq [n]_o} z^{|S|}\sum\limits_{T \subseteq [n]_e}  \sum \limits_{\substack{\{\sigma \in B_n:\\\Neg(\sigma)=S \uplus T\\ \odes(|\sigma|)=0\}}}(-1)^{|S|+|T|}x^{\frac{1}{2}\sum\limits_{t\in T}t}y^{|T|} \\
&=&|\{\sigma \in B_n: \odes(|\sigma|)=0, \Neg(\sigma)=S \cup T\}| \sum\limits_{S\subseteq [n]_o} (-z)^{|S|}\sum\limits_{T \subseteq [n]_e} (-y)^{|T|}x^{\frac{1}{2}\sum\limits_{t\in T}t}  \\
&=& \frac{n!}{2^{\left\lfloor \frac{n}{2} \right\rfloor}}(1-z)^{\left\lceil \frac{n}{2} \right\rceil} \prod\limits_{i=1}^{\left\lfloor \frac{n}{2} \right\rfloor}(1-yx^i).
 \end{eqnarray*}
The second equality follows analogously, using the reduction of Proposition \ref{prop neg}.

\end{proof}

The joint distribution of $\ofmaj$ and $\efmaj$ twisted by ``negative'' character does not seem to factor.

\section{Final comments}

It is clear that the most desirable property that one would like an ``odd major index'' to possess
is that it is equidistributed with the odd length.
It is easy to see that, if we require, as seems reasonable, such an odd major index to be
a function of the descent set, such an odd major index does not exist in general.
For example, one has that
\begin{eqnarray*}
\sum_{\pi \in S_5} \prod_{i \in D(\pi)} x_i & = & 1+4 x_4 + 9 x_3 + 6 x_3 x_4 +9x_2+16 x_2 x_4+11x_2 x_3
+4 x_2 x_3 x_4 + 4 x_1 \\ & & +11 x_1 x_4+ 16 x_1 x_3
 + 9 x_1 x_3 x_4 +6 x_1 x_2 +9 x_1 x_2 x_4 +
4 x_1 x_2 x_3 + x_1 x_2 x_3 x_4
\end{eqnarray*}
and one can check that there are no $j_1,j_2,j_3,j_4 \in \N$ such that
\[
\sum_{\pi \in S_5} \prod_{i \in D(\pi)} x^{j_i} = 1+12x+23x^2+48 x^3 + 23 x^4+ 12 x^5 +x^6
 =  \sum_{\pi \in S_5} x^{L(\pi)}
\]
where $L(\pi) := |\{ (i,j) \in [n]^2 : i<j, \pi(i) > \pi(j), i \not \equiv j \pmod{2}  \} |$
is the odd length of the symmetric group.

Similar computations show that no ``odd major index'' that depends only on the descent and negative sets exists in the hyperoctahedral groups
that is equidistributed with the odd length, where the odd length is the one defined in \cite{SV1}
and \cite{SV2}, and further studied in \cite{BC}, \cite{BC3}, and \cite{Lan}, namely
$$L_B(\sigma)=\frac{1}{2}|\{(i,j)\in [\pm n]^2 : i<j,\,\sigma(i)>\sigma(j),\,i\not\equiv j \modue\}|,$$ where $\sigma \in B_n$ and $\sigma(0):=0$.

\section{Acknowledgements}

The first author would like to thank Sylvie Corteel for useful and interesting conversations that led to the proof of Proposition \ref{unimod over}. This material is partly based upon work supported by the Swedish Research Council under
grant no. 2016-06596 while the first author was in residence at Institut Mittag-Leffler in Djursholm, Sweden during Spring 2020.

The first author is partially supported by the MIUR Excellence Department Project
CUP E83C18000100006.


\begin{thebibliography}{xx}

\bibitem{AR}
R. M. Adin and Y. Roichman,
{\em The flag major index and group actions on polynomial rings},
Europ.\ J.\ Combin.~22, 431--446 (2001).


\bibitem{AGR}
R. Adin, I. Gessel, and Y. Roichman, {\em Signed Mahonians},
Journal of Combinatorial Theory Series A, 109, 25-43 (2005).


\bibitem{BereleRegev}
A. Berele and A. Regev, {\em Hook Young diagrams with applications to combinatorics and to representations of Lie superalgebras}, Advances in Mathematics, 64, 118-175 (1987).

\bibitem{BiagAAM}
R. Biagioli, {\em Major and descent statistics for the even-signed permutation group},
Advances in Applied Mathematics, 31(1), 163-179 (2003).

\bibitem{Biagioli-2006}
R. Biagioli, {\em Signed Mahonian polynomials for classical Weyl groups},
European Journal of Combinatorics 27.2, 207-217  (2006).


\bibitem{Biagioli-Caselli}
R. Biagioli and F. Caselli, {\em Invariant algebras and major indices for classical Weyl groups},
Proceedings of the LMS 88.3, 603-631 (2004).

\bibitem{BB}
A. Bj\"{o}rner, F. Brenti, {\em Combinatorics of Coxeter Groups},
Graduate Texts in Mathematics, {\bf 231}, Springer-Verlag, New York, 2005.


\bibitem{Brenti-superSchur}
F.\ Brenti, {\em Determinants of Super-Schur Functions, Lattice Paths, and Dotted Plane Partitions}, Advances in Mathematics, 98, 27- 64 (1993).

\bibitem{BC}
F.\ Brenti, A.\ Carnevale, {\em Proof of a conjecture of Klopsch-Voll
on Weyl groups of type $A$},   Trans. Amer. Math. Soc. {\bf 369}, 7531--7547 (2017).

\bibitem{BC2}
F.\ Brenti, A.\ Carnevale, {\em Odd length for even hyperoctahedral
groups and signed generating functions},  Discrete Math.,  {\bf 340}, 2822--2833 (2017).

\bibitem{BC3}
F.\ Brenti, A. Carnevale, {\em Odd length in Weyl groups}, Algebraic Comb., 2, 1125-1147 (2019).



\bibitem{Carlitz}
L. Carlitz, {\em A combinatorial property of q-Eulerian numbers},  Amer. Math. Monthly 82, 51–54 (1975).

\bibitem{CF94}
R. J. Clarke and D. Foata, {\em Eulerian calculus. I.
Univariable statistics},
Europ. J. Combinatorics, {\bf 15} 345--362, (1994).

\bibitem{CF95a}
R. J. Clarke and D. Foata, {\em Eulerian calculus. II.
An extension of Han's fundamental transformation},
Europ. J. Combinatorics, {\bf 16}, 221--252 (1995).

\bibitem{CF95b}
R. J. Clarke and D. Foata, {\em Eulerian calculus. III.
The ubiquitous Cauchy formula},
Europ. J. Combinatorics, {\bf 16} 329--355, (1995).

\bibitem{Corteel Lovejoy}
S. Corteel and J. Lovejoy, {\em Overpartitions},
Transactions of the American Mathematical Society 356.4, 1623-1635 (2004).

\bibitem{Dousse Byungchan}
J. Dousse and K. Byungchan, {\em An overpartition analogue of q-binomial coefficients, II: Combinatorial proofs and (q, t)-log concavity},
J. of Comb. Th., Series A 158, 228-253 (2018).


\bibitem{EFPT}
S. P. Eu, T. S. Fu, Y. J. Pan and C. T. Ting {\em Two refined major-balance identities on
$321$-avoiding involutions},
European Journal of Combinatorics 49,  250-264 (2015).

\bibitem{Hum}
J. E. Humphreys, {\em Reflection Groups and Coxeter Groups},
Cambridge Studies in Advanced Mathematics, no.29,
Cambridge Univ. Press, Cambridge,  1990.

\bibitem{KV}
B. Klopsch, C. Voll, {\em Igusa-type functions associated to finite
  formed spaces and their functional equations},
Trans. Amer. Math. Soc.,  {\bf 361} no. 8, 4405--4436, (2009).

\bibitem{Lan}
A.\ Landesman, {\em Proof of Stasinski and Voll's hyperoctahedral
group conjecture},  Australas. J. Combin., {\bf 71} 196--240, (2018).

\bibitem{Macdonald}
I. G. Macdonald, {\em Symmetric functions and Hall polynomials},
Oxford university press, 1998.

\bibitem{Rei93a}
V. Reiner, {\em Signed permutation statistics},
Europ. J. Combinatorics, {\bf 14} 553--567, (1993).

\bibitem{Rei93b}
V. Reiner, {\em Signed permutation statistics and cycle type},
Europ. J. Combinatorics, {\bf 14} 569--579, (1993).

\bibitem{Reiner}
V. Reiner, {\em Descents and one-dimensional characters for classical Weyl groups},
Discrete mathematics 140.1-3, 129-140 (1995).

\bibitem{Sentinelli}
P. Sentinelli, {\em Parabolically induced functions and equidistributed pairs},
Bolet\'in de la Sociedad Matem\'atica Mexicana, 25 (3), 461-479 (2019).



\bibitem{Sta-Liesuper}
R. Stanley, {\em Unimodality and Lie Superalgebras}, Studies in Applied Mathematics 72, 263-281 (1985).

\bibitem{Sta-Uni}
R. Stanley, {\em Log-concave and unimodal sequences in Algebra, Combinatorics and Geometry},  Annals of the New York Academy
of Sciences, {\bf 576} 500-534, (1989).


\bibitem{SV1}
A. Stasinski, C. Voll, {\em A new statistic on the hyperoctahedral
  groups},  Electronic J. Combin., {\bf 20}, no. 3, Paper 50, 23 pp. (2013).

\bibitem{SV2}
A. Stasinski, C. Voll, {\em  Representation zeta functions of
  nilpotent groups  and generating functions for Weyl groups of type B}, Amer. J. Math., {\bf 136} (2) 501--550, (2014).

\bibitem{Ste94}
E. Steingrimsson, {\em Permutation statistics of indexed permutations},
Europ. J. Combinatorics, {\bf 15}, 187--205, (1994).

\bibitem{Ste}
 J. Stembridge,
{\em Sign-twisted Poincar\'e series and odd inversions in Weyl groups},
Algebraic Comb., {\bf 2}, no. 4, 621--644 (2019).

\bibitem{Wachs}
M. L. Wachs, {\em An involution for signed Eulerian numbers},
Discrete mathematics 99.1-3, 59-62 (1992).



\end{thebibliography}
\end{document}